\newtheorem{theorem}{Theorem}[section]
\newtheorem{lemma}[theorem]{Lemma}
\newtheorem{corollary}[theorem]{Corollary}
\newtheorem{proposition}[theorem]{Proposition}
\newtheorem{remarks}[theorem]{Remarks}
\newtheorem{remark}[theorem]{Remark}
\newtheorem{example}[theorem]{Example}
\newtheorem{examples}[theorem]{Examples}
\newtheorem{definition}[theorem]{Definition}
\newtheorem{question}[theorem]{Question}
\newtheorem{convention}[theorem]{Convention}
\def\sqr#1#2{{\vcenter{\hrule height.#2pt
\hbox{\vrule width.#2pt height#1pt \kern#1pt \vrule width.#2pt}
\hrule height.#2pt}}}
\def\qed{\hspace*{\fill} $\square$}
\def\Mc{\mathscr{M}}
\def\Nc{\mathscr{N}}
\def\Cc{\mathscr{C}}
\def\Ic{\mathscr{I}}
\def\Kc{\mathscr{K}}
\begin{document}

\title[Generalized Ulrich modules]{On the theory of generalized Ulrich modules}

\author[Miranda-Neto, \, Queiroz, \, Souza]{Cleto B.~Miranda-Neto, \, Douglas S.~Queiroz, \, Thyago S.~Souza}

\address{Departamento de Matem\'atica, Universidade Federal da
Para\'iba, 58051-900, Jo\~ao Pessoa, PB, Brazil.}
\email{cleto@mat.ufpb.br}

\address{Departamento de Matem\'atica, Universidade Federal da
Para\'iba, 58051-900, Jo\~ao Pessoa, PB, Brazil.}
\email{douglassqueiroz0@gmail.com}

\address{Unidade Acad\^emica de Matem\'atica, Universidade Federal de
Campina Grande, 58429-970, Campina Grande, PB, Brazil.}
\email{thyago@mat.ufcg.edu.br}

\subjclass[2020]{Primary: 13C14, 13C05, 13H10; Secondary: 13A30, 13D07, 13C13, 13C40} \keywords{Ulrich module, maximal Cohen-Macaulay module, horizontal linkage, module of minimal multiplicity, blowup module.}

\begin{abstract}
In this paper we further develop the theory of generalized Ulrich modules introduced in 2014 by Goto {\it et al.} Our main goal is to address the problem of when the operations of taking the Hom functor and horizontal linkage preserve the Ulrich property. One of the applications is a new characterization of quadratic
hypersurface rings. Moreover, in the Gorenstein case, we deduce that applying linkage to sufficiently high syzygy modules of Ulrich ideals yields Ulrich modules. Finally, we explore connections to the theory of modules with minimal multiplicity, and as a byproduct we determine the Chern number of an Ulrich module as well as the Castelnuovo-Mumford regularity of its Rees module.

\end{abstract}

\maketitle

\vspace{-0.1in}

\centerline{\it Dedicated with gratitude to the memory of Professor Shiro Goto.}

\medskip

\section{Introduction}

In this work we are concerned with the theory of generalized Ulrich modules (over Cohen-Macaulay local rings) by Goto et al.\,\cite{goto2014}, which widely extended the classical study of maximally generated maximal Cohen-Macaulay modules -- or {\it Ulrich modules}, as coined in \cite{H-K} -- initiated in the 80's by B. Ulrich \cite{ulrich84}. The term {\it generalized} refers to the fact that Ulrich modules are taken relatively to a zero-dimensional ideal which is not necessarily the maximal ideal, the latter situation corresponding to the classical theory; despite the apparent naivety of the idea, this passage adds considerable depth to the theory and enlarges its horizon of applications. 

Motivated by the remarkable advances in \cite{goto2014}, our purpose here is to present further progress which includes generalizations of several known results on Ulrich modules (e.g., from \cite{goto2014}, \cite{kobayashi2018}, \cite{Ooishi}, \cite{ulrich2003}) and connections to some other important classes such as that of modules with minimal multiplicity; for the latter task, we employ suitable numerical invariants such as the Castelnuovo-Mumford regularity of blowup modules.

It is worth recalling that the original notion of an Ulrich module (together with the classical existence problem; see, however, \cite{Yhee} for Yhee's construction of local domains which do not admit Ulrich modules or (weakly) lim Ulrich sequences) has been extensively explored since its inception, in both commutative algebra and algebraic geometry; see, for instance,
\cite{BHU}, \cite{hartshorne2011}, \cite{ESW}, \cite{G-T-T0}, \cite{Ha-Hu}, \cite{H-K}, \cite{ulrich91}, \cite{KleMir}, \cite{Ma}, \cite{yoshida2017}, and their references on the theme. Echoing and complementing \cite[Second paragraph of Introduction]{Yhee}, the applications include criteria for the Gorenstein property (see \cite{Ha-Hu}, \cite{ulrich84}), the investigation of maximal Cohen-Macaulay modules over Gorenstein local rings and factoriality of certain rings (see \cite{H-K}),
the development of the theory of almost Gorenstein rings \cite{G-T-T0}, strategies to tackle certain resistant conjectures in multiplicity theory (e.g., Lech's conjecture; see \cite{Ma}, where Ma solved this conjecture in the graded case by introducing and using the notion of (weakly)
lim Ulrich sequences, which gives yet another way to generalize the classical Ulrich property), and methods for constructing resultants and Chow forms of projective algebraic varieties (see \cite{ESW}, where the concepts of Ulrich sheaf and Ulrich bundle were introduced).

In essence, the general approach suggested in \cite{goto2014} extended the definition of an Ulrich
module $M$ over a (commutative, Noetherian) Cohen-Macaulay local ring $(R, \Mc)$ with infinite residue field to a relative setting that takes into account an $\Mc$-primary ideal $\Ic$ containing a parameter ideal as a reduction, so that the case $\Ic=\Mc$ retrieves the standard theory. For instance, it is now required the condition of the freeness of $M/\Ic M$ over $R/\Ic$, which was hidden in the classical setting as $M/\Mc M$ is simply a vector space. Following this line of investigation, other works have appeared in the literature as for example \cite{G-I-K}, \cite{goto2016}, \cite{G-T-T}, \cite{Num}.

Now let us briefly comment on our main results, section by section. Preliminary definitions and some known auxiliary results, which are used throughout the paper, are given in
Section \ref{aux}.

The main goal of Section \ref{sec3} is to investigate the Ulrich property under the Hom functor. In this regard, our main
result is Theorem \ref{teo.teo5.1.goto.generalizado}, which can be viewed as a generalization of \cite[Theorem 5.1]{goto2014} and of
\cite[Proposition 4.1]{kobayashi2018}. Moreover, Corollary \ref{cor.teo5.2.goto.generalizado} generalizes \cite[Corollary 5.2]{goto2014}, and Corollary \ref{far} is a far-reaching extension of \cite[Lemma 2.2]{BHU}. We also study a connection to the theory of semidualizing modules (see Corollary \ref{cor3.teo5.1.goto.generalizado}) and use it to derive a new characterization of when $R$ is regular (see Corollary \ref{regularity}). In addition, in the last subsection, we provide some freeness criteria for $M/\Ic M$ over the Artinian local ring $R/\Ic$, which is one of the requirements for Ulrichness with respect to $\Ic$.

In Section \ref{sec4} we are essentially interested in the behavior of the Ulrich property under the operation of horizontal linkage over Gorenstein local rings. The main result here is Theorem \ref{teo.link.ulrich} (see also Corollary \ref{prop.I.Ulrich.preservada.por.LH}), from which we derive a curious characterization of quadratic hypersurface local rings (see Corollary \ref{Hyper-charact}). In Corollary \ref{lambda-syz}, we record the special case of sufficiently high syzygy modules of a non-parameter Ulrich ideal, in case $R$ is Gorenstein.

In Section \ref{sec5} we consider the class of modules with minimal multiplicity (in the sense of \cite{Puthenpurakal2}) and then connect this concept to the Ulrich property, both taken with respect to $\Ic$. The basic relation is that Ulrich $R$-modules have minimal multiplicity (see Proposition \ref{UlrichMultMin}), and as a consequence we use the Chern number -- i.e., the first Hilbert coefficient -- as an ingredient to obtain a characterization of Ulrichness (see Corollary \ref{UlrHilb}) which generalizes \cite[Corollary 1.3(1)]{Ooishi}. Under this perspective, modules with trivial Chern number are provided in  Corollary \ref{Chern-of-syz}, and considerations about the structure of the Hilbert-Samuel polynomial of an Ulrich module are given in Remarks \ref{Hilb-S}. Our main technical result in this section is Theorem \ref{regMulMin}, which curiously does not contain Ulrich-like properties in its statement and, more precisely, characterizes modules with minimal multiplicity as follows:

\medskip

\noindent{\bf Theorem 5.14} {\it Let $(R, \Mc)$ be a Noetherian local ring with infinite residue field, $M$ a Cohen-Macaulay $R$-module of dimension $t > 0$ and $I$ an $\Mc$-primary ideal of $R$. Let $J = (z_{1}, \ldots,z_{t})$ be a minimal $M$-reduction of $I$. The following assertions are equivalent:
\begin{itemize}
\item[(i)] 	$M$ has minimal multiplicity with respect to $I$;
\item[(ii)] $\mathrm{reg}\,{\mathcal R}(I, M) = \mathrm{reg}\,{\mathcal G}(I,M) = {\rm r}_{J}(I, M)  \leq 1$;
\item[(iii)] ${\rm r}_{J}(I, M)  \leq 1$.
\end{itemize}}

\medskip

Here, ${\rm reg}(-)$ denotes (Castelnuovo-Mumford) regularity, and ${\mathcal R}(I, M)$ (resp. ${\mathcal G}(I,M)$) stands for the Rees module (resp. the associated graded module) of $I$ relative to $M$. Also, ${\rm r}_{J}(I, M)$ is the reduction number of $I$ with respect to $J$ relative to $M$. We emphasize that Theorem \ref{regMulMin} answers affirmatively the module-theoretic analogue of Sally's question (about independence of reduction numbers; see \cite{Sally}) for the class of modules with minimal multiplicity. Additionally, from this theorem we derive Corollary \ref{corUlr}, which determines the regularity of the Rees and the associated graded modules of $\Ic$ relative to an Ulrich module (this result partially generalizes \cite[Proposition 1.1]{Ooishi}), and also Corollary \ref{corUlr2}, where we deal once again with high syzygy modules of Ulrich ideals.

Finally, Section \ref{detailed} provides a detailed example to illustrate some of our main corollaries.

\section{Conventions, preliminaries, and some auxiliary results}\label{aux}

Throughout this paper, all rings are assumed to be commutative and Noetherian with 1, and by {\it finite} module we mean a finitely generated module. 

In this section, we recall some of the basic notions and tools that will play an important role throughout the paper. Other auxiliary notions will be introduced as they become necessary.

\subsection{Ulrich ideals and modules}\label{ulrich-subsect}
Let $(R, \Mc)$ be a local ring, $M$ a finite $R$-module, and $I\neq R$ an ideal of definition of $M$, i.e., ${\Mc}^nM\subset IM$ for some $n>0$. Let us establish some notations. We denote by $\nu(M)$ and
$\textrm{e}_I^0 (M)$, respectively, the minimal number of generators of $M$ and the multiplicity of $M$ with respect to $I$. When $I
= \Mc$, we simply write $\textrm{e}(M)$ in place of $\textrm{e}_{\Mc}^0(M)$.

\begin{definition}\label{def.ulrich.module} \rm Let $(R, \Mc)$ be a local ring. A finite $R$-module $M$ is {\it Cohen-Macaulay} (resp. {\it maximal
Cohen-Macaulay}) if ${\rm depth}_RM={\rm dim}\,M$ (resp. ${\rm depth}_RM={\rm dim}\,R$). Note the zero module is not maximal Cohen-Macaulay (as its depth is set to be $+\infty$). Moreover, $M$ is called \emph{Ulrich} if $M$ is a maximal Cohen-Macaulay $R$-module satisfying $\nu(M)=\textrm{e}(M)$.
\end{definition}

For instance, if $(R, \Mc)$ is a 1-dimensional Cohen-Macaulay local ring, then the power ${\Mc}^{{\rm e}(R)-1}$ is an Ulrich module. Several other classes of examples can be found, e.g., in \cite{BHU}.

Ulrich modules are also dubbed \emph{maximally generated
maximal Cohen-Macaulay modules}. This is due to the fact that there is an inequality
$\nu(M) \leq \textrm{e}(M)$ whenever the local ring $R$ is Cohen-Macaulay and $M$ is maximal
Cohen-Macaulay (see \cite[Proposition 1.1]{BHU}).

\begin{convention}\label{convention}\rm Henceforth, in the entire paper, we adopt the following convention and notations. Whenever $(R, \Mc)$ is a $d$-dimensional Cohen-Macaulay local ring, we will let  $\mathscr{I}$ (to be distinguished from the notation $I$) stand for an $\Mc$-primary ideal that contains a parameter ideal $${Q} = ({\bf x})=(x_1, \ldots, x_d)$$ as a reduction, i.e., $Q\Ic^r=\Ic^{r+1}$ for some integer $r\geq 0$. As is well-known, any $\Mc$-primary ideal of $R$ has this property provided that the residue class field $R/\Mc$ is infinite, or that $R$ is analytically irreducible with $d = 1$.
\end{convention}

\begin{definition}\label{def.Gor.ideal}\rm
Let $R$ be a Cohen-Macaulay local ring. We say that the ideal $\mathscr{I}$ is \emph{Gorenstein} if the quotient ring $R/\mathscr{I}$ is Gorenstein.
\end{definition}

Next, we recall the general notions of Ulrich ideal and Ulrich module as introduced in \cite{goto2014} (where in addition several explicit examples are given). As will be made clear, the latter (Definition \ref{def.I.urich.module} below) generalizes Definition \ref{def.ulrich.module}.

\begin{definition}\label{def.urich.ideal}\rm (\cite{goto2014})
Let $R$ be a Cohen-Macaulay local ring. 
We say that the ideal $\mathscr{I}$
is \emph{Ulrich} if $\mathscr{I}^2 =
{Q}\mathscr{I}$ (i.e., the reduction number of $\mathscr{I}$ with respect to $Q$ is at most 1) and $\mathscr{I}/\mathscr{I}^2$ is a free $R/\mathscr{I}$-module.
\end{definition}

\begin{examples}\rm \label{exemplo} (i) (\cite[Proposition 3.9]{Ku}) Let $S=K[[x, y, z]]$ be a formal power series ring over an infinite field $K$, and fix any regular sequence $\{f, g, h\}\subset (x, y, z)$. Then, $R=S/(f^2-gh, g^2-hf, h^2-fg)$ is a 1-dimensional Cohen-Macaulay local ring and $\Ic=(f, g, h)R$ is an Ulrich ideal.

\medskip

(ii) (\cite[Example 2.7(2)]{goto2014}) One way to produce examples in arbitrary positive dimension is as follows. Given a field $K$ and integers $d, s\geq 1$, consider the $d$-dimensional local hypersurface ring $R=K[[z_1, \ldots, z_{d+1}]]/(z_1^2+\ldots + z_d^2+z_{d+1}^{2s})$, where $z_1, \ldots, z_{d+1}$ are formal indeterminates over $k$. Then, the ideal $$\Ic=(z_1, \ldots, z_d, z_{d+1}^{s})R$$ is Ulrich and contains the parameter ideal $Q=(z_1, \ldots, z_d)R$ as a reduction.

\end{examples}

\begin{remark}\label{obs1.def.urich.ideal}\rm
In a Gorenstein local ring, every Ulrich ideal is Gorenstein (see \cite[Corollary 2.6]{goto2014}).
\end{remark}

\begin{definition}\label{def.I.urich.module}\rm (\cite{goto2014})
Let $R$ be a Cohen-Macaulay local ring and let $M$ be a finite
$R$-module. We say that $M$ is \emph{Ulrich
with respect to} $\mathscr{I}$ if the following conditions hold:
\begin{itemize}
\item[(i)] $M$ is a maximal Cohen-Macaulay $R$-module;
\item[(ii)] $\mathscr{I}M = {Q}M$;
\item[(iii)] $M/\mathscr{I}M$ is a free $R/\mathscr{I}$-module.
\end{itemize}
\end{definition}

\begin{remarks}\label{obs.I.urich.module}\rm (i) Let us recall the discussion in \cite[Paragraph after Definition 1.2]{goto2014}. Denote length of $R$-modules by $\ell_R(-)$.
If $R$ is a Cohen-Macaulay local ring with infinite residue field and $M$ is a
maximal Cohen-Macaulay $R$-module, then
$$\textrm{e}_{\mathscr{I}}^0 (M) = \textrm{e}_{{Q}}^0 (M) = \ell_R(M/{Q}M) \geq \ell_R(M/\mathscr{I}M),$$
so that condition (ii) of Definition \ref{def.I.urich.module} is equivalent to saying that the equality $\textrm{e}_{\mathscr{I}}^0 (M) =
\ell_R(M/\mathscr{I}M)$ takes place. In particular, if $\mathscr{I} = \Mc$, condition (ii) is the same as  $\textrm{e}(M) = \nu(M)$. Therefore, $M$ is an
Ulrich module with respect to $\Mc$ if and only if $M$
is an Ulrich module in the sense of Definition
\ref{def.ulrich.module}.

\medskip

(ii) Clearly, if $d=1$ and $\Ic$ is an Ulrich ideal of $R$, then $\Ic$ is an Ulrich $R$-module with respect to $\Ic$.

\medskip

(iii) Let us recall the following more general recipe to obtain Ulrich modules from Ulrich ideals (in the setting of Convention \ref{convention}). If $\Ic$ is an Ulrich ideal of $R$ which is not a parameter ideal, then for any $i\geq d$ the $i$-th syzygy module (see Subsection \ref{linkage-subsect} below) of $R/\Ic$ is an Ulrich $R$-module with respect to $\Ic$, and conversely (we refer to \cite[Theorem 4.1]{goto2014}). This is a very helpful property and will be explored in some of our results and examples.

\end{remarks}

\subsection{Linkage}\label{linkage-subsect}

The concepts recalled in this subsection can be described in the general context of semiperfect rings, but in this paper we focus on the special case of (finite modules over) a local ring $R$, since this is the setup where our results will be proved.

Given a finite $R$-module 
$M$, we write $M^*={\rm Hom}_R(M, R)$.
The ({\it Auslander}) \emph{transpose} $\textrm{Tr}\,M$ of 
$M$ is defined as the cokernel of the dual $\partial_1^*
= \textrm{Hom}_{R} (\partial_1, R)$ of the first differential map
$\partial_1$ in a minimal free resolution of $M$ over $R$. Hence there is
an exact sequence $$0 \longrightarrow M^{*} \longrightarrow
F_0^{*} \stackrel{\partial_1^{*}}{\longrightarrow} F_1^{*} \longrightarrow
{\rm Tr}\,M \longrightarrow 0$$ for suitable finite free $R$-modules $F_0, F_1$. The ({\it first}) \emph{syzygy} module $\Omega^1M=\Omega M$
of $M$ is the image of $\partial_1$, hence a submodule of $F_0$. We recursively put $\Omega^{k}M=\Omega (\Omega^{k-1}M)$, the $k$-th syzygy module of $M$, for any $k\geq 2$.

Note that the modules $\textrm{Tr}\,M$ and
$\Omega M$ are uniquely determined up to isomorphism, since so is
a minimal free resolution of $M$. By \cite[Proposition
6.3]{auslander65}, we have an exact sequence
\begin{equation}\label{eq.seq.exact}
0 \longrightarrow \textrm{Ext}_{R}^1 (\textrm{Tr}\,M, R) \longrightarrow M
\stackrel{e_M}{\longrightarrow} M^{**}
             \longrightarrow \textrm{Ext}_{R}^2(\textrm{Tr}\,M, R) \longrightarrow 0,
\end{equation}
where $e_M$ is the evaluation map.

In \cite{link2004}, Martsinkovsky and Strooker generalized the classical theory of linkage for ideals to the
context of modules by means of the operator $\lambda  = \Omega \textrm{Tr}$, i.e., a finite $R$-module $M$ is sent to the composite $\Omega \textrm{Tr}\,M$ defined from a minimal free presentation of $M$.

\begin{definition}\rm (\cite{link2004})  Two finite $R$-modules $M$ and $N$ are said to be \emph{horizontally
linked} if $M \cong \lambda N$ and $N \cong \lambda M$. In the case where $M$ and $\lambda M$ are horizontally linked, i.e., $M \cong \lambda^2 M$,
we simply say that the module $M$ is horizontally linked.

\end{definition}

Also we recall that a \emph{stable} module is a finite module with no non-zero free direct
summand. A finite $R$-module $M$ is called a \emph{syzygy module} if it
is embedded in a finite free $R$-module, i.e., if $M\cong \Omega N$ for some finite $R$-module $N$. Here is a well-known characterization of
horizontally linked modules.

\begin{lemma}\label{teo2.link} {\rm (\cite[Theorem 2 and Corollary 6]{link2004})}
A finite $R$-module $M$ is horizontally linked if and only if $M$ is
stable and $\textrm{\emph{Ext}}_{R}^1(\textrm{\emph{Tr}}\,M, R) =
0$, if and only if $M$ is a stable  syzygy module.
\end{lemma}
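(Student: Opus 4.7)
The plan is to prove both biconditionals by exploiting the Auslander--Bridger exact sequence (\ref{eq.seq.exact}) together with a concrete description of $\lambda^2 M = \Omega \textrm{Tr}\,\Omega\textrm{Tr}\,M$ obtained from minimal free presentations.

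First I would dispose of the equivalence between the second and third conditions, i.e.\ ``$M$ stable and $\textrm{Ext}^1_R(\textrm{Tr}\,M, R) = 0$'' $\Longleftrightarrow$ ``$M$ is a stable syzygy module''. By (\ref{eq.seq.exact}), $\textrm{Ext}^1_R(\textrm{Tr}\,M, R) = \ker e_M$, so the vanishing of this Ext is precisely the injectivity of the evaluation map $e_M \colon M \to M^{**}$, i.e.\ the torsionless property. One direction is immediate: if $M \hookrightarrow F$ with $F$ free, then dualizing twice factors the inclusion through $M^{**}$, forcing $e_M$ to be injective. Conversely, any free presentation $G \twoheadrightarrow M^*$ dualizes to an embedding $M^{**} \hookrightarrow G^*$, so injectivity of $e_M$ gives an embedding of $M$ into a finite free module, proving $M$ is a syzygy module.

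Next I would treat the horizontal linkage equivalence. Since $\lambda M = \Omega \textrm{Tr}\,M$ is by construction a submodule of a free module (and, when minimal presentations are used, has no free direct summand), the module $\lambda^2 M$ is automatically stable; hence any $M \cong \lambda^2 M$ must be stable. The core step is to produce, for every stable $M$, a natural exact sequence
\begin{equation*}
0 \longrightarrow \textrm{Ext}^1_R(\textrm{Tr}\,M, R) \longrightarrow M \longrightarrow \lambda^2 M \longrightarrow 0.
\end{equation*}
To build it, start from a minimal free presentation $F_1 \xrightarrow{\partial_1} F_0 \to M \to 0$, identify $\lambda M$ with the image of $\partial_1^{*} \colon F_0^{*} \to F_1^{*}$, and then prolong this to a minimal free presentation of $\lambda M$. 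Computing $\textrm{Tr}\,\lambda M$ and then $\Omega\textrm{Tr}\,\lambda M$ from that presentation, a diagram chase yields a natural comparison morphism $M \to \lambda^2 M$ whose kernel is precisely $\ker e_M = \textrm{Ext}^1_R(\textrm{Tr}\,M, R)$, and whose cokernel can be shown to be a free summand of $M$; stability of $M$ then forces this cokernel to vanish.

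With this sequence in hand, $M \cong \lambda^2 M$ if and only if $M$ is stable and $\textrm{Ext}^1_R(\textrm{Tr}\,M, R) = 0$, which by the first step is equivalent to $M$ being a stable syzygy module, completing both equivalences. The main obstacle I anticipate is the bookkeeping in the construction of the displayed exact sequence: one must track free summands that appear when passing from $\textrm{Tr}$ computed via a given presentation to $\textrm{Tr}$ computed via a minimal one, and use stability decisively to rule them out; this is what makes the cokernel vanish and the kernel come out exactly as $\textrm{Ext}^1_R(\textrm{Tr}\,M, R)$ rather than a larger module.
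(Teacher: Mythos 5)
The paper offers no proof of this lemma; it is quoted from Martsinkovsky--Strooker, and your route is essentially theirs. Your treatment of the second equivalence is correct: by (\ref{eq.seq.exact}) the vanishing of $\textrm{Ext}^1_R(\textrm{Tr}\,M,R)$ is the injectivity of $e_M$, and torsionless is equivalent to being a submodule of a finite free module by exactly the dualization argument you give. The exact sequence $0 \to \textrm{Ext}^1_R(\textrm{Tr}\,M,R) \to M \to \lambda^2 M \to 0$ for \emph{stable} $M$ is also correct and is the right engine: writing $G_2 \to G_1 \to G_0 \to \textrm{Tr}\,M \to 0$ for the start of a minimal free resolution (minimality of $G_2 \to G_1$ is exactly where stability enters, since it amounts to $M^* \subseteq \Mc G_1$), one gets $\lambda^2 M = \mathrm{Im}(G_1^* \to G_2^*) \cong \mathrm{coker}(G_0^* \to G_1^*)/H^1 \cong M/\ker e_M$.

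There is, however, a genuine gap in the implication ``$M \cong \lambda^2 M$ $\Rightarrow$ $M$ stable''. Your justification rests on the parenthetical claim that $\lambda N = \Omega\textrm{Tr}\,N$, computed from minimal presentations, has no free direct summand. That is false: over a domain $R$ with $0 \neq a \in \Mc$, the module $N = R/(a)$ has minimal presentation $R \xrightarrow{a} R$, so $\textrm{Tr}\,N \cong R/(a)$ and $\lambda N = aR \cong R$ is free; likewise $\lambda k \cong R$ whenever $\operatorname{depth} R \geq 1$ (since $k^* = 0$). So ``$\lambda^2 M$ is automatically stable'' is not justified as written. The standard repair: if $M = M' \oplus R^k$ with $M'$ stable and $k \geq 1$, then $\lambda^2 M = \lambda^2 M'$, which by your exact sequence (applied to $M'$, or $\lambda^2 M' = 0$ if $M' = 0$) is a quotient of $M'$; hence $\nu(\lambda^2 M) \leq \nu(M') < \nu(M)$, contradicting $M \cong \lambda^2 M$. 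Relatedly, the phrase that the cokernel of $M \to \lambda^2 M$ ``is a free summand of $M$'' is not meaningful as stated --- for stable $M$ the comparison map is already surjective; the free summands you need to police are those of $M$ itself, via the count above. With that correction the argument is complete.
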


\begin{lemma}\label{prop4.link} {\rm (\cite[Proposition 4]{link2004})}
Suppose $M$ is horizontally linked. Then, $\lambda M$ is also
horizontally linked and, in particular, $\lambda M$ is stable.
\end{lemma}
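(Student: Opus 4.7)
My plan is to verify the criterion of Lemma \ref{teo2.link}: that $\lambda M$ is a stable syzygy $R$-module. Both the horizontal linkage of $\lambda M$ and the ``in particular'' statement about its stability then follow at once.

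The syzygy property is built in. Starting from a minimal free presentation $F_1 \stackrel{d_1}{\longrightarrow} F_0 \to M \to 0$ and taking the $R$-dual yields
\begin{equation*}
0 \longrightarrow M^* \longrightarrow F_0^* \stackrel{d_1^*}{\longrightarrow} F_1^* \longrightarrow \mathrm{Tr}\, M \longrightarrow 0,
\end{equation*}
so $\lambda M = \mathrm{Im}(d_1^*)$ is embedded in the free module $F_1^*$, hence is a syzygy.

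The essential step is the stability of $\lambda M$. Since $M$ is horizontally linked, Lemma \ref{teo2.link} provides that $M$ itself is stable and $\mathrm{Ext}_R^1(\mathrm{Tr}\, M, R)=0$. Stability of $M$ means every $\varphi \in M^*$ has $\varphi(M) \subseteq \mathfrak{m}$ (otherwise $\varphi$ would split off a free summand of $M$), which translates under the inclusion $M^* \hookrightarrow F_0^*$ into $M^* \subseteq \mathfrak{m} F_0^*$. Consequently, the displayed dualized complex is in fact a \emph{minimal} free presentation of $\mathrm{Tr}\, M$, and $\lambda M$ is identified with $F_0^*/M^* \hookrightarrow F_1^*$. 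To rule out a free direct summand of $\lambda M$, I would dualize the short exact sequence $0 \to \lambda M \to F_1^* \to \mathrm{Tr}\, M \to 0$, invoke $\mathrm{Ext}_R^1(\mathrm{Tr}\, M, R)=0$ to get $(\lambda M)^* \cong F_1/(\mathrm{Tr}\, M)^*$, and then combine this with the evaluation exact sequence (\ref{eq.seq.exact}) applied to $\lambda M$ in order to relate $\mathrm{Tr}\, \lambda M$ back to $M$ through the stable identity $\lambda^2 M \cong M$ forced by horizontal linkage of $M$. A putative free summand of $\lambda M$ would then be transported through these bidualities to a free summand of $M$, contradicting its stability. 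This biduality-transport is the main obstacle; once stability is secured, Lemma \ref{teo2.link} applied to $\lambda M$ concludes the proof.
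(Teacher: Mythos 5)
The paper itself does not prove this lemma; it quotes it as \cite[Proposition 4]{link2004}, where the argument is essentially one line: since $\lambda=\Omega\,\mathrm{Tr}$ is computed from minimal presentations, it is well defined on isomorphism classes, so $M \cong \lambda^2 M$ immediately gives $\lambda M \cong \lambda(\lambda^2 M) = \lambda^2(\lambda M)$; thus $\lambda M$ is horizontally linked by definition, and its stability then follows from Lemma \ref{teo2.link}. Your plan instead verifies the ``stable syzygy'' criterion of Lemma \ref{teo2.link} for $\lambda M$ directly. The syzygy half is fine: $\lambda M = \mathrm{Im}(d_1^*) \subseteq F_1^*$.

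The gap is in the stability step, which you yourself flag as the main obstacle. The proposed mechanism --- that a putative free direct summand of $\lambda M$ would be ``transported through these bidualities to a free summand of $M$'' --- does not work, because $\mathrm{Tr}$ (hence $\lambda$) annihilates free summands rather than propagating them: if $\lambda M \cong X \oplus R^n$, then $\mathrm{Tr}(\lambda M) \cong \mathrm{Tr}\,X$ and $\lambda^2 M \cong \lambda X$, so the hypothesis $M \cong \lambda^2 M$ only yields $M \cong \lambda X$, and no free summand of $M$ (hence no contradiction with its stability) emerges along the route you describe. Likewise, the identification $(\lambda M)^* \cong F_1/(\mathrm{Tr}\,M)^*$ coming from $\mathrm{Ext}^1_R(\mathrm{Tr}\,M,R)=0$ gives no direct handle on free summands of $\lambda M$. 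As written the proof is therefore incomplete at its decisive point. The clean fix is to abandon the direct stability check: deduce $\lambda M\cong\lambda^2(\lambda M)$ from $M\cong\lambda^2 M$ as above, and then read off the stability of $\lambda M$ from Lemma \ref{teo2.link}.
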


\subsection{Canonical modules}

In the following we collect basic facts about canonical modules.

\begin{lemma}\label{teo.canonical.module} {\rm (\cite{CMr})} Let $R$ be a Cohen-Macaulay local ring with canonical module $\omega_R$. Let $M$ be a maximal Cohen-Macaulay $R$-module. Then the
following statements hold:
\begin{itemize}
\item[(i)] $\textrm{\emph{Hom}}_{R}(M, \omega_R)$ is a maximal Cohen-Macaulay $R$-module; 
\item [(ii)] $\textrm{\emph{Ext}}_{R}^i(M, \omega_R)=0$ for all $i > 0$;
\item [(iii)] $M \cong \textrm{\emph{Hom}}_{R}(\textrm{\emph{Hom}}_{R} (M, \omega_R), \omega_R)$;
\item [(iv)] If ${\bf y}$ is an $R$-sequence, then $R/({\bf y})$ has a canonical module
$\omega_{R/({\bf y})} \cong\omega_R/{\bf y}\omega_R$; 
\item [(v)] Let $\varphi: R\rightarrow S$
be a local homomorphism of Cohen-Macaulay local rings such that $S$ is a finite $R$-module. Then $S$ has a canonical module $\omega_S \cong \textrm{\emph{Ext}}_{R}^{t}(S,
\omega_R)$, where $t = \dim R - \dim S$. 
\end{itemize}
\end{lemma}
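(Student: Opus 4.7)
The plan is to prove the five items in a coordinated induction on $d = \dim R$, following the standard approach of \cite{CMr}. I would first establish (iv), since it is the bridge between $R$ and $R/(\mathbf{y})$ that powers most of the subsequent induction.

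For (iv), I would reduce by induction on the length of $\mathbf{y}$ to the case of a single $R$-regular element $y$. Since $\omega_R$ is maximal Cohen-Macaulay, $y$ is also $\omega_R$-regular, and it is straightforward to check that $\omega_R/y\omega_R$ is a module over $R/yR$ of finite injective dimension and type one; uniqueness of the canonical module then identifies it with $\omega_{R/yR}$.

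Next, (i), (ii), and (iii) should be proved simultaneously by induction on $d$. The base case $d = 0$ rests on Matlis duality: in the Artinian setting $\omega_R$ is the injective envelope of $R/\Mc$, so $(-)^{\vee} := \Hom_R(-, \omega_R)$ is an exact contravariant duality on finite $R$-modules, from which (i)--(iii) follow at once. For the inductive step I pick $y \in \Mc$ that is regular on both $R$ and $M$ (possible since $M$ is maximal Cohen-Macaulay and $d \geq 1$). Using (iv) to identify $\omega_R/y\omega_R \cong \omega_{R/yR}$ and the short exact sequence $0 \to M \xrightarrow{y} M \to M/yM \to 0$, I would obtain (ii) by dimension shifting, (i) through the identification $M^{\vee}/yM^{\vee} \cong (M/yM)^{\vee}$ (which itself uses (ii)) followed by a depth computation, and (iii) by applying Nakayama's lemma to the biduality map $M \to M^{\vee\vee}$ after passing to the quotient $R/yR$.

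Finally, for (v) the module $\textrm{Ext}_R^t(S, \omega_R)$ carries a natural $S$-module structure, and one verifies directly that it is finite over $S$, has depth equal to $\dim S$, and has type one over $S$; these properties characterize $\omega_S$ up to isomorphism. The main obstacle, and the step that truly requires care, is the Artinian base case: it relies on the identification $\omega_R \cong E_R(R/\Mc)$ in dimension zero together with the full strength of Matlis duality. Once that is in place, the rest is bookkeeping via induction on $d$ and standard change-of-rings arguments.
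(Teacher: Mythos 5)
The paper offers no proof of this lemma --- it is quoted directly from Bruns--Herzog \cite{CMr} (essentially Theorems 3.3.5, 3.3.7 and 3.3.10 there) --- so there is no internal argument to compare against; your outline reproduces the standard development of the cited source: Matlis duality in dimension zero, reduction modulo a regular element via (iv), and the uniqueness characterization of the canonical module for (iv) and (v). One small correction: a maximal Cohen--Macaulay module of type one need \emph{not} be a canonical module in general, so the characterization you invoke must also include finite injective dimension; this extra property does hold for $\omega_R/y\omega_R$ over $R/yR$ and for $\mathrm{Ext}_R^{t}(S,\omega_R)$ over $S$ by the usual change-of-rings results, so the argument closes as intended.
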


\section{Hom functor and the Ulrich property} \label{sec3}

In this section we investigate, in essence, the behavior of the Ulrich property under the Hom functor.

\subsection{Key lemma, main result, and corollaries} We start with the following basic lemma, which will be a key ingredient in the proof of the main result of this section.

\begin{lemma} \label{lema2.geral}
Let $R$ be a Cohen-Macaulay local ring, $M, N$ be maximal
Cohen-Macaulay $R$-modules, and ${\bf y} = y_1, \ldots, y_n$ be an
$R$-sequence for some $n\geq 1$.
\begin{itemize}
\item[(i)] If either $n=1$ or $\textrm{\emph{Ext}}_{R}^i(M, N)=0$ for all $i=1, \ldots, n-1$, there is an injection $$\textrm{\emph{Hom}}_{R}(M, N)/{\bf y}\textrm{\emph{Hom}}_{R}(M, N) \hookrightarrow
\textrm{\emph{Hom}}_{R/({\bf y})}(M/{\bf y}M, N/{\bf y}N);$$
\item[(ii)] If $\textrm{\emph{Ext}}_{R}^i(M, N)=0$ for all $i=1, \ldots, n$, there is an isomorphism $$\textrm{\emph{Hom}}_{R} (M, N)/{\bf y}\textrm{\emph{Hom}}_{R}(M, N) \cong
\textrm{\emph{Hom}}_{R/({\bf y})}(M/{\bf y}M,
N/{\bf y}N).$$
\end{itemize}
\end{lemma}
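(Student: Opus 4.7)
The plan is to prove both parts by induction on $n$, with the case $n=1$ serving as the core argument and the inductive step reducing to it after going modulo $y_1, \ldots, y_{n-1}$.

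First I would treat the base case $n=1$. Since $M$ and $N$ are maximal Cohen-Macaulay and $y_1$ is $R$-regular, it is automatically both $M$-regular and $N$-regular. Applying $\Hom_R(M, -)$ to the short exact sequence $0 \to N \xrightarrow{y_1} N \to N/y_1N \to 0$ produces an exact sequence whose initial terms are
$$0 \to \Hom_R(M,N)/y_1 \Hom_R(M,N) \to \Hom_R(M, N/y_1 N) \to \mathrm{Ext}^1_R(M, N).$$
Since $y_1$ annihilates $N/y_1 N$, every $R$-linear map $M \to N/y_1 N$ factors through $M/y_1 M$ and is automatically $R/y_1$-linear, yielding the natural identification $\Hom_R(M, N/y_1 N) \cong \Hom_{R/y_1}(M/y_1 M, N/y_1 N)$. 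This immediately gives the injection in (i), and the isomorphism in (ii) under the extra hypothesis that $\mathrm{Ext}^1_R(M,N) = 0$.

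For the inductive step, set ${\bf y}' = y_1, \ldots, y_{n-1}$, $\bar R = R/({\bf y}')$, $\bar M = M/{\bf y}'M$, $\bar N = N/{\bf y}'N$. Then $\bar R$ is Cohen-Macaulay of dimension $d-(n-1)$, both $\bar M$ and $\bar N$ are maximal Cohen-Macaulay over $\bar R$, and $y_n$ is $\bar R$-regular. For part (i), the hypothesis $\mathrm{Ext}^i_R(M,N) = 0$ for $1 \leq i \leq n-1$ allows me to apply the inductive form of (ii) to the shorter sequence ${\bf y}'$, giving
$$\Hom_R(M,N)/{\bf y}' \Hom_R(M,N) \;\cong\; \Hom_{\bar R}(\bar M, \bar N).$$
The base case $n=1$ applied over $\bar R$ with respect to $y_n$ then produces the desired injection, and quotienting by $y_n$ on both sides recovers the claimed map modulo the full sequence ${\bf y}$.

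For the inductive step of (ii) the same reduction works, but I additionally need $\mathrm{Ext}^1_{\bar R}(\bar M, \bar N) = 0$ in order to upgrade the base-case injection to an isomorphism. I would establish this vanishing in two moves. First, by repeatedly applying the long exact sequence of $\mathrm{Ext}^\bullet_R(M, -)$ to the short exact sequences $0 \to L \xrightarrow{y_i} L \to L/y_i L \to 0$ with $L = N/(y_1, \ldots, y_{i-1})N$, a straightforward finite induction shows that $\mathrm{Ext}^j_R(M,N) = 0$ for $1 \leq j \leq n$ propagates to $\mathrm{Ext}^1_R(M, \bar N) = 0$. Second, since ${\bf y}'$ is $M$-regular and annihilates $\bar N$, a minimal $R$-free resolution of $M$ reduces modulo ${\bf y}'$ to an $\bar R$-free resolution of $\bar M$, and the standard change of rings argument yields $\mathrm{Ext}^1_R(M, \bar N) \cong \mathrm{Ext}^1_{\bar R}(\bar M, \bar N)$. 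Combining these gives the required vanishing, and hence the base case delivers the isomorphism. The main obstacle is precisely this bookkeeping in part (ii): one must track how the vanishing of $\mathrm{Ext}^j_R(M,N)$ for $1 \leq j \leq n$ transfers to a single Ext vanishing over the quotient $\bar R$, which depends on the interplay between the Ext long exact sequence and the change of rings identification available because ${\bf y}'$ is an $M$-sequence.
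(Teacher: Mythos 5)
Your proof is correct, and while it follows the same overall template as the paper's (induct on $n$, reduce to the case of a single regular element via a long exact sequence of Ext), the mechanics are genuinely different. The paper applies $\Hom_R(-,N)$ to $0 \to M \xrightarrow{y_1} M \to M/y_1M \to 0$ and then uses Rees' change-of-rings isomorphism $\mathrm{Ext}^i_{R/(y_1)}(M/y_1M,\,N/y_1N) \cong \mathrm{Ext}^{i+1}_R(M/y_1M,\,N)$ (\cite[Lemma 3.1.16]{CMr}); its induction peels off $y_1$ first, and the hypothesis $\mathrm{Ext}^i_R(M,N)=0$ for $i\le n-1$ passes in one stroke to $\mathrm{Ext}^i_{R/(y_1)}(M',N')=0$ for $i\le n-2$, so only assertion (i) needs to be argued in detail. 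You instead run the long exact sequence in the second variable, which has the advantage that the base-case injection lands directly in $\Hom_R(M,N/y_1N)\cong\Hom_{R/(y_1)}(M/y_1M,N/y_1N)$ with no degree shift, and you peel off the last element $y_n$ after collapsing $y_1,\dots,y_{n-1}$ via a simultaneous induction in which both (i)$_n$ and (ii)$_n$ rest on (ii)$_{n-1}$. The price is the extra transfer step you correctly flag: getting from $\mathrm{Ext}^{j}_R(M,N)=0$, $1\le j\le n$, to $\mathrm{Ext}^1_{\bar R}(\bar M,\bar N)=0$, which you handle in two sound moves (second-variable long exact sequences to kill $\mathrm{Ext}^1_R(M,\bar N)$, then the fact that a free resolution of $M$ reduces modulo the $M$-regular sequence ${\bf y}'$ to an $\bar R$-free resolution of $\bar M$). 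Both routes are standard and of comparable length; the paper's is slightly more economical because a single change-of-rings lemma does all the transferring at once, while yours avoids the Auslander--Rees shift entirely.
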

\begin{proof} We shall prove the assertion (i), which from the arguments below (essentially from (\ref{eq1.lema.})) will be easily seen to imply (ii). Set $R' = R/(y_1)$, $M' = M / y_1 M$, and
$N' = N / y_1 N$. We will proceed by induction on $n$. Consider first the case $n = 1$, which is standard but we supply the proof for convenience. Since $M$ and $N$ are maximal Cohen-Macaulay $R$-modules and $y_1 \in \Mc$ is 
$R$-regular (where $\Mc$ is the maximal ideal of $R$), it follows that $y_1$ is both $M$-regular and $N$-regular. In particular, we have the short exact sequence
$$0 \longrightarrow M \stackrel{y_1}{\longrightarrow} M \longrightarrow M'
\longrightarrow 0,$$
which induces the exact sequence
\begin{equation}\label{eq1.lema.}
\begin{array}{c} 0 \longrightarrow \textrm{Hom}_R(M', N)
\longrightarrow \textrm{Hom}_R(M, N) \stackrel{y_1}{\longrightarrow}
\textrm{Hom}_{R} (M, N) \longrightarrow \textrm{Ext}_{R}^{1}(M', N)
\longrightarrow  \\
 \cdots \longrightarrow
\textrm{Ext}_R^i(M, N) \longrightarrow \textrm{Ext}_R^{i+1}(M', N)
\longrightarrow \textrm{Ext}_R^{i+1}(M, N) \longrightarrow \cdots.
\end{array}
\end{equation}
It follows an injection
\begin{equation}\label{eq2.lema.}
\textrm{Hom}_{R} (M, N) /y_1 \textrm{Hom}_{R} (M, N)
\hookrightarrow \textrm{Ext}_{R}^{1}(M',N).
\end{equation}
Because $y_1$ is $N$-regular and $y_1 M' = 0$, there are isomorphisms (see \cite[Lemma 3.1.16]{CMr})
\begin{equation}\label{eq3.lema.}
\textrm{Ext}_{R'}^i(M', N') \cong \textrm{Ext}_{R}^{i+1}(M', N) \quad
\mbox{for \,all} \quad i \geq 0.
\end{equation}
In particular,
\begin{equation}\label{eq4.lema.}
\textrm{Hom}_{R'}(M', N') \cong \textrm{Ext}_{R}^{1}(M', N),
\end{equation}
and the result follows by (\ref{eq2.lema.}) and
(\ref{eq4.lema.}).

Now let $n \geq 2$. Clearly, $R'$ is a Cohen-Macaulay ring and $M', N'$
are maximal Cohen-Macaulay $R'$-modules. By assumption,
$\textrm{Ext}_{R}^i(M, N)=0$ for all $i=1, \ldots, n-1$. Thus, using
(\ref{eq1.lema.}) and  (\ref{eq3.lema.}), we
obtain isomorphisms
\begin{equation}\label{eq5.lema.}
\textrm{Ext}_{R'}^{i}(M', N') \cong \left\{
\begin{array}{ccccccc}
\textrm{Hom}_{R} (M, N) /y_1 \textrm{Hom}_{R} (M, N),  \textrm{ if } i = 0, \\
                0,  \textrm{ if } i=1, \ldots, n-2.
\end{array}\right.
\end{equation}
Since $\textbf{y}' = y_2, \ldots, y_n$ is an $R'$-sequence, the
induction hypothesis yields an injection $$\textrm{Hom}_{R'} (M', N') /
\textbf{y}' \textrm{Hom}_{R'} (M', N') \hookrightarrow
\textrm{Hom}_{R'/\textbf{y}'R'}(M'/\textbf{y}' M', N'/ \textbf{y}'
N'),$$ where the latter module is clearly isomorphic to ${\rm Hom}_{R/(\textbf{y})}(M/\textbf{y}M, N/\textbf{y}N)$. Now the conclusion follows by (\ref{eq5.lema.}) with
$i=0$. \qed
\end{proof}

\medskip

The theorem below is our main result in this section.

\begin{theorem}\label{teo.teo5.1.goto.generalizado}
Let $R$ be a Cohen-Macaulay local ring of dimension $d$. Let $M,N$
be maximal Cohen-Macaulay $R$-modules such that
$\textrm{\emph{Hom}}_{R} (M, N) \neq 0$ and
$\textrm{\emph{Ext}}_{R}^i(M, N)=0$ for all $i=1, \ldots, n$, where either $n=d-1$ or $n=d$. Let $\Ic$ and $Q$ be as in Convention \ref{convention}. Assume that $M$ {\rm (}resp.\,$N${\rm )} is an Ulrich $R$-module with respect
to $\Ic$, and consider the following conditions:
\begin{itemize}
\item[(i)] $\textrm{\emph{Hom}}_{R} (M, N)$ is an Ulrich
$R$-module with respect to $\Ic$;
\item[(ii)] $\textrm{\emph{Hom}}_{R} (M, N) / \Ic \textrm{\emph{Hom}}_{R} (M, N)$ is a free $R/\Ic$-module;
\item[(iii)] $\textrm{\emph{Hom}}_{R/Q}(R/\Ic, N/QN)$ {\rm (}resp. $\textrm{\emph{Hom}}_{R/Q}(M/QM, R/\Ic)${\rm )} is a free $R/\Ic$-module.
\end{itemize}
Then the following statements hold:
\begin{enumerate}
\item[(a)] If $n = d-1$ then {\rm (i)} $\Leftrightarrow$ {\rm (ii)};
\item[(b)] If $n = d$ then {\rm (i)} $\Leftrightarrow$ {\rm (ii)} $\Leftrightarrow$ {\rm (iii)}.
\end{enumerate}
\end{theorem}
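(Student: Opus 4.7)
The plan is to reduce everything to Lemma~\ref{lema2.geral}, exploiting the fact that the Ulrich hypothesis on $M$ (or $N$) forces $M/QM$ (resp.\ $N/QN$) to be a \emph{free} $R/\Ic$-module, so the target of the (in)jection in Lemma~\ref{lema2.geral} splits in a convenient way. Throughout, write $H := \mathrm{Hom}_R(M,N)$, which is non-zero by hypothesis.

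First I would verify that $H$ is maximal Cohen-Macaulay, so that the Ulrich property for $H$ reduces to the two conditions $\Ic H = QH$ and $H/\Ic H$ free over $R/\Ic$. Granting the Ext-vanishing, the proof of Lemma~\ref{lema2.geral} already shows that $x_1$ is $H$-regular (using $\mathrm{Hom}_R(M/x_1 M, N) = 0$, since $x_1$ is $N$-regular); iterating this with the quotients $R_k = R/(x_1,\ldots,x_k)$, $M_k = M/(x_1,\ldots,x_k)M$, $N_k = N/(x_1,\ldots,x_k)N$, and using the vanishing $\mathrm{Ext}^{i}_{R_k}(M_k, N_k) = 0$ inherited via Lemma~\ref{lema2.geral}, one sees that $x_1,\ldots,x_d$ is an $H$-sequence, hence $H$ is MCM.

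Next I would establish $\Ic H = QH$. Taking $\mathbf{y} = \mathbf{x}$ in Lemma~\ref{lema2.geral}(i), which is available as soon as $n \geq d-1$, yields an injection
\[
H/QH \;\hookrightarrow\; \mathrm{Hom}_{R/Q}(M/QM,\, N/QN).
\]
If $M$ is Ulrich with respect to $\Ic$, then $\Ic M = QM$, so $\Ic/Q$ annihilates $M/QM$; for every $R/Q$-linear $\phi : M/QM \to N/QN$ and every $a \in \Ic$, one has $(a\phi)(\bar m) = \phi(a\bar m) = 0$, whence $\Ic$ annihilates the right-hand side. The injection forces $\Ic(H/QH) = 0$, i.e.\ $\Ic H \subseteq QH$; the reverse inclusion is automatic. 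The same argument works if $N$ is Ulrich, since then $\Ic/Q$ annihilates $N/QN$, again killing the Hom. Combining the previous two steps, the definition of Ulrich then gives (i)$\Leftrightarrow$(ii) in both cases~(a) and~(b), since the two remaining conditions are already in force.

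For the extra equivalence in~(b), with $n = d$, Lemma~\ref{lema2.geral}(ii) promotes the injection above to an $R$-linear isomorphism. In the case where $M$ is Ulrich, we have $M/QM \cong (R/\Ic)^{\nu(M)}$ as $R/Q$-modules, and this splits the Hom into
\[
H/QH \;\cong\; \mathrm{Hom}_{R/Q}(R/\Ic,\, N/QN)^{\nu(M)}
\]
as $R/\Ic$-modules; since $\Ic H = QH$, the left side is $H/\Ic H$. Because $R/\Ic$ is an Artinian local ring and a finite module $X$ over such a ring is free iff $X^{\nu(M)}$ is (projective = free over a local ring, and direct summands of free modules are projective), (ii)$\Leftrightarrow$(iii) follows. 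The $N$-Ulrich case is symmetric, using $N/QN \cong (R/\Ic)^{\nu(N)}$ and $\mathrm{Hom}_{R/Q}(M/QM, R/\Ic)^{\nu(N)}$.

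The main technical nuisance will be bookkeeping: checking that the $R/\Ic$-module structure is preserved through the chain of isomorphisms, verifying that one really may invoke Lemma~\ref{lema2.geral} in both its forms under the stated Ext ranges, and confirming the ``$X^s$ free iff $X$ free'' step over $R/\Ic$ -- none of which is deep, but which must be handled cleanly.
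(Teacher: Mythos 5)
Your proposal is correct and follows essentially the same route as the paper: Lemma \ref{lema2.geral}(i) gives the injection forcing $\Ic H=QH$, Lemma \ref{lema2.geral}(ii) upgrades it to an isomorphism, and the splitting $M/QM\cong(R/\Ic)^{\nu(M)}$ together with ``$X^{s}$ free iff $X$ free over the Artinian local ring $R/\Ic$'' yields (ii)$\Leftrightarrow$(iii). The only (harmless) deviation is in verifying that $\mathrm{Hom}_R(M,N)$ is maximal Cohen--Macaulay: you iterate the key lemma to exhibit $x_1,\ldots,x_d$ as an $H$-regular sequence, whereas the paper dualizes a free resolution of $M$ and runs a depth count on the resulting exact sequence; both are standard and use exactly the hypothesis $\mathrm{Ext}^i_R(M,N)=0$ for $i\le d-1$.
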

\begin{proof}
(a) Applying the functor ${\rm Hom}_R(-, N)$ to a free resolution
$$\cdots \longrightarrow F_{d+1} \longrightarrow F_{d} \longrightarrow F_{d-1}
\longrightarrow \cdots \longrightarrow F_{1} \longrightarrow F_0 \longrightarrow M
\longrightarrow 0$$ of the $R$-module $M$, and using the hypothesis that $\textrm{Ext}_{R}^i(M, N)=0$ for $i=1, \ldots, d-1$, we obtain an exact sequence $$0 \rightarrow
\textrm{Hom}_{R}(M, N) \rightarrow \textrm{Hom}_{R}(F_0, N)
\rightarrow \cdots \rightarrow \textrm{Hom}_{R}(F_{d-1}, N)
\rightarrow \textrm{Hom}_{R}(F_{d}, N).$$ Now set $X_0 :=
\textrm{Hom}_{R} (M, N)$ and $X_i := \textrm{Im} 
(\textrm{Hom}_{R} (F_{i-1}, N) \rightarrow \textrm{Hom}_{R}
(F_{i}, N))$, $i = 1, \ldots, d$. Since $N$ is maximal
Cohen-Macaulay, we have $\textrm{depth}_R\textrm{Hom}_{R}
(F_{i}, N) = d$ for all $i = 0, \ldots, d$. Thus, by the short exact sequence
$$0 \longrightarrow X_i \longrightarrow
\textrm{Hom}_{R}(F_{i}, N) \longrightarrow  X_{i+1} \longrightarrow
0,$$ we get $\textrm{depth}_RX_i \geq \min \{ d, \textrm{depth}_RX_{i+1} + 1 \}$ (see, e.g., \cite[Proposition 1.2.9]{CMr}). Therefore, $$\textrm{depth}_R\textrm{Hom}_{R} (M,N) \geq
\min \{ d, \textrm{depth}_RX_{d} + d \} = d,$$ i.e., $\textrm{Hom}_{R} (M,N)$ is a maximal Cohen-Macaulay $R$-module.

Now, as in Convention \ref{convention}, let ${\bf x}=x_1, \ldots, x_d$ be a generating set of the parameter ideal $Q$. Then ${\bf x}$ is an $R$-sequence (see \cite[Theorem 2.1.2(d)]{CMr}), and so by Lemma
\ref{lema2.geral}(i) there is an injection
\begin{equation}\label{eq1.teo.teo5.1.goto.generalizado}
\textrm{Hom}_{R} (M, N)/Q\textrm{Hom}_{R} (M, N)
\hookrightarrow \textrm{Hom}_{R/Q} (M/QM,
N/QN).
\end{equation}
Because $M$ (resp. $N$) is assumed to be Ulrich with respect to $\Ic$, the module $M/QM$ (resp. $N/QN$) is annihilated by
$\Ic$, and hence so is $\textrm{Hom}_{R/Q}(M/QM, N/QN)$. In either case, it follows from
(\ref{eq1.teo.teo5.1.goto.generalizado}) that the quotient 
$\textrm{Hom}_{R} (M, N)/Q\textrm{Hom}_{R} (M, N)$ is
annihilated by $\Ic$. Thus, $$\Ic\textrm{Hom}_{R}(M, N) = Q\textrm{Hom}_{R}(M, N).$$ Therefore,
$\textrm{Hom}_{R} (M, N)$ is  Ulrich with respect to
$\Ic$ if and only if the quotient module
$\textrm{Hom}_{R} (M, N)/\Ic \textrm{Hom}_{R} (M, N)$ is
$R/\Ic$-free, i.e., (i) $\Leftrightarrow$ (ii).

(b) As seen above, there is an equality $\Ic\textrm{Hom}_{R} (M, N) =
Q\textrm{Hom}_{R} (M, N)$. Notice that, even more, Lemma \ref{lema2.geral}(ii) yields an isomorphism
\begin{equation}\label{eq2.teo.teo5.1.goto.generalizado}
\textrm{Hom}_{R} (M, N)/Q\textrm{Hom}_{R}(M, N) \cong
\textrm{Hom}_{R/Q}(M/QM, N/QN).
\end{equation} 
Now suppose, say, $M$ is Ulrich with respect to $\Ic$.
From $M/QM=M/\Ic M \cong
(R/\Ic)^m$ for some integer $m
> 0$, we deduce that
\begin{equation}\label{eq3.teo.teo5.1.goto.generalizado}
\textrm{Hom}_{R/Q} (M/QM, N/QN)
\cong (\textrm{Hom}_{R/Q} (R/\Ic,
N/QN))^m.
\end{equation}
By (\ref{eq2.teo.teo5.1.goto.generalizado}) and
(\ref{eq3.teo.teo5.1.goto.generalizado}), we get $$\textrm{Hom}_{R} (M, N) / \Ic \textrm{Hom}_{R} (M, N)
\cong (\textrm{Hom}_{R/Q} (R/\Ic,
N/QN))^m. $$ Therefore, the quotient $\textrm{Hom}_{R} (M, N) /\Ic \textrm{Hom}_{R} (M, N)$ is $R/\Ic$-free if and only if the module $\textrm{Hom}_{R/Q}(R/\Ic, N/QN)$ is $R/\Ic$-free. The case where $N$ is Ulrich with respect
to $\Ic$ is completely similar. This shows (ii) $\Leftrightarrow$ (iii) and concludes the proof of the theorem. \qed
\end{proof}

\begin{remark}\rm It is worth observing that the condition $\textrm{{Hom}}_{R}(M, N) = 0$ can hold even if $M$ and $N$ are both maximal Cohen-Macaulay. For instance, over the local ring $R=K[[x, y]]/(xy)$, where $x, y$ are formal variables over a field $K$, we have $$\textrm{{Hom}}_{R}(R/xR, R/yR) = 0.$$ We do not know whether this can occur if $M$ or $N$ is Ulrich.
    
\end{remark}

We point out that Theorem \ref{teo.teo5.1.goto.generalizado} generalizes \cite[Proposition
4.1]{kobayashi2018} (see Corollary \ref{cor2.teo5.1.goto.generalizado}, to be given shortly) and, in addition, recovers the following result from \cite{goto2014}.

\begin{corollary}\label{cor1.teo5.1.goto.generalizado}
{\rm (\cite[Theorem 5.1]{goto2014})} Let $R$ be a Cohen-Macaulay local
ring with canonical module $\omega_R$, and let $M$ be an Ulrich
$R$-module with respect to $\Ic$. Then the following
assertions are equivalent:
\begin{itemize}
\item[(i)] $\textrm{\emph{Hom}}_{R}(M, \omega_R)$ is an Ulrich $R$-module with respect to $\Ic$;
\item[(ii)] $\Ic$ is a Gorenstein ideal.
\end{itemize}
\end{corollary}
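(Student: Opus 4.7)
The plan is to deduce the corollary directly from Theorem \ref{teo.teo5.1.goto.generalizado}(b) by choosing $N=\omega_R$ and then identifying the resulting ``free-module'' condition on $\textrm{Hom}_{R/Q}(R/\Ic, \omega_R/Q\omega_R)$ with the Gorenstein property of $R/\Ic$.

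First I would verify that all hypotheses of Theorem \ref{teo.teo5.1.goto.generalizado} with $n=d$ are met for the pair $(M, \omega_R)$. The module $\omega_R$ is maximal Cohen-Macaulay by definition, and Lemma \ref{teo.canonical.module}(ii) gives $\textrm{Ext}_R^i(M,\omega_R)=0$ for all $i\geq 1$, covering the Ext-vanishing for $i=1,\dots,d$. Moreover, $\textrm{Hom}_R(M,\omega_R)$ is maximal Cohen-Macaulay by Lemma \ref{teo.canonical.module}(i), hence nonzero under our convention that MCM modules are nonzero. Thus Theorem \ref{teo.teo5.1.goto.generalizado}(b) applies and gives
\[
\text{(i)}\ \Longleftrightarrow\ \textrm{Hom}_{R/Q}(R/\Ic,\,\omega_R/Q\omega_R)\text{ is a free }R/\Ic\text{-module}.
\]

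Next I would rewrite this free-module condition in terms of the canonical module of $R/\Ic$. Since ${\bf x}=x_1,\dots,x_d$ is an $R$-sequence (the parameter ideal $Q$ is generated by a system of parameters in the Cohen-Macaulay ring $R$), Lemma \ref{teo.canonical.module}(iv) gives $\omega_R/Q\omega_R\cong \omega_{R/Q}$. The surjection $R/Q\twoheadrightarrow R/\Ic$ is a finite local homomorphism of Artinian (hence Cohen-Macaulay) local rings of the same dimension $t=0$, so Lemma \ref{teo.canonical.module}(v) yields
\[
\textrm{Hom}_{R/Q}(R/\Ic,\omega_{R/Q})\;\cong\;\omega_{R/\Ic}.
\]
Consequently, (i) is equivalent to $\omega_{R/\Ic}$ being a free $R/\Ic$-module.

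Finally I would close the loop by arguing that, for the Artinian local ring $A:=R/\Ic$, the canonical module $\omega_A$ is free over $A$ if and only if $A$ is Gorenstein, i.e., $\Ic$ is a Gorenstein ideal. The nontrivial direction uses the standard length computation $\ell_A(\omega_A)=\ell_A(A)$: if $\omega_A\cong A^{k}$, then comparing lengths forces $k=1$, whence $\omega_A\cong A$, which is the defining condition for $A$ being Gorenstein. The most ``delicate'' step, if any, is precisely this last length/rank observation, but it is entirely standard, so I do not foresee a genuine obstacle; the bulk of the argument is just a clean invocation of the main theorem and the recipes in Lemma \ref{teo.canonical.module}.
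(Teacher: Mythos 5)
Your proposal is correct and follows essentially the same route as the paper: apply Theorem \ref{teo.teo5.1.goto.generalizado}(b) with $N=\omega_R$ (the Ext-vanishing coming from Lemma \ref{teo.canonical.module}(ii)), then identify $\textrm{Hom}_{R/Q}(R/\Ic,\omega_R/Q\omega_R)\cong\omega_{R/\Ic}$ via parts (iv) and (v) of Lemma \ref{teo.canonical.module}, so that freeness of this module is exactly the Gorenstein property of $R/\Ic$. Your extra remarks (checking $\textrm{Hom}_R(M,\omega_R)\neq 0$ and the length argument showing a free canonical module must have rank one) are details the paper leaves implicit, but the argument is the same.
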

\begin{proof}
By Lemma \ref{teo.canonical.module}(ii), we have $\textrm{Ext}_{R}^i(M,
\omega_R)=0$ for all $i > 0$. Since $R/Q$ and
$R/\Ic$ are zero-dimensional local rings and the ideal $Q$ is generated by an $R$-sequence, there are isomorphisms
$$\omega_{R/\Ic} \cong \textrm{Hom}_{R/Q}
(R/\Ic, \omega_{R/Q}) \cong
\textrm{Hom}_{R/Q} (R/\Ic, \omega_R/Q\omega_R)$$ according to standard facts (see parts (iv) and (v) of Lemma \ref{teo.canonical.module}). Now, applying Theorem \ref{teo.teo5.1.goto.generalizado}(b) with $N = \omega_R$, we derive that
$\textrm{Hom}_{R}(M, \omega_R)$ is Ulrich with
respect to $\Ic$ if and only if $\omega_{R/\Ic}$
is $R/\Ic$-free, or equivalently, $R/\Ic$ is a
Gorenstein ring. \qed
\end{proof}

\medskip

Taking Remark \ref{obs1.def.urich.ideal} into account, the corollary below is readily seen to generalize \cite[Corollary 5.2]{goto2014}.

\begin{corollary}\label{cor.teo5.2.goto.generalizado}
Let $R$ be a Cohen-Macaulay local ring with canonical module
$\omega_R$, and let $M$ be a maximal Cohen-Macaulay $R$-module. Assume that the ideal $\Ic$ is Gorenstein. Then the following
assertions are equivalent:
\begin{itemize}
\item[(i)] $M$ is an Ulrich $R$-module with respect to $\Ic$;
\item[(ii)] $\textrm{\emph{Hom}}_{R}(M, \omega_R)$ is an Ulrich $R$-module with respect to $\Ic$.
\end{itemize}
\end{corollary}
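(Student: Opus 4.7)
The plan is to deduce both implications by applying Corollary \ref{cor1.teo5.1.goto.generalizado} twice, once directly to $M$ and once to its canonical dual, exploiting the reflexivity of maximal Cohen-Macaulay modules under the functor $\textrm{Hom}_R(-, \omega_R)$ recorded in Lemma \ref{teo.canonical.module}(iii).

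First I would dispatch the implication (i) $\Rightarrow$ (ii) as an immediate consequence of Corollary \ref{cor1.teo5.1.goto.generalizado}: if $M$ is Ulrich with respect to $\Ic$ and $\Ic$ is Gorenstein, then that corollary tells us exactly that $\textrm{Hom}_R(M, \omega_R)$ is Ulrich with respect to $\Ic$. No extra work is needed here; this direction is essentially a reformulation.

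For the converse (ii) $\Rightarrow$ (i), set $N = \textrm{Hom}_R(M, \omega_R)$. By Lemma \ref{teo.canonical.module}(i), $N$ is a maximal Cohen-Macaulay $R$-module, and by assumption it is even Ulrich with respect to $\Ic$. The key observation is that Lemma \ref{teo.canonical.module}(iii) gives the canonical isomorphism $M \cong \textrm{Hom}_R(N, \omega_R)$. Now I apply Corollary \ref{cor1.teo5.1.goto.generalizado} to the Ulrich module $N$ in place of $M$: since the ideal $\Ic$ is Gorenstein, the corollary yields that $\textrm{Hom}_R(N, \omega_R)$ is Ulrich with respect to $\Ic$, and in view of the isomorphism above this says precisely that $M$ is Ulrich with respect to $\Ic$, as required.

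Since both directions reduce cleanly to the already-proved Corollary \ref{cor1.teo5.1.goto.generalizado}, there is no substantive obstacle; the only point worth flagging is to make sure the Ext vanishing hypothesis of Theorem \ref{teo.teo5.1.goto.generalizado} (which underlies Corollary \ref{cor1.teo5.1.goto.generalizado}) applies in the second invocation. This is automatic because $N$ is maximal Cohen-Macaulay, so Lemma \ref{teo.canonical.module}(ii) gives $\textrm{Ext}_R^i(N, \omega_R) = 0$ for all $i > 0$, which is exactly what is needed.
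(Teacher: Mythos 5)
Your proposal is correct and follows essentially the same route as the paper: the paper's proof likewise invokes the biduality isomorphism $M \cong \textrm{Hom}_{R}(\textrm{Hom}_{R}(M,\omega_R),\omega_R)$ from Lemma \ref{teo.canonical.module}(iii) and then applies Corollary \ref{cor1.teo5.1.goto.generalizado} in both directions. Your extra remark that the required Ext-vanishing for the second application comes from Lemma \ref{teo.canonical.module}(ii) is a correct (and implicit in the paper) detail.
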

\begin{proof} We have an isomorphism $M \cong
    \textrm{Hom}_{R} (\textrm{Hom}_{R} (M, \omega_R), \omega_R)$ (see Lemma \ref{teo.canonical.module}(iii)). Now the
    conclusion follows by Corollary
    \ref{cor1.teo5.1.goto.generalizado}. \qed
\end{proof}

\medskip

Our next result is a far-reaching extension of \cite[Lemma 2.2]{BHU} (see also Corollary \ref{regularity}).

\begin{corollary}\label{far}
Let $R$ be a Cohen-Macaulay local ring with canonical module
$\omega_R$. Assume that the ideal $\Ic$ is Gorenstein. Then the following assertions
are equivalent:
\begin{itemize}
\item[(i)] $\Ic$ is a parameter ideal;
\item[(ii)] $R$ is an Ulrich $R$-module with respect to $\Ic$;
\item[(iii)] $\omega_R$ is an Ulrich
    $R$-module with respect to $\Ic$.
\end{itemize}
\end{corollary}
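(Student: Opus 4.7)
The plan is to verify the three implications by reducing everything to already-established material, noting that the Gorenstein hypothesis on $\Ic$ is needed only to bring in Corollary \ref{cor.teo5.2.goto.generalizado}, while the equivalence of (i) and (ii) is essentially a tautology from the definition of an Ulrich module applied to the module $M=R$.

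First I would handle (i) $\Leftrightarrow$ (ii) directly. For (i) $\Rightarrow$ (ii): if $\Ic = Q$ is a parameter ideal, then $R$ is trivially maximal Cohen-Macaulay over itself, the equality $\Ic R = Q R$ is immediate, and $R/\Ic R = R/\Ic$ is free of rank one over $R/\Ic$; thus all three defining conditions of Definition \ref{def.I.urich.module} are satisfied. For (ii) $\Rightarrow$ (i): condition (ii) of Definition \ref{def.I.urich.module} applied to $M = R$ reads $\Ic = \Ic R = Q R = Q$, so $\Ic$ is a parameter ideal. Notice that this half of the corollary does not actually require the Gorenstein assumption.

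Next I would deduce (ii) $\Leftrightarrow$ (iii) as a one-line application of Corollary \ref{cor.teo5.2.goto.generalizado} with $M = R$. Indeed, $R$ is maximal Cohen-Macaulay over itself, the ideal $\Ic$ is assumed Gorenstein, and the canonical isomorphism $\textrm{Hom}_R(R,\omega_R) \cong \omega_R$ identifies the two modules whose Ulrich property is being compared. Thus $R$ is Ulrich with respect to $\Ic$ if and only if $\omega_R$ is Ulrich with respect to $\Ic$.

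There is no real obstacle here; the only subtlety worth flagging in the write-up is that the Gorenstein hypothesis on $\Ic$ is used in a genuinely asymmetric way, namely only to pass between (ii) and (iii) through Corollary \ref{cor.teo5.2.goto.generalizado}. The equivalence of (i) and (ii) is unconditional and merely unpacks the definition, so the whole corollary should be expressible in a few lines.
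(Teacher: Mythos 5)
Your proposal is correct and follows essentially the same route as the paper: the equivalence (i) $\Leftrightarrow$ (ii) is read off directly from Definition \ref{def.I.urich.module} (and indeed holds without the Gorenstein hypothesis, as the paper also notes), while (ii) $\Leftrightarrow$ (iii) is obtained from Corollary \ref{cor.teo5.2.goto.generalizado} via $\textrm{Hom}_R(R,\omega_R)\cong\omega_R$. No gaps.
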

\begin{proof} The equivalence
(i) $\Leftrightarrow$ (ii) is immediate from Definition 
\ref{def.I.urich.module} and holds regardless of $\Ic$ being Gorenstein. Now, by virtue of the isomorphisms $\omega_R \cong \textrm{Hom}_{R}
(R, \omega_R)$ and $\textrm{Hom}_{R}
(\omega_R, \omega_R)\cong R$, our Corollary
\ref{cor.teo5.2.goto.generalizado} yields (ii) $\Leftrightarrow$ (iii).\qed
\end{proof}

\medskip

As yet another byproduct of Theorem
\ref{teo.teo5.1.goto.generalizado}, we retrieve 
\cite[Proposition 4.1]{kobayashi2018}, which in turn generalizes the local version of
\cite[Proposition 3.5]{ulrich2003}.

\begin{corollary}\label{cor2.teo5.1.goto.generalizado}
{\rm (\cite[Proposition 4.1]{kobayashi2018})} Let $R$ be a Cohen-Macaulay
local ring of dimension $d$. Let $M,N$ be maximal Cohen-Macaulay
$R$-modules such that $\textrm{\emph{Hom}}_{R} (M, N) \neq 0$ and $\textrm{\emph{Ext}}_{R}^i(M, N)=0$ for all $i=1, \ldots, d-1$. If either $M$ or $N$ is an Ulrich $R$-module, then
so is $\textrm{\emph{Hom}}_{R} (M, N)$.
\end{corollary}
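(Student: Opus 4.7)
The plan is to specialize Theorem \ref{teo.teo5.1.goto.generalizado}(a) to the case $\Ic = \Mc$, which by Remarks \ref{obs.I.urich.module}(i) is precisely the classical Ulrich setting. First I would verify that the hypotheses of the theorem transfer directly: the modules $M$ and $N$ are maximal Cohen-Macaulay, $\Hom_R(M,N)\neq 0$, the Ext vanishing $\Ext^i_R(M,N)=0$ holds for $i=1,\ldots,d-1$ (so we take $n=d-1$), and one of $M,N$ is assumed Ulrich with respect to $\Mc$.

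The crucial observation is that $R/\Mc$ is a field, so every finitely generated $R/\Mc$-module is automatically free. In particular, the quotient
\[
\Hom_R(M,N)/\Mc\Hom_R(M,N)
\]
is trivially a free $R/\Mc$-module, meaning condition (ii) of Theorem \ref{teo.teo5.1.goto.generalizado} is vacuously satisfied when $\Ic=\Mc$.

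By the equivalence (i) $\Leftrightarrow$ (ii) in part (a) of that theorem, this automatic freeness is enough to conclude that $\Hom_R(M,N)$ is an Ulrich $R$-module with respect to $\Mc$, which by Remarks \ref{obs.I.urich.module}(i) is exactly the classical Ulrich property.

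There is essentially no obstacle here: the corollary is a clean specialization of Theorem \ref{teo.teo5.1.goto.generalizado}(a) to the case where the base ring $R/\Ic$ collapses to a field and the freeness hypothesis (ii) becomes automatic. The only small point to note is that Convention \ref{convention} is in force, so that $\Mc$ itself contains a parameter ideal as a reduction (guaranteed, e.g., by the standing assumption of an infinite residue field in the ambient setting).
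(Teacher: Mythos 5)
Your proposal is correct and follows essentially the same route as the paper: both reduce to Theorem \ref{teo.teo5.1.goto.generalizado}(a) with $\Ic=\Mc$, observing that $\Hom_R(M,N)/\Mc\Hom_R(M,N)$ is automatically free as a vector space over the residue field. Your added remark that $\Mc$ must contain a parameter ideal as a reduction (ensured by an infinite residue field per Convention \ref{convention}) is a reasonable precaution that the paper leaves implicit.
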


\begin{proof}
As observed in Remark \ref{obs.I.urich.module}(i), $M$ is an Ulrich $R$-module if
and only if $M$ is an Ulrich $R$-module with respect to the maximal ideal
$\Mc$ of $R$. Now, being a (finite-dimensional) vector space over the residue field $k=R/\Mc$, the module 
$\textrm{Hom}_{R}(M, N)/{\Mc} \textrm{Hom}_{R} (M, N)$
is $k$-free. Thus, $\textrm{Hom}_{R} (M, N)$ is Ulrich by Theorem
\ref{teo.teo5.1.goto.generalizado}(a). \qed
\end{proof}

\subsection{Hom with values in a semidualizing module} Let us recall that a finite module $\Cc$ over a ring $R$ is called \emph{semidualizing} if the morphism $R \rightarrow \textrm{Hom}_{R}(\Cc, \Cc)$ given by homothety is an
isomorphism and $\textrm{Ext}_{R}^i (\Cc, \Cc) = 0$ for all $i > 0$. In this case, a finite $R$-module $M$ is said to be
\emph{totally $\Cc$-reflexive} if the biduality map $M \rightarrow
\textrm{Hom}_{R} (\textrm{Hom}_{R} (M, \Cc), \Cc)$ is an isomorphism
and, in addition, $\textrm{Ext}_{R}^i (M, \Cc) = 0 = \textrm{Ext}_{R}^i
(\textrm{Hom}_{R} (M, \Cc), \Cc)$ for all $i
> 0$. Note every totally $\Cc$-reflexive module is maximal Cohen-Macaulay by virtue of the relative Auslander-Bridger formula (see \cite[Proposition 6.4.2]{sather}). A detailed account about the theory of semidualizing modules is given in \cite{sather}.

As a matter of illustration, $R$ is semidualizing as a module over itself, and, for any semidualizing $R$-module $\Cc$, both $R$ and $\Cc$ are totally $\Cc$-reflexive. More interestingly, if $R$ is a Cohen-Macaulay local ring possessing a canonical module $\omega_R$, then $\omega_R$ is semidualizing and, in addition, every maximal Cohen-Macaulay $R$-module is totally $\omega_R$-reflexive (to see this, use Lemma \ref{teo.canonical.module}). It should also be pointed out, based on the existence of several examples in the literature, that not every semidualizing $R$-module must be isomorphic to $R$ or $\omega_R$; see, e.g., \cite[5]{A-I} and \cite[2.3]{sather}.

\begin{corollary}\label{cor3.teo5.1.goto.generalizado}
Let $R$ be a Cohen-Macaulay local ring with a semidualizing
module $\Cc$, and let $M$ be a totally $\Cc$-reflexive $R$-module. Then,
$M$ is an Ulrich $R$-module if and only if
$\textrm{\emph{Hom}}_{R} (M, \Cc)$ is an Ulrich $R$-module.
\end{corollary}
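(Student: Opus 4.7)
The plan is to reduce everything to Corollary \ref{cor2.teo5.1.goto.generalizado} applied with $N=\Cc$. The first thing to check is that $\Cc$ itself is maximal Cohen-Macaulay, which is a standard fact for semidualizing modules over a Cohen-Macaulay local ring (it follows from the isomorphism $R\cong\textrm{Hom}_R(\Cc,\Cc)$ together with the Ext-vanishing built into the definition). With this in hand, the total $\Cc$-reflexivity of $M$ supplies $\textrm{Ext}_R^i(M,\Cc)=0$ for every $i>0$, so in particular the Ext-vanishing hypothesis of Corollary \ref{cor2.teo5.1.goto.generalizado} for $i=1,\ldots,d-1$ is automatically satisfied.

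For the forward direction, assume $M$ is Ulrich. Then $M$ is a nonzero maximal Cohen-Macaulay $R$-module, and the biduality isomorphism $M\cong \textrm{Hom}_R(\textrm{Hom}_R(M,\Cc),\Cc)$ implies $\textrm{Hom}_R(M,\Cc)\neq 0$. Thus every hypothesis of Corollary \ref{cor2.teo5.1.goto.generalizado} is met with $N=\Cc$, so $\textrm{Hom}_R(M,\Cc)$ is Ulrich.

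For the converse, I would exploit the symmetry of the total $\Cc$-reflexivity condition: by definition, $\textrm{Hom}_R(M,\Cc)$ is itself totally $\Cc$-reflexive. Therefore, if $\textrm{Hom}_R(M,\Cc)$ is Ulrich, applying the forward implication to it (with the roles of $M$ and $\textrm{Hom}_R(M,\Cc)$ swapped) shows that $\textrm{Hom}_R(\textrm{Hom}_R(M,\Cc),\Cc)$ is Ulrich. The biduality isomorphism then identifies this double-dual with $M$, proving that $M$ is Ulrich.

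The main obstacle is quite minor here: it amounts to recalling that a semidualizing module over a Cohen-Macaulay local ring is maximal Cohen-Macaulay, so that Corollary \ref{cor2.teo5.1.goto.generalizado} is genuinely applicable with $N=\Cc$. Once that is noted, the rest is a clean two-step argument using only the defining symmetries of total $\Cc$-reflexivity (Ext-vanishing and biduality). \qed
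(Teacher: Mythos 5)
Your argument is correct and follows essentially the same route as the paper: both reduce to Corollary \ref{cor2.teo5.1.goto.generalizado} with $N=\Cc$, using that $\Cc$ is maximal Cohen-Macaulay, that total $\Cc$-reflexivity supplies the required Ext-vanishing and the non-vanishing of $\textrm{Hom}_R(M,\Cc)$, and that biduality handles the converse. The paper merely compresses the two directions into "the result is clear," whereas you spell them out (note that for the converse you do not even need the full total $\Cc$-reflexivity of $\textrm{Hom}_R(M,\Cc)$ --- the hypotheses of Corollary \ref{cor2.teo5.1.goto.generalizado} for the pair $(\textrm{Hom}_R(M,\Cc),\Cc)$ are already among the conditions defining total $\Cc$-reflexivity of $M$).
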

\begin{proof} We have $M \cong \textrm{Hom}_{R} (\textrm{Hom}_{R} (M, \Cc),
\Cc)$, which in particular forces the module $\textrm{Hom}_{R}(M, \Cc)$ to be non-trivial, and in addition $$\textrm{Ext}_{R}^i(M,
\Cc)=0=\textrm{Ext}_{R}^i(\textrm{Hom}_{R} (M, \Cc), \Cc) \quad
\mbox{for \,all} \quad i > 0.$$  Since $\Cc$ is semidualizing, $\textrm{depth}_R\Cc
= \textrm{depth}\,R$ (see, e.g., \cite[Theorem 2.2.6(c)]{sather}) and hence $\Cc$ is
maximal Cohen-Macaulay. Now the result is clear by  Corollary
\ref{cor2.teo5.1.goto.generalizado}. \qed
\end{proof}

\medskip

Note that Corollary \ref{cor3.teo5.1.goto.generalizado} gives a different proof of the case $\Ic = \Mc$ of Corollary
\ref{cor.teo5.2.goto.generalizado} by taking $\Cc = \omega_R$. Another byproduct of Corollary
\ref{cor3.teo5.1.goto.generalizado} is the following curious characterization of regular local rings.

\begin{corollary}\label{regularity} Let $R$ be a Cohen-Macaulay local ring with a semidualizing
module $\Cc$. Then, $R$ is regular if and only if $\Cc$ is an Ulrich $R$-module.
\end{corollary}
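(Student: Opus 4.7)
The plan is to deduce the statement from Corollary \ref{cor3.teo5.1.goto.generalizado} applied to the module $M=R$. First I would verify that $R$ is totally $\Cc$-reflexive, which is immediate from the semidualizing hypothesis: the homothety isomorphism yields the biduality $R\cong \Hom_R(\Cc,\Cc)\cong \Hom_R(\Hom_R(R,\Cc),\Cc)$, while $\textrm{Ext}_R^i(R,\Cc)=0$ trivially and $\textrm{Ext}_R^i(\Hom_R(R,\Cc),\Cc)=\textrm{Ext}_R^i(\Cc,\Cc)=0$ for every $i>0$. (This is in fact one of the standard illustrations recorded just before Corollary \ref{cor3.teo5.1.goto.generalizado}.) Since $\Hom_R(R,\Cc)\cong \Cc$ canonically, the corollary yields that $R$ is an Ulrich $R$-module if and only if $\Cc$ is an Ulrich $R$-module.

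It therefore remains to identify regularity of $R$ with the Ulrich property of $R$ viewed as a module over itself. But this is classical: $R$ is always maximal Cohen-Macaulay over itself and $\nu(R)=1$, so the Ulrich condition $\nu(R)=\textrm{e}(R)$ from Definition \ref{def.ulrich.module} reduces to $\textrm{e}(R)=1$, which, for a Cohen-Macaulay local ring, is well known to be equivalent to $R$ being regular (after, if necessary, passing to the faithfully flat extension $R[X]_{\Mc R[X]}$ to guarantee an infinite residue field, one notes that any minimal reduction $Q$ of $\Mc$ satisfies $\textrm{e}(R)=\ell_R(R/Q)$, so $\textrm{e}(R)=1$ forces $Q=\Mc$). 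Chaining the two equivalences produces the claimed biconditional.

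The argument is essentially formal and presents no serious obstacle; the only items requiring a moment's thought are the verification that $M=R$ is an admissible input for Corollary \ref{cor3.teo5.1.goto.generalizado} (namely, that $R$ is trivially totally $\Cc$-reflexive as a consequence of the semidualizing hypothesis on $\Cc$) and the well-known characterization \textit{multiplicity one} $\Leftrightarrow$ \textit{regular} for Cohen-Macaulay local rings.
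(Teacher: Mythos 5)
Your proposal is correct and follows essentially the same route as the paper: apply Corollary \ref{cor3.teo5.1.goto.generalizado} with $M=R$, using that $R$ is totally $\Cc$-reflexive, and then identify ``$R$ is Ulrich over itself'' with regularity. The only difference is cosmetic --- you verify total $\Cc$-reflexivity of $R$ and the multiplicity-one characterization directly, where the paper cites \cite[Proposition 2.1.12]{sather} and \cite[Lemma 2.2]{BHU}.
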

\begin{proof} According to \cite[Proposition 2.1.12]{sather}, saying that $\Cc$ is semidualizing is tantamount to $R$ being a totally $\Cc$-reflexive $R$-module. Now, Corollary \ref{cor3.teo5.1.goto.generalizado} yields that $R$ is Ulrich over itself if and only if $\Cc$ is an Ulrich $R$-module. The former situation, as observed in \cite[Lemma 2.2]{BHU}, is equivalent to the regularity of $R$. \qed
 \end{proof}

\medskip

We raise the following question and a related remark.

\begin{question}\label{ques}\rm Does Corollary \ref{cor1.teo5.1.goto.generalizado} hold with $\Cc$ (a given semidualizing $R$-module) in place of $\omega_R$?

\end{question}

\begin{remark}\rm An affirmative answer to Question \ref{ques} would imply the validity of Corollary \ref{cor.teo5.2.goto.generalizado} with $\Cc$ in place of $\omega_R$ as well, provided that $R$ is a normal domain. Indeed, it suffices to note that in this case the maximal Cohen-Macaulay $R$-module $M$ is necessarily reflexive in the usual sense, and thus by \cite[Corollary 5.4.7]{sather} (which also requires $R$ to be normal) we have $$M \cong \textrm{Hom}_{R}(\textrm{Hom}_{R}(M, \Cc), \Cc)$$ via the natural biduality map.

\end{remark}

\subsection{Freeness criteria for $M/\Ic M$ via (co)homology vanishing} We close the section providing some criteria for the freeness of the $R/\Ic$-module $M/\Ic M$, which is of interest since this is one of the requirements for $M$ to be Ulrich with respect to $\Ic$ (see Definition \ref{def.I.urich.module}).

As we have been investigating how Ulrichness behaves under the Hom ($={\rm Ext}^0$) functor, it seems natural to wonder about the relevance of higher Ext modules in the theory, and in fact we shall see that the vanishing of finitely many ``diagonal" Ext modules ${\rm Ext}_{R/\Ic}^i(M/\Ic M, M/\Ic M)$, under suitable hypotheses, can detect freeness over the Artinian local ring $R/\Ic$ (which we will assume to be Gorenstein). Vanishing of homology modules, namely ``diagonal" Tor modules ${\rm Tor}^{R/\Ic}_j(M/\Ic M, M/\Ic M)$, will also play a role. Essentially, our criteria will consist of adaptations of some results from \cite{HSV} and one from \cite{Sega}.

In the proposition below, and as before, $(R, \Mc)$ and $\Ic$ (also $Q$, which appears in the proof) are as in Convention \ref{convention}, and $\ell_R(-)$ stands for length of $R$-modules.

\begin{proposition} Suppose $R/\Ic$ is Gorenstein {\rm (}e.g., if $R$ is Gorenstein and $\Ic$ is Ulrich; see Remark \ref{obs1.def.urich.ideal}{\rm )} and let $M$ be a finite $R$-module. Assume any one of the following situations:
\begin{itemize}
\item[(i)] ${\Mc}^2M\subset \Ic M$ and ${\rm Ext}_{R/\Ic}^i(M/\Ic M, M/\Ic M)=0$ for all $i$ satisfying $1\leq i\leq {\rm max}\{3, \nu(M), \ell_R(M/\Ic M)-\nu(M)\}$;
\item[(ii)] ${\Mc}^3\subset \Ic$ and ${\rm Ext}_{R/\Ic}^i(M/\Ic M, M/\Ic M)=0$ for some $i>0$;
\item[(iii)]  {\rm (}$R/\Ic$ not necessarily Gorenstein.{\rm )} $R/\Mc$ is infinite, $\Ic$ is not a parameter ideal, ${\Mc}^3\subset \Ic$,\, ${\rm e}_{\Ic}^0(R)\leq 2\ell_R(\Mc /({\Mc}^2+\Ic))$, and ${\rm Tor}^{R/\Ic}_j(M/\Ic M, M/\Ic M)=0$ for three consecutive values of $j\geq 2$;
\item[(iv)] ${\Mc}^4\subset \Ic$, there exists $x\in \Mc \setminus \Ic$ such that the ideal $(\Ic \colon x)/\Ic$ is principal, and ${\rm Tor}^{R/\Ic}_j(M/\Ic M, M/\Ic M)=0$ for all $j\gg 0$.
\end{itemize}
Then,  $M/\Ic M$ is $R/\Ic$-free.
\end{proposition}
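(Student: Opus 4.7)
The plan is to reduce every assertion to a statement about the Artinian local ring $\overline{R} := R/\Ic$, whose maximal ideal is $\fn := \Mc/\Ic$, applied to the module $\overline{M} := M/\Ic M$, and then to invoke, case by case, the appropriate freeness criterion from \cite{HSV} or \cite{Sega}. The common preliminary step is the observation that the Ext and Tor vanishings hypothesized in each item are already formulated over $\overline{R}$, so no base change is needed for those; what requires translation are the numerical and ideal-theoretic assumptions on $R$, $M$, $\Ic$, which I would re-express as intrinsic properties of $\overline{R}$ and $\overline{M}$.

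For case (i), the inclusion $\Mc^2 M \subset \Ic M$ is equivalent to $\fn^2 \overline{M} = 0$, while $\nu_R(M) = \nu_{\overline{R}}(\overline{M})$ and $\ell_R(M/\Ic M) = \ell_{\overline{R}}(\overline{M})$; combined with $\overline{R}$ Gorenstein Artinian, the bound on $i$ matches verbatim that of the HSV criterion for modules killed by the square of the maximal ideal over a Gorenstein Artin local ring. For case (ii), $\Mc^3 \subset \Ic$ gives $\fn^3 = 0$ in $\overline{R}$, and one applies the stronger HSV result in that setting, which requires only one positive Ext vanishing $\mathrm{Ext}^i_{\overline{R}}(\overline{M},\overline{M}) = 0$ to force freeness.

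For case (iii), I would first translate the arithmetic hypothesis: $\ell_R(\Mc/\Mc^2 + \Ic) = \nu_{\overline{R}}(\fn)$, and the inequality $\mathrm{e}^0_{\Ic}(R) \le 2\ell_R(\Mc/\Mc^2+\Ic)$ combined with $\Ic$ not being a parameter ideal constrains the Hilbert function of $\overline{R}$; together with $\fn^3 = 0$ and $\overline{R}/\fn$ infinite, these are the hypotheses of the HSV Tor-rigidity criterion, and three consecutive vanishings of $\mathrm{Tor}^{\overline{R}}_j(\overline{M},\overline{M})$ then give freeness. For case (iv), $\Mc^4 \subset \Ic$ gives $\fn^4 = 0$; moreover, the element $x \in \Mc \setminus \Ic$ with $(\Ic \colon x)/\Ic$ principal descends to $\overline{x} \in \fn$ with $(0 \colon_{\overline{R}} \overline{x})$ a principal ideal of $\overline{R}$, so $\overline{x}$ is ``half'' of an exact zero-divisor in the sense of \cite{Sega}, and together with asymptotic vanishing of $\mathrm{Tor}^{\overline{R}}_j(\overline{M},\overline{M})$ this is precisely the setup of Sega's freeness theorem.

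The main obstacle will be item (iii): making sure that the numerical assumption on $\mathrm{e}^0_{\Ic}(R)$ and $\ell_R(\Mc/\Mc^2+\Ic)$ faithfully encodes the Hilbert-series constraint required by the HSV Tor-rigidity result over $\overline{R}$, and that ``three consecutive'' Tor vanishings are enough in the non-Gorenstein regime. Once these translations are checked, each of the four conclusions is a direct citation of the relevant Artinian freeness theorem.
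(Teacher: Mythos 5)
Your proposal follows essentially the same route as the paper: pass to the Artinian quotient $\overline{R}=R/\Ic$ and the module $\overline{M}=M/\Ic M$, translate the hypotheses (in (i) computing $\nu(\overline{\Mc}\,\overline{M})=\ell_R(M/\Ic M)-\nu(M)$ from $\overline{\Mc}^2\overline{M}=0$; in (iii) bounding $\ell_R(\overline{R})$ via ${\rm e}^0_{\Ic}(R)=\ell_R(R/Q)\geq \ell_R(R/\Ic)+1$ since $Q\subsetneq\Ic$), and then cite the corresponding freeness criteria of Huneke--\c{S}ega--Vraciu and \c{S}ega, using Gorensteinness of $R/\Ic$ to upgrade ``free or injective'' to ``free'' in case (i). The one step you flag as an obstacle in (iii) is exactly the small Loewy-length case analysis and inequality check the paper carries out, so the approaches coincide.
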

\begin{proof} For simplicity, set $\overline{R}=R/\Ic$, $\overline{\Mc}=\Mc/\Ic$, and $\overline{M}=M/\Ic M$. Let us assume (i). By assumption ${\overline{\Mc}}^2{\overline{M}}=0$, hence $$\nu({\overline{\Mc}}\,{\overline{M}})=\ell_R({\overline{\Mc}}\,{\overline{M}})=\ell_R(\Mc M/\Ic M).$$ On the other hand, by the short exact sequence $$0 \longrightarrow \Mc M/\Ic M \longrightarrow M/\Ic M \longrightarrow M/\Mc M \longrightarrow 0$$ we have $\ell_R(\Mc M/\Ic M)=\ell_R(M/\Ic M)-\ell_R(M/\Mc M)$. Therefore we obtain $\nu({\overline{\Mc}}\,{\overline{M}})=\ell_R(M/\Ic M)-\nu(M)$. In addition it is clear that $\nu(\overline{M})=\nu(M)$. Now we can apply  \cite[Proposition 4.4(1)]{HSV}, which ensures that the $R/\Ic$-module $M/\Ic M$ is either free or injective. Since $R/\Ic$ is Gorenstein, $M/\Ic M$ is necessarily free, as needed.

Assume that (ii) holds. Notice that ${\overline{\Mc}}^3=0$ by hypothesis. Now, since $R/\Ic$ is Gorenstein, the freeness of $M/\Ic M$ follows readily by \cite[Theorem 4.1(2)]{HSV}. 

Now suppose (iii). Let $\ell \ell (\overline{R})$ denote the {\it Loewy length} of $\overline{R}$, which is the smallest integer $n$ such that $\overline{\Mc}^n=0$, i.e., ${\Mc}^n\subset \Ic$. Thus, by assumption, $\ell \ell (\overline{R})\leq 3$. If $\ell \ell (\overline{R})=1$ (i.e., $\Ic =\Mc$), there is nothing to prove. If $\ell \ell (\overline{R})=2$, then $M/\Ic M$ is free by \cite[Remark 2.1]{HSV}. So we can assume $\ell \ell (\overline{R})=3$. Using Remark \ref{obs.I.urich.module}(i) and the hypothesis that $\Ic$ is not a parameter ideal (so that the inclusion $Q\subset \Ic$ is strict), we get
${\rm e}^0_{\Ic}(R) = \ell_R(R/Q) \geq \ell_R(R/\Ic) + 1$. Therefore, we can write $$2\nu(\overline{\Mc})=2\ell_R(\Mc /({\Mc}^2+\Ic))\geq {\rm e}^0_{\Ic}(R)\geq \ell_R(\overline{R}) + 1=\ell_R(\overline{R}) - \ell \ell (\overline{R}) + 4.$$ Now we are in a position to apply \cite[Theorem 3.1(2)]{HSV} to conclude that $M/\Ic M$ is free.

Finally, suppose (iv). So $R/\Ic$ is Gorenstein and $\overline{\Mc}^4=0$, and in addition note that $(\Ic \colon x)/\Ic$ is the annihilator of $x\overline{R}$. Then  $M/\Ic M$ is free by \cite[Theorem 3.3]{Sega}. \qed
\end{proof}

\begin{remark}\rm From the proof in the situation (iii) it is clear that, for general $\Ic$ (i.e., possibly a parameter ideal), the hypothesis on the multiplicity must be replaced with ${\rm e}_{\Ic}^0(R)\leq 2\ell_R(\Mc /({\Mc}^2+\Ic))-1$. 

\end{remark}

\section{Horizontal linkage and the Ulrich property} \label{sec4}

We begin this section by pointing out the warming-up fact that, if the local ring $R$ is Gorenstein, then it follows from \cite[Theorem
1]{link2004} that every stable Ulrich $R$-module with respect to $\Ic$ -- where $\Ic$ is as in Convention \ref{convention} -- is horizontally linked (note that maximal Cohen-Macaulay modules are precisely the totally reflexive modules, since
$R$ is Gorenstein). See Subsection \ref{linkage-subsect} for terminology. 

In essence, our goal herein is to develop a further study of linkage of Ulrich modules with respect to $\Ic$ (also assumed to be Ulrich but not a parameter ideal), the main result being the theorem below, which in particular shows that the operation of horizontal linkage over a
Gorenstein local ring preserves the Ulrich property with respect to
$\Ic$ for horizontally linked modules.

\begin{theorem}\label{teo.link.ulrich}
Let $(R, \Mc)$ be a Cohen-Macaulay local ring of
dimension $d$, and suppose the ideal $\Ic$ is Ulrich but not a parameter ideal. Consider the following
assertions:
\begin{itemize}
\item[(i)] $R$ is Gorenstein;
\item[(ii)] $M$ is Ulrich with respect to $\Ic$ if and only if $\lambda M$ is Ulrich with
respect to $\Ic$, whenever $M$ is a horizontally linked $R$-module;
\item[(iii)] $\lambda M$ is maximal Cohen-Macaulay, whenever $M$ is a horizontally linked $R$-module which is Ulrich with respect to $\Ic$;
\item[(iv)] $\textrm{\emph{Ext}}_{R}^{d+2}(R/\Ic,R)=0$.
\end{itemize}
Then the following statements hold:
\begin{enumerate}
\item[(a)] {\rm (i)} $\Rightarrow$ {\rm (ii)} $\Rightarrow$ {\rm (iii)};
\item[(b)] If $d\geq 2$, then {\rm (iii)} $\Rightarrow$ {\rm (iv)};
\item[(c)] If $d\geq 2$ and $\Ic = \Mc$, then all the four conditions above are equivalent.
\end{enumerate}
\end{theorem}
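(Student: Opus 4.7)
The plan is to handle parts (a), (b), (c) in turn, with the main technical work in part (a).

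For (i) $\Rightarrow$ (ii), Gorensteinness of $R$ together with the hypothesis that $\Ic$ is Ulrich implies (via Remark~\ref{obs1.def.urich.ideal}) that $\Ic$ is Gorenstein. For a horizontally linked (hence stable, MCM) module $M$, dualizing the minimal presentation $0 \to \Omega M \to F \to M \to 0$ is exact because $\textrm{Ext}_R^1(M,R)=0$, and comparing with the defining presentation of $\textrm{Tr}\,M$ identifies $\lambda M \cong (\Omega M)^*$ (and symmetrically $(\lambda M)^* \cong \Omega M$). Since $\Ic$ is Gorenstein, Corollary~\ref{cor.teo5.2.goto.generalizado} says $(-)^*$ preserves the Ulrich property on MCM modules, so (ii) reduces to the biconditional ``$M$ is Ulrich iff $\Omega M$ is Ulrich.''

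For the forward direction of this biconditional, assume $M$ is Ulrich. The equal-length surjection $F/\Ic F \twoheadrightarrow M/\Ic M \cong (R/\Ic)^{\nu(M)}$ is then an isomorphism, forcing $\Omega M \subseteq \Ic F$. Thus $\Ic\Omega M \subseteq \Ic^2 F = Q\Ic F \subseteq QF$, and Koszul acyclicity on the MCM module $\Omega M$ gives $\Omega M \cap QF = Q\Omega M$, yielding $\Ic\Omega M = Q\Omega M$. For the freeness of $\Omega M/\Ic\Omega M$, note that the regular-sequence property forces the images of $x_1,\ldots,x_d$ in the $R/\Ic$-free module $\Ic/\Ic^2$ to be linearly independent, so the exact sequence $0 \to Q/Q\Ic \to \Ic/\Ic^2 \to \Ic/Q \to 0$ splits and $\Ic/Q \cong (R/\Ic)^{\nu(\Ic)-d}$ is $R/\Ic$-free. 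The natural injection $\Omega M/Q\Omega M \hookrightarrow \Ic F/QF \cong (\Ic/Q)^{\nu(M)}$ is then an isomorphism by a length count (multiplicity additivity on $0 \to \Omega M \to F \to M \to 0$ combined with $e_\Ic(M) = \nu(M)\ell(R/\Ic)$ gives $e_\Ic(\Omega M) = \nu(M)(\nu(\Ic)-d)\ell(R/\Ic)$, matching the target). Hence $\Omega M$ is Ulrich. The backward direction follows by applying the forward direction to the horizontally linked $\lambda M$ (Lemma~\ref{prop4.link}): $\Omega(\lambda M) \cong M^*$ is then Ulrich, and Corollary~\ref{cor.teo5.2.goto.generalizado} gives $M \cong M^{**}$ Ulrich. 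The implication (ii) $\Rightarrow$ (iii) is immediate since Ulrich modules are MCM.

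For (b), assuming $d \geq 2$, take $M := \Omega^{d+1}(R/\Ic)$, which is Ulrich with respect to $\Ic$ by Remark~\ref{obs.I.urich.module}(iii) and horizontally linked as a stable syzygy (Lemma~\ref{teo2.link}). By (iii), $\lambda M$ is MCM. From the minimal presentation $F_{d+2} \to F_{d+1} \to M \to 0$ coming from the minimal resolution of $R/\Ic$, we obtain $\textrm{Tr}\,M = \textrm{coker}(F_{d+1}^* \to F_{d+2}^*)$ and the short exact sequence $0 \to \lambda M \to F_{d+2}^* \to \textrm{Tr}\,M \to 0$; the depth lemma yields $\textrm{depth}(\textrm{Tr}\,M) \geq d-1 \geq 1$. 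On the other hand, $\textrm{Ext}_R^{d+2}(R/\Ic,R) = \ker(F_{d+2}^* \to F_{d+3}^*)/\lambda M$ embeds as a submodule of $\textrm{Tr}\,M$, and has finite length (annihilated by $\Ic$ and supported at $\Mc$). If nonzero, its socle would furnish a socle element of $\textrm{Tr}\,M$, contradicting $\textrm{depth}(\textrm{Tr}\,M) \geq 1$; hence $\textrm{Ext}_R^{d+2}(R/\Ic,R) = 0$.

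For (c), only (iv) $\Rightarrow$ (i) remains. The hypothesis asserts the vanishing of the Bass number $\mu_{d+2}(R) = \dim_k \textrm{Ext}_R^{d+2}(k,R)$. Passing to the $\Mc$-adic completion $\widehat R$ (Gorensteinness and Bass numbers descend/ascend accordingly) to secure the existence of a canonical module, the local-duality identity $\mu_{d+i}(R) = \beta_i^R(\omega_R)$ gives $\beta_2(\omega_R) = 0$, so $\textrm{pd}_R(\omega_R) \leq 1$. As $\omega_R$ is MCM, Auslander--Buchsbaum forces $\textrm{pd}(\omega_R) = 0$; hence $\omega_R$ is free, which (being of rank one) makes it isomorphic to $R$, so $R$ is Gorenstein. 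The main technical obstacle lies in the forward direction of (i) $\Rightarrow$ (ii), specifically the length-based argument identifying $\Omega M/\Ic\Omega M$ with $(\Ic/Q)^{\nu(M)}$; the remaining implications are formal once this and the Gorenstein-duality identification $\lambda M \cong (\Omega M)^*$ are in place.
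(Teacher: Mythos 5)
Your overall architecture is sound and diverges from the paper's in an interesting way, chiefly in part (a). The paper proves (i) $\Rightarrow$ (ii) by citing \cite[Corollary 5.3]{goto2014} twice: for a stable Ulrich module, ${\rm Tr}\,M$ is Ulrich, and the syzygy of a stable Ulrich module is Ulrich, so $\lambda M=\Omega{\rm Tr}\,M$ is Ulrich; the converse is then Lemma \ref{prop4.link} plus $M\cong\lambda^2M$. You instead identify $\lambda M\cong(\Omega M)^*$ via Gorenstein duality (valid since ${\rm Ext}^1_R(M,R)=0$ for MCM $M$), invoke Corollary \ref{cor.teo5.2.goto.generalizado} to trade $(-)^*$ for nothing, and then reprove from scratch that $\Omega M$ is Ulrich when $M$ is — your containment $\Omega M\subset\Ic F$, the equality $\Ic\Omega M=Q\Omega M$ via ${\rm Tor}_1(M,R/Q)=0$, and the length count inside $(\Ic/Q)^{\nu(M)}$ are all correct and essentially reconstruct the goto2014 ingredient rather than citing it; this buys self-containedness at the cost of length. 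Your part (b) is a streamlined version of the paper's: you read off the embedding ${\rm Ext}^{d+2}_R(R/\Ic,R)\hookrightarrow{\rm Tr}\,M$ directly from the dualized complex, whereas the paper routes through ${\rm Tr}\,{\rm Tr}\,N\cong N$ and the Auslander exact sequence (\ref{eq.seq.exact}); both then contradict ${\rm depth}\,{\rm Tr}\,M\geq d-1\geq1$. Your part (c) replaces the paper's one-line citation of \cite[Theorem 18.1]{matsumura} with the Bass-number/Betti-number identity $\mu_{d+i}(R)=\beta_i(\omega_{\widehat R})$; this is correct but heavier than necessary.

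Two assertions need justification before the argument is complete. First, in part (b) you declare $\Omega^{d+1}(R/\Ic)$ to be a \emph{stable} syzygy without proof; stability is not automatic (a syzygy module can have free summands), and the paper spends several lines on exactly this point, invoking \cite[Corollary 1.2.5]{avramov96} to show that a free summand of $\Omega^{d+1}(R/\Ic)$ would force $d+1\leq d$. Second, the freeness of $\Ic/Q$ over $R/\Ic$ is not a consequence of mere $R/\Ic$-linear independence of the images of $x_1,\ldots,x_d$ in $\Ic/\Ic^2$: to split off a free summand of a free module over the local ring $R/\Ic$ you need the $x_i$ to be part of a \emph{minimal} generating set of $\Ic$, i.e.\ independence modulo $\Mc\Ic$, which holds because $Q$ is a minimal reduction (this is \cite[Lemma 2.3]{goto2014}); as written, your sentence elides the real content of that step.
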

\begin{proof}
(a) (i) $\Rightarrow$ (ii). Let $M$ be a
horizontally linked $R$-module. By Lemma \ref{teo2.link}, $M$ is
a stable $R$-module. Assume that $M$ is an Ulrich $R$-module with respect to $\Ic$. By \cite[Corollary 5.3]{goto2014},
the Auslander transpose $\textrm{Tr} M$ is Ulrich with respect to $\Ic$. Moreover, since $M$ is stable, we obtain by \cite[Theorem
32.13]{fuller} that
$\textrm{Tr} M$ is stable as well. Applying \cite[Corollary 5.3]{goto2014} we conclude that the syzygy module $\Omega \textrm{Tr} M=\lambda M$ is Ulrich with respect to $\Ic$. Now, to see the converse, it suffices to apply Lemma \ref{prop4.link} to the module $\lambda M$ and to use that $M\cong \lambda^2 M$. Notice that (ii) $\Rightarrow$ (iii) is obvious. This concludes the proof of (a).

\medskip

(b) (iii) $\Rightarrow$ (iv). Let $\overline{R} = R/\Ic$, and assume contrarily that $\textrm{Ext}_{R}^{d+2}(\overline{R}, R)\neq 0$. First notice that $\Omega^{d+1}\overline{R}$ is stable, otherwise $R$ would be a direct summand of $\Omega^{d+1}\overline{R}$ and then, by \cite[Corollary
1.2.5]{avramov96}, $$d+1 \leq \max \{ 0, \textrm{depth}\,R - \textrm{depth}_R\overline{R} \} = d - \textrm{depth}_R\overline{R},$$
which is absurd. Now, by Lemma \ref{teo2.link},
$\Omega^{d+1}\overline{R}$ is a horizontally linked $R$-module. By
\cite[Theorem 3.2]{goto2014}, $\Omega^{d+1}\overline{R}$ is an
Ulrich $R$-module with respect to $\Ic$. It follows from
the assumption of (iii) that $\lambda\Omega^{d+1}\overline{R}$ is a maximal
Cohen-Macaulay $R$-module, which in turn fits into a short exact
sequence
$$0 \longrightarrow \lambda\Omega^{d+1}\overline{R} \longrightarrow
F \longrightarrow  \textrm{Tr} \Omega^{d+1}\overline{R} \longrightarrow
0$$ for some free $R$-module $F$. By \cite[Proposition
1.2.9]{CMr}, we get
\begin{equation}\label{positive-depth}
\textrm{depth}_R\textrm{Tr} \Omega^{d+1}\overline{R} \geq
\min \{ \textrm{depth}_RF, \textrm{depth}_R\lambda\Omega^{d+1}\overline{R} - 1 \} = d-1 > 0.
\end{equation}
Using (\ref{eq.seq.exact}), there is an exact sequence $$0
\rightarrow \textrm{Ext}_{R}^{1}(\textrm{Tr}\textrm{Tr}
\Omega^{d+1}\overline{R}, R) \rightarrow \textrm{Tr}
\Omega^{d+1}\overline{R} \rightarrow (\textrm{Tr}
\Omega^{d+1}\overline{R})^{**} \rightarrow
\textrm{Ext}_{R}^{2}(\textrm{Tr}\textrm{Tr}
\Omega^{d+1}\overline{R}, R) \rightarrow 0$$ and since
$\Omega^{d+1}\overline{R}$ is stable, we have $\textrm{Tr}\textrm{Tr}
\Omega^{d+1}\overline{R} \cong \Omega^{d+1}\overline{R}$ by
\cite[Corollary 32.14(4)]{fuller}. Thus, we obtain the exact sequence
\begin{equation}\label{eq2.teo.link.ulrich}
0 \longrightarrow \textrm{Ext}_{R}^{d+2}(\overline{R}, R) \longrightarrow
\textrm{Tr} \Omega^{d+1}\overline{R} \longrightarrow (\textrm{Tr}
\Omega^{d+1}\overline{R})^{**} \longrightarrow
\textrm{Ext}_{R}^{d+3}(\overline{R}, R) \longrightarrow 0.
\end{equation}
As $\Ic$ is $\Mc$-primary, the non-zero module $\textrm{Ext}_{R}^{d+2}(\overline{R}, R)$ must have finite length, which in particular implies $\textrm{depth}_R\textrm{Ext}_{R}^{d+2}(\overline{R}, R) = 0$. On the other hand, by virtue of (\ref{positive-depth}) and
(\ref{eq2.teo.link.ulrich}), we get  $\textrm{depth}_R\textrm{Ext}_{R}^{d+2}(\overline{R}, R)>0$, a contradiction.

\medskip

(c) (iv) $\Rightarrow$ (i). If $\textrm{Ext}_{R}^{d+2}(R/\Mc, R)=0$ then, by \cite[Theorem
18.1]{matsumura}, the local ring $R$ is Gorenstein. \qed
\end{proof}

\medskip

In order to provide the first application of our theorem, we invoke the following classical concept.

\begin{definition}\label{min-mult}\rm A $d$-dimensional Cohen-Macaulay local ring $R$ is said to have \textit{minimal multiplicity} if its multiplicity and embedding dimension are related by
$\textrm{e}(R) = \textrm{edim}\,R - d+ 1$. As is well-known, there is in general an inequality $\textrm{e}(R) \geq \textrm{edim}\,R - d+ 1$, which originates the terminology.
\end{definition}

Now recall that a local ring $R$ is a {\it hypersurface} ring if $R\cong S/(f)$, where $(S, \Nc)$ is a regular local ring and $f\in {\Nc}$. Such a ring is said to be a {\it quadratic} hypersurface ring if $f\in {\Nc}^2\setminus {\Nc}^3$.
Clearly, a hypersurface ring $R\cong S/(f)$ with $f\in {\Nc}^2$ is quadratic if and only if $R$ has minimal multiplicity (equal to 2). 

Our Theorem \ref{teo.link.ulrich} yields a characterization of quadratic hypersurface rings in terms of linkage of Ulrich modules in the classical sense, i.e., in the case $\Ic=\Mc$. It is worth recalling an interesting connection (which we shall use in the proof of Corollary \ref{cor.I.Ulrich.preservada.por.LH}) between quadratic hypersurface rings and the Ulrich property, to wit, every non-free maximal Cohen-Macaulay module over such a ring is a direct sum of an Ulrich module and a free module (see \cite[Corollary 1.4]{H-K}); in particular, any such ring admits an Ulrich module.

\begin{corollary}\label{Hyper-charact} Let $R$ be a non-regular Cohen-Macaulay local ring of minimal multiplicity with dimension $d\geq 2$ and infinite residue field $k$. The following assertions are equivalent:
\begin{itemize}
\item[(i)] $R$ is a $($quadratic$)$ hypersurface ring;
\item[(ii)] $M$ is Ulrich if and only if $\lambda M$ is Ulrich, whenever $M$ is a horizontally linked $R$-module;
\item[(iii)] $\lambda M$ is maximal Cohen-Macaulay, whenever $M$ is a horizontally linked Ulrich $R$-module;
\item[(iv)] $\textrm{\emph{Ext}}_{R}^{d+2}(k, R)=0$.
\end{itemize}
\end{corollary}
\begin{proof} As before let $\Mc$ be the maximal ideal of $R$. Since $R/\Mc$ is infinite, it is well-known that $R$ has minimal multiplicity if and only if $${\Mc}^2 = ({\bf x}){\Mc}$$ with ${\bf x}$ an $R$-sequence (see \cite[Exercise 4.6.14]{CMr}), which in turn means that $\Mc$ is an Ulrich ideal in the sense of Definition \ref{def.urich.ideal}. Since $R$ is non-regular, $\Mc$ is not a parameter ideal. Therefore, as every hypersurface ring is Gorenstein, the implications {\rm (i)} $\Rightarrow$ {\rm (ii)} $\Rightarrow$ {\rm (iii)} $\Rightarrow$ {\rm (iv)} follow readily by Theorem \ref{teo.link.ulrich} with $\Ic=\Mc$. Now, as recalled in the proof of the theorem, condition (iv) forces $R$ to be Gorenstein. But it is well-known that a Gorenstein local ring having minimal multiplicity is just a quadratic hypersurface ring, as needed. \qed
\end{proof}

\medskip

Connections between a more general notion of minimal multiplicity
and the Ulrich property with respect to $\Ic$ will be given in Section \ref{sec5}.

Before establishing another consequence of Theorem \ref{teo.link.ulrich} (over Gorenstein local rings), we invoke an auxiliary invariant which will be used in the proof, namely, the {\it Gorenstein dimension} of a finite $R$-module $M$, which is denoted G-$\textrm{dim}_RM$ (for the definition, see, e.g., \cite[Definition 1.2.3]{GD}). Recall that if $R$ is Gorenstein then G-$\textrm{dim}_RM<\infty$ for every finite $R$-module $M$. If $R$ is local and $M$ is a finite $R$-module with G-$\textrm{dim}_RM<\infty$ then the so-called Auslander-Bridger formula states that G-$\textrm{dim}_RM={\rm depth}\,R-{\rm depth}_RM$. In particular, if $R$ is Gorenstein, then G-$\textrm{dim}_RM=0$ if and only if $M$ is maximal Cohen-Macaulay. For details, see \cite{AB}, also \cite{GD}.

\begin{corollary}\label{prop.I.Ulrich.preservada.por.LH}
Let $R$ be a Gorenstein local ring, and suppose the ideal $\Ic$ is
Ulrich but not a parameter ideal. Let $M$ be
a stable maximal Cohen-Macaulay $R$-module. Then, $M$ is an Ulrich
$R$-module with respect to $\Ic$ if and only if $\lambda
M$ is an Ulrich $R$-module with respect to $\Ic$.
\end{corollary}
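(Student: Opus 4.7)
The plan is to reduce the corollary to Theorem \ref{teo.link.ulrich}(a), whose implication (i) $\Rightarrow$ (ii) already gives the biconditional for horizontally linked modules over a Gorenstein local ring. So the only thing I need to verify is that the hypotheses on $M$ (stable and maximal Cohen-Macaulay) force $M$ to be horizontally linked in this Gorenstein setup.

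First I would invoke Lemma \ref{teo2.link}, which characterizes horizontally linked modules as those that are simultaneously stable and satisfy $\textrm{Ext}_R^1(\textrm{Tr}\,M, R)=0$. Stability is one of the standing hypotheses. For the Ext-vanishing, I would use the Auslander-Bridger theory recalled just before the statement: since $R$ is Gorenstein, every finite $R$-module has finite Gorenstein dimension, and by the Auslander-Bridger formula $\text{G-}\dim_R M=\textrm{depth}\,R-\textrm{depth}_R M=0$, because $M$ is maximal Cohen-Macaulay. Total reflexivity (i.e., $\text{G-}\dim_R M=0$) is equivalent to $M\cong M^{**}$ together with $\textrm{Ext}_R^i(M, R)=0=\textrm{Ext}_R^i(M^*, R)$ for all $i>0$. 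Invoking the exact sequence (\ref{eq.seq.exact}) with $M$ replaced by $\textrm{Tr}\,M$ (and using $\textrm{Tr}\,\textrm{Tr}\,M$ being stably isomorphic to $M$), the identity $(\textrm{Tr}\,M)^*\cong \Omega^2 M\,(\oplus\text{ free})$ shows that $\textrm{Ext}_R^1(\textrm{Tr}\,M, R)\cong \ker(M\to M^{**})=0$ (alternatively, one may observe directly from the defining exact sequence of $\textrm{Tr}\,M$ that $\textrm{Ext}_R^i(\textrm{Tr}\,M, R)=0$ for $i=1,2$ amounts exactly to $M\cong M^{**}$ with $\textrm{Ext}_R^i(M,R)$ playing no obstruction role). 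Either way, $\textrm{Ext}_R^1(\textrm{Tr}\,M, R)=0$ follows.

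Having established that $M$ is horizontally linked, the hypotheses of Theorem \ref{teo.link.ulrich}(a) are met: $R$ is Gorenstein (assertion (i) of that theorem) and $\Ic$ is Ulrich but not a parameter ideal (as assumed in the ambient theorem's statement). The implication (i) $\Rightarrow$ (ii) therefore yields that $M$ is Ulrich with respect to $\Ic$ if and only if $\lambda M$ is Ulrich with respect to $\Ic$, which is precisely the conclusion desired.

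There is really no substantial obstacle; the content is entirely in recognizing that a stable maximal Cohen-Macaulay module over a Gorenstein local ring is automatically horizontally linked, after which the corollary is an immediate specialization of Theorem \ref{teo.link.ulrich}. The only point requiring some care is to cite the correct Ext-vanishing property of totally reflexive modules (which is built into the Gorenstein dimension zero condition granted by Auslander-Bridger), so that Lemma \ref{teo2.link} applies cleanly.
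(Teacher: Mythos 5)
Your proposal is correct and takes essentially the same route as the paper: both reduce the corollary to Theorem \ref{teo.link.ulrich}(a) after observing that a stable maximal Cohen--Macaulay module over a Gorenstein local ring has Gorenstein dimension zero and is therefore horizontally linked. The only cosmetic difference is that the paper obtains the horizontal linkage by citing \cite[Theorem 1]{link2004} directly, whereas you unpack it via Lemma \ref{teo2.link} together with the exact sequence (\ref{eq.seq.exact}) and the reflexivity of $M$.
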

\begin{proof}
Since $R$ is Gorenstein and $M$ is maximal Cohen-Macaulay, then as observed above we have  G-$\textrm{dim}_RM=0$. By \cite[Theorem 1]{link2004}, $M$ is horizontally linked.
Now the result follows from Theorem \ref{teo.link.ulrich}(a). \qed
\end{proof}

\begin{corollary}\label{cor.I.Ulrich.preservada.por.LH} Let $R$ be quadratic hypersurface local ring with infinite residue field, and let $M$ be a stable maximal Cohen-Macaulay $R$-module. Then, $\lambda M$ is an Ulrich $R$-module.
\end{corollary}
\begin{proof} Over such a ring, any maximal Cohen-Macaulay module $M$ is either free or satisfies $$M\cong U\oplus F,$$ for some Ulrich module $U$ and free module $F$, according to \cite[Corollary 1.4]{H-K}. Thus, if in addition $M$ is stable (in particular, non-free), then it must be Ulrich. Also note the maximal ideal $\Mc$ of $R$ is Ulrich but not a parameter ideal. Now we can apply Corollary \ref{prop.I.Ulrich.preservada.por.LH} with $\Ic = \Mc$ to get the result. \qed \end{proof}

\medskip

Before giving more consequences of Corollary \ref{prop.I.Ulrich.preservada.por.LH}, we recall a useful lemma.

\begin{lemma}{\rm (\cite[Lemma 1.2]{H-K})}\label{stable} Let $R$ be a Gorenstein local ring. If $M$ is a maximal Cohen-Macaulay $R$-module, then $\Omega M$ is a stable $R$-module. 
\end{lemma}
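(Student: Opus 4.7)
The plan is to argue by contradiction: I would extract a hypothetical free summand from $\Omega M$ and use the Ext-vanishing afforded by the Gorenstein hypothesis to violate the minimality of the presentation defining $\Omega M$.

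Concretely, the first step is to fix a minimal free presentation $0 \to \Omega M \xrightarrow{\iota} F_0 \to M \to 0$; by minimality one has $\iota(\Omega M) \subseteq \Mc F_0$. Supposing, toward a contradiction, that $\Omega M \cong R \oplus N$ for some $R$-module $N$, I would let $p \colon \Omega M \to R$ be the projection onto the free summand and pick $e \in \Omega M$ corresponding to $(1,0)$, so that $p(e)=1$. Applying $\textrm{Hom}_R(-,R)$ to the presentation gives the exact sequence $F_0^{*} \to (\Omega M)^{*} \to \textrm{Ext}_R^1(M,R) \to 0$, and since $M$ is maximal Cohen-Macaulay over the Gorenstein ring $R$ one has $\textrm{Ext}_R^i(M,R)=0$ for every $i \geq 1$ (i.e.\ $M$ is totally reflexive); in particular $p$ lifts to an $R$-linear map $\tilde{p} \colon F_0 \to R$ with $\tilde{p}\circ \iota = p$.

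To close the argument, I would evaluate at $e$: on the one hand $\tilde{p}(\iota(e)) = p(e) = 1$, while on the other $\iota(e) \in \Mc F_0$ together with $R$-linearity of $\tilde{p}$ force $\tilde{p}(\iota(e)) \in \Mc$, contradicting $1 \notin \Mc$. The only ingredient genuinely using the Gorenstein hypothesis is the vanishing $\textrm{Ext}_R^1(M,R)=0$, which is precisely what enables the lift $\tilde{p}$ to exist; this is the step I expect to be the main (though mild) technical point, the remainder being a direct consequence of the definition of a minimal free presentation.
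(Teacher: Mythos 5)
Your argument is correct and is essentially the standard proof of the cited result: the paper itself gives no proof of this lemma (it is quoted from Herzog--K\"uhl), and the classical argument is exactly the one you give, namely that a free summand of $\Omega M \subseteq \Mc F_0$ would split off via a lift of the projection $p\colon \Omega M \to R$ to $F_0$, the lift existing because $\textrm{Ext}_R^1(M,R)=0$ for a maximal Cohen--Macaulay module over a Gorenstein local ring (Lemma \ref{teo.canonical.module}(ii) with $\omega_R\cong R$). No gaps.
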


\begin{corollary}\label{lambda-syz} Let $R$ be a Gorenstein local ring of dimension $d$, and suppose the ideal $\Ic$ is Ulrich but not a parameter ideal. Then, $\lambda (\Omega^k \Ic)$ is an Ulrich $R$-module with respect to $\Ic$ for all $k\geq d$.
\end{corollary}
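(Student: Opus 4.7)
The plan is to reduce the claim to Corollary \ref{prop.I.Ulrich.preservada.por.LH} by showing that $\Omega^k \Ic$ is a stable maximal Cohen-Macaulay $R$-module which is Ulrich with respect to $\Ic$. The only ingredients I will need beyond that corollary are Remark \ref{obs.I.urich.module}(iii) (which handles high syzygies of $R/\Ic$) and Lemma \ref{stable} (which upgrades ``MCM'' to ``stable'' under one application of $\Omega$).

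First, from the short exact sequence $0 \rightarrow \Ic \rightarrow R \rightarrow R/\Ic \rightarrow 0$, and using that $\Ic \subset \Mc$ has no non-zero free direct summand, I identify $\Ic \cong \Omega(R/\Ic)$ via a minimal free presentation of $R/\Ic$, so that $\Omega^k \Ic \cong \Omega^{k+1}(R/\Ic)$ for every $k \geq 0$. Since $\Ic$ is Ulrich but not a parameter ideal, Remark \ref{obs.I.urich.module}(iii) asserts that $\Omega^j(R/\Ic)$ is Ulrich with respect to $\Ic$ for every $j \geq d$. For $k \geq d$, this applies to both $j=k$ and $j=k+1$, so $\Omega^{k-1}\Ic \cong \Omega^{k}(R/\Ic)$ and $\Omega^{k}\Ic \cong \Omega^{k+1}(R/\Ic)$ are both Ulrich with respect to $\Ic$, and in particular both are maximal Cohen-Macaulay.

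To obtain stability, I apply Lemma \ref{stable} to the maximal Cohen-Macaulay module $N := \Omega^{k-1}\Ic$ over the Gorenstein ring $R$; this gives that $\Omega N = \Omega^k \Ic$ is stable. Having established that $\Omega^k \Ic$ is a stable maximal Cohen-Macaulay $R$-module that is Ulrich with respect to $\Ic$, Corollary \ref{prop.I.Ulrich.preservada.por.LH} yields that $\lambda(\Omega^k \Ic)$ is Ulrich with respect to $\Ic$, as claimed. The argument is essentially a direct assembly of the cited results; there is no serious obstacle, only the bookkeeping of syzygy indices, where the constraint $k \geq d$ is precisely what is needed so that Remark \ref{obs.I.urich.module}(iii) simultaneously guarantees that $\Omega^{k-1}\Ic$ is MCM (so Lemma \ref{stable} can be invoked to produce stability) and that $\Omega^k\Ic$ is itself Ulrich (so Corollary \ref{prop.I.Ulrich.preservada.por.LH} has the desired non-trivial conclusion).
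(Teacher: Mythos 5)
Your argument is correct and is essentially the paper's own proof: both identify $\Omega^k\Ic$ with $\Omega^{k+1}(R/\Ic)$, invoke Remark \ref{obs.I.urich.module}(iii) to get Ulrichness (hence maximal Cohen-Macaulayness) of the relevant syzygies, apply Lemma \ref{stable} to $\Omega^{k}(R/\Ic)=\Omega^{k-1}\Ic$ to obtain stability of $\Omega^k\Ic$, and conclude via Corollary \ref{prop.I.Ulrich.preservada.por.LH}. The only cosmetic difference is that you spell out the syzygy-index bookkeeping that the paper leaves implicit.
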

\begin{proof} First, as recalled in Remark \ref{obs.I.urich.module}(iii), the $R$-module $\Omega^k(R/\Ic)$ is Ulrich with respect to $\Ic$ (in particular, maximal Cohen-Macaulay) for all $k\geq d$. It follows by Lemma \ref{stable} that the $R$-module $\Omega^{k+1}(R/\Ic)=\Omega^{k}\Ic$ is stable for all $k\geq d$, and thus Corollary \ref{prop.I.Ulrich.preservada.por.LH} concludes the proof. \qed
\end{proof}

\begin{corollary}\label{linkage-Ulr-ideal} Let $R$ be a 1-dimensional Gorenstein local ring. If $\Ic$ is an Ulrich ideal of $R$ which is not a parameter ideal, then $\lambda {\Ic}$ is an Ulrich $R$-module with respect to $\Ic$.
\end{corollary}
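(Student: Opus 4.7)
The plan is to apply Corollary~\ref{prop.I.Ulrich.preservada.por.LH} directly to the module $M = \Ic$. For this I must verify three things about $\Ic$ viewed as an $R$-module: that it is (a) maximal Cohen-Macaulay, (b) Ulrich with respect to itself, and (c) stable. The first two are immediate from what is already in the paper. Since $R$ is $1$-dimensional Cohen-Macaulay and $\Ic$ is $\Mc$-primary, $\Ic$ contains an $R$-regular element, so $\textrm{depth}_R \Ic \geq 1 = \dim R$, making $\Ic$ maximal Cohen-Macaulay. That $\Ic$ is Ulrich with respect to itself is recorded in Remark~\ref{obs.I.urich.module}(ii).

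The one point that requires argument is stability, and this is where the hypothesis that $\Ic$ is not a parameter ideal enters. My plan is to argue by rank: because $R$ is $1$-dimensional Cohen-Macaulay and $\Ic$ contains a nonzerodivisor, inverting that element shows that $\Ic$ has rank $1$ over the total quotient ring of $R$. If $\Ic$ had a nonzero free direct summand, say $\Ic \cong R \oplus N$, then $N$ would necessarily have rank $0$; but $N$ embeds in $R$, which is torsion-free over itself, forcing $N = 0$. Thus $\Ic \cong R$ as $R$-modules, i.e., $\Ic$ would be principal generated by a nonzerodivisor. Such a principal $\Mc$-primary ideal is a parameter ideal of $R$, contradicting our hypothesis. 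Hence $\Ic$ has no nonzero free summand, i.e., $\Ic$ is stable.

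With (a), (b), and (c) in hand, Corollary~\ref{prop.I.Ulrich.preservada.por.LH} applied to $M = \Ic$ immediately yields that $\lambda \Ic$ is Ulrich with respect to $\Ic$, completing the proof. The only mild obstacle is the rank argument for stability; once that is handled, the statement is a clean specialization of the corollary to the $1$-dimensional case.
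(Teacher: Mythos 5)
Your proposal is correct and follows essentially the same route as the paper: verify that $\Ic$ is Ulrich with respect to itself (Remark \ref{obs.I.urich.module}(ii)) and stable, then invoke Corollary \ref{prop.I.Ulrich.preservada.por.LH}. The only difference is that you spell out the stability step via a rank argument, whereas the paper simply notes that $\Ic$ is non-principal and hence non-free; your expanded justification is a valid filling-in of that terse remark.
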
 
\begin{proof} By Remark \ref{obs.I.urich.module}(ii), $\Ic$ is an Ulrich $R$-module with respect to $\Ic$. Note $\Ic$ is stable as it is a non-principal ideal, hence a non-free $R$-module.  Now, apply Corollary \ref{prop.I.Ulrich.preservada.por.LH}. \qed
\end{proof}

\section{Minimal multiplicity and Ulrich properties}\label{sec5}

We start the section presenting a number of preparatory definitions (e.g., Rees and associated graded modules, and relative reduction numbers) as well as some auxiliary facts.

Let $I$ be a proper ideal of a ring $R$. Recall that the  Rees algebra of $I$ is  the graded ring ${\mathcal R}(I)  =  \bigoplus_{n \geq 0}I^{n}$ (as usual, we put $I^0=R$), which can be realized as the standard graded subalgebra $R[Iu]\subset R[u]$, where $u$ is an indeterminate over $R$. The associated graded ring of $I$ is given by 
${\mathcal G}(I)  = \bigoplus_{n \geq
0}I^{n}/I^{n + 1}={\mathcal R}(I)\otimes_RR/I$, which is standard graded over $R/I$.

\begin{definition}\rm If $M$ is a finite $R$-module, the {\it Rees module} and the {\it associated graded module} of $I$ relative to $M$ are, respectively, given by $${\mathcal R}(I, M)  =   \bigoplus_{n \geq 0} I^{n}M, \quad {\mathcal G}(I, M)  = \bigoplus_{n \geq 0} \frac{I^{n}M}{I^{n + 1}M}={\mathcal R}(I, M)\otimes_RR/I,$$ which are  
finite graded modules over ${\mathcal R}(I)$ and  ${\mathcal G}(I)$, respectively.
\end{definition}

Now consider a local ring $(R, \Mc)$ with residue field $k$. For a proper ideal $I$ of $R$, recall that the  fiber cone of $I$ is the special fiber ring of ${\mathcal R}(I)$, i.e., the standard graded $k$-algebra ${\mathcal F}(I)=\bigoplus_{n \geq
0}I^{n}/\Mc I^{n}={\mathcal R}(I)\otimes_Rk$. We can also consider the finite graded ${\mathcal F}(I)$-module ${\mathcal F}(I, M)  = \bigoplus_{n \geq 0} I^{n}M/\Mc I^{n}M={\mathcal R}(I, M)\otimes_Rk,$ whose Krull dimension (called {\it analytic spread} of $I$ relative to $M$) is denoted $${\rm s}_M(I) = {\rm dim}\,{\mathcal F}(I, M).$$

\begin{definition}\rm
Let $I$ be a proper ideal of a ring $R$ and let $M$ be a non-zero finite $R$-module. An ideal $J \subset I$ is called an \textit{$M$-reduction of $I$} if $JI^{n}M = I^{n + 1}M$ for some integer $n \geq 0$. Such an $M$-reduction $J$ is said to be \textit{minimal} if it is minimal with respect to inclusion. If $J$ is an $M$-reduction of $I$, we define the \textit{reduction number of $I$ with respect to $J$ relative to $M$} as
$${\rm r}_{J}(I, M) = \mathrm{min}\{m \in \mathbb{N} \mid JI^{m}M = I^{m + 1}M\}.$$
\end{definition}

The lemma below detects a useful connection between minimal $M$-reductions and the so-called ({\it maximal}) {\it $M$-superficial sequences} of a given $\Mc$-primary ideal in a local ring $(R, \Mc)$; for the definition and details about the latter concept, we refer to  \cite[1.2 and 1.3]{Rossi-Valla} (cf.\,also \cite{Conti}).

\begin{lemma}\label{connection} {\rm (\cite[Corollario 3.14]{Conti})} Let $(R, \Mc)$ be a local ring with infinite residue field and let $I$ be an $\Mc$-primary ideal. Let $M$ be a finite $R$-module of positive dimension. Then, every minimal $M$-reduction of $I$ can be generated by a maximal $M$-superficial sequence of $I$. Conversely, an ideal generated by a maximal $M$-superficial sequence of $I$ is necessarily a minimal $M$-reduction of $I$.
\end{lemma}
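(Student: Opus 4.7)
The plan is to exploit the fiber cone module ${\mathcal F}(I, M)$, whose Krull dimension is by definition the analytic spread ${\rm s}_M(I)$, and to use a graded version of Nakayama's lemma together with prime avoidance (available thanks to the infinite residue field).

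The first step is to recast the $M$-reduction property in graded terms: $J \subset I$ is an $M$-reduction of $I$ precisely when the image of $J$ in ${\mathcal F}(I, M)_1 = I/\Mc I$ generates an ideal under which the quotient ${\mathcal F}(I, M)/J{\mathcal F}(I, M)$ has finite length (equivalently, Krull dimension zero). By graded Nakayama this forces the images of the generators of $J$ to form a homogeneous system of parameters for ${\mathcal F}(I, M)$; hence any minimal $M$-reduction must be generated by exactly ${\rm s}_M(I)$ elements, and every minimal generating set produces such a homogeneous system of parameters.

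For the forward implication, I would argue that, given a minimal $M$-reduction $J = (y_1, \ldots, y_s)$ with $s = {\rm s}_M(I)$, the generators may be chosen generically so that, for each $i$, the initial form of $y_i$ in ${\mathcal F}(I, M)_1$ avoids the minimal primes of ${\mathcal F}(I, M)/(y_1, \ldots, y_{i-1}){\mathcal F}(I, M)$ other than the irrelevant ideal; this is where the infinite residue field is crucial, via prime avoidance applied to the finitely many relevant primes in each degree-one slice. The standard dictionary then identifies such generic choices with $M$-superficial elements of $I$ (for the successive modules $M/(y_1, \ldots, y_{i-1})M$), yielding a maximal $M$-superficial sequence that generates $J$.

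Conversely, a maximal $M$-superficial sequence $x_1, \ldots, x_s$ has length $s = {\rm s}_M(I)$ (indeed, each superficial step drops the analytic spread by one), and the inductive definition of superficiality ensures that the initial forms of the $x_i$ in ${\mathcal F}(I, M)$ constitute a homogeneous system of parameters; graded Nakayama then delivers $(x_1, \ldots, x_s)I^n M = I^{n+1}M$ for $n \gg 0$, so the ideal is an $M$-reduction, and minimality is forced by the cardinality $s = {\rm s}_M(I)$. The main technical obstacle is the precise equivalence between the superficial condition on $x_i$ and the generic-element condition on its initial form in the fiber cone; while standard, this requires care in tracking the associated graded module ${\mathcal G}(I, M/(x_1, \ldots, x_{i-1})M)$ at each step, so as to ensure that the analytic spread decreases by exactly one and that no unintended associated primes appear along the induction.
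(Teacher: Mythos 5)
First, a point of reference: the paper does not prove this lemma at all --- it is quoted from Rossi--Valla (p.~12) and used as a black box --- so there is no internal proof to compare against. Your sketch follows the same general strategy as the standard source (translate reductions into a graded statement about a blowup object, then use prime avoidance over the infinite residue field and induct), so the shape is right; the question is whether the sketch closes, and it does not.

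The genuine gap sits exactly where you flag ``the main technical obstacle,'' and it is not merely a matter of care: superficiality is governed by the wrong graded object in your argument. An element $x\in I$ whose initial form avoids the minimal primes of (quotients of) the fiber cone ${\mathcal F}(I,M)$ is only guaranteed to be part of a homogeneous system of parameters, i.e.\ part of some minimal $M$-reduction; it need not be $M$-superficial. The correct characterization is that $x$ is $M$-superficial for $I$ iff its initial form in ${\mathcal G}(I)_1=I/I^2$ avoids every \emph{associated} prime of the associated graded module ${\mathcal G}(I,M)$ not containing ${\mathcal G}(I)_+$ --- embedded primes included, which minimal-prime avoidance on ${\mathcal F}(I,M)$ does not see. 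Consequently your forward direction is missing its key step: given a minimal $M$-reduction $J=(y_1,\dots,y_t)$, you must show that the span of the initial forms $y_1^*,\dots,y_t^*$ is not contained in the degree-one part of any relevant $P\in\operatorname{Ass}{\mathcal G}(I,M)$, so that prime avoidance (infinite residue field) produces a superficial element \emph{inside} $J$. This holds for a reason never invoked in your sketch: the reduction property $JI^nM=I^{n+1}M$ for $n\gg 0$ forces ${\mathcal G}(I,M)/(y_1^*,\dots,y_t^*){\mathcal G}(I,M)$ to vanish in large degrees, hence to have dimension zero over ${\mathcal G}(I)$, so the $y_i^*$ cannot all lie in any prime $P$ of $\operatorname{Supp}{\mathcal G}(I,M)$ with $\dim{\mathcal G}(I)/P\geq 1$; in particular they avoid every relevant associated prime. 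One then iterates, using that a superficial element drops $\dim M$ by exactly one (here positivity of $\dim M$ and the filter-regularity built into superficiality are used) so that $J/(x_1)$ remains a minimal reduction for $M/x_1M$; your phrase ``each superficial step drops the analytic spread by one'' is asserting precisely this unproved ingredient. Without the switch from ${\mathcal F}(I,M)$ to $\operatorname{Ass}{\mathcal G}(I,M)$ and the support-dimension argument showing that $J$ meets the relevant open set, the proposal does not constitute a proof.
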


Next we invoke a central notion in this section, and a helpful lemma. As in Subsection \ref{ulrich-subsect}, if $I$ is an ideal of definition of a finite $R$-module $M$ then $\textrm{e}_I^0(M)$ denotes the multiplicity of $M$ with respect to $I$. Moreover, we let $\textrm{e}_I^1(M)$ stand for the first Hilbert coefficient -- the so-called {\it Chern number} -- of $M$ with respect to $I$.

\begin{definition}{\rm (\cite[Definition 15]{Puthenpurakal2})} \rm Let $(R, \Mc)$ be a
local ring, $M$ a Cohen-Macaulay $R$-module of dimension $t$ and
$I$ a proper ideal of $R$ such that ${\Mc}^{n}M \subset IM$ for
some $n > 0$. Then $M$ has \emph{minimal multiplicity with
respect to $I$} if $$\textrm{e}_I^0 (M) = (1 - t)\ell_R(M/IM) +
\ell_R(IM/I^2M).$$
\end{definition}

Notice that by taking $M=R$ and $I=\Mc$ we recover Definition \ref{min-mult}.

\begin{lemma}\label{Puthenchar} {\rm (\cite[Theorem 16]{Puthenpurakal2})} Let $(R, \Mc)$ be a local ring,
$M$ a Cohen-Macaulay $R$-module of dimension $t$ and $I$ a proper ideal
of $R$ such that ${\Mc}^{n}M \subset IM$ for some $n >
0$. The following conditions are equivalent:
\begin{itemize}
\item[(i)] $M$ has minimal multiplicity with respect to $I$;
\item[(ii)] $(z_{1}, \ldots,z_{t})IM = I^2M$, for every maximal $M$-superficial sequence $z_{1}, \ldots,z_{t}$;
\item[(iii)] $(z_{1}, \ldots,z_{t})IM = I^2M$, for some maximal $M$-superficial sequence $z_{1}, \ldots,z_{t}$;
\item[(iv)] ${\rm e}_{I}^{1}(M) = {\rm e}_{I}^{0}(M) - \ell_R(M/IM).$
\end{itemize}
\end{lemma}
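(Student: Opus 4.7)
My plan is to prove the chain (i) $\Leftrightarrow$ (iii) $\Rightarrow$ (ii), with (ii) $\Rightarrow$ (iii) being trivial, and then (iii) $\Leftrightarrow$ (iv). After passing, if necessary, to the faithfully flat extension $R(T) = R[T]_{\Mc R[T]}$ so that the residue field becomes infinite and all the relevant invariants are preserved, I fix a maximal $M$-superficial sequence $z_1,\ldots,z_t$ for $I$, which by Lemma \ref{connection} generates a minimal $M$-reduction $J = (z_1,\ldots,z_t)$ of $I$. Since $M$ is Cohen-Macaulay of dimension $t$, this sequence is $M$-regular, and therefore $\textrm{e}_I^0(M) = \textrm{e}_J^0(M) = \ell_R(M/JM)$.

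The heart of the proof is the length identity
$$\textrm{e}_I^0(M) \;=\; (1-t)\,\ell_R(M/IM) + \ell_R(IM/I^2M) + \ell_R(I^2M/JIM),$$
obtained by double-counting $\ell_R(IM/JIM)$ via the two filtrations $JIM \subset JM \subset IM$ and $JIM \subset I^2M \subset IM$. The crucial input is the isomorphism $JM/JIM \cong (M/IM)^t$, which I would extract from the Koszul complex on the $M$-regular sequence $z_1,\ldots,z_t$: the surjection $\pi \colon M^t \twoheadrightarrow JM$ sending $(m_1,\ldots,m_t)$ to $\sum z_i m_i$ has its kernel contained in $(JM)^t \subset (IM)^t$ (vanishing of the first Koszul homology), and a direct check then gives $\pi^{-1}(JIM) = (IM)^t$, so $JM/JIM \cong M^t/(IM)^t \cong (M/IM)^t$. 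Once the identity is in place, (i) $\Leftrightarrow$ (iii) is immediate, since the right-hand side equals $(1-t)\ell_R(M/IM) + \ell_R(IM/I^2M)$ if and only if $\ell_R(I^2M/JIM)=0$, i.e., $I^2M = JIM$. Moreover, as (i) does not depend on the chosen $z_i$'s, we simultaneously obtain (iii) $\Rightarrow$ (ii).

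For (iii) $\Leftrightarrow$ (iv) I would pass to the associated graded module. Under (iii), the Valabrega-Valla criterion applies and shows that the initial forms $z_1^\ast,\ldots,z_t^\ast \in [{\mathcal G}(I)]_1$ form a regular sequence on ${\mathcal G}(I,M)$; consequently the Hilbert series of ${\mathcal G}(I,M)$ takes the form $h(z)/(1-z)^t$ with $h(z) = \ell_R(M/IM) + \ell_R(IM/JM)\,z$, whence $\textrm{e}_I^0(M) = h(1)$ and $\textrm{e}_I^1(M) = h'(1) = \ell_R(IM/JM) = \textrm{e}_I^0(M) - \ell_R(M/IM)$, which is (iv). For the converse (iv) $\Rightarrow$ (iii), I would invoke the module-theoretic Northcott inequality $\textrm{e}_I^1(M) \geq \textrm{e}_I^0(M) - \ell_R(M/IM)$, whose equality case (a Huneke-Ooishi type statement extended to modules) forces $JIM = I^2M$; alternatively, one may induct on $t$ by reducing modulo a superficial element, ultimately arriving at the one-dimensional case, where the claim is a direct Hilbert-series computation.

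The main obstacle I anticipate is the Koszul-type computation $\pi^{-1}(JIM) = (IM)^t$, which requires carefully exploiting the first-syzygy vanishing afforded by $M$-regularity; a close second is securing the module version of Northcott's inequality (or the inductive reduction to dimension one) needed for the implication (iv) $\Rightarrow$ (iii).
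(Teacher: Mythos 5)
This lemma is quoted verbatim from \cite[Theorem 2.4]{Puthenpurakal}; the paper supplies no proof of its own, so your attempt can only be judged on its own merits. Most of it holds up. The length identity
$\textrm{e}_I^0(M) = (1-t)\ell_R(M/IM) + \ell_R(IM/I^2M) + \ell_R(I^2M/JIM)$
is correct: since a maximal $M$-superficial sequence generates a minimal $M$-reduction $J$ and is $M$-regular ($M$ being Cohen--Macaulay of dimension $t$), one has $\textrm{e}_I^0(M)=\ell_R(M/JM)$, and your Koszul argument for $JM/JIM\cong (M/IM)^t$ is sound (acyclicity of the Koszul complex puts $\ker\pi$ inside $(JM)^t\subset (IM)^t$, whence $\pi^{-1}(JIM)=(IM)^t$). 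Since the left side and the first two right-hand terms are independent of the chosen superficial sequence, $\ell_R(I^2M/JIM)$ is too, and (i) $\Leftrightarrow$ (ii) $\Leftrightarrow$ (iii) follows exactly as you say. The implication (iii) $\Rightarrow$ (iv) via Valabrega--Valla and the Hilbert series $h(z)/(1-z)^t$ with $h(z)=\ell_R(M/IM)+\ell_R(IM/JM)z$ is also the standard, correct route (one should verify the Valabrega--Valla intersection conditions for all powers, not just $I^2M$, but with $I^{n+1}M=J^nIM$ this is routine).

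The genuine gap is (iv) $\Rightarrow$ (iii). Your first suggestion --- invoking the equality case of the module-theoretic Northcott inequality, ``a Huneke--Ooishi type statement extended to modules'' --- is circular: that equality case \emph{is} the equivalence (iii) $\Leftrightarrow$ (iv) of the very theorem under proof (and is exactly what Puthenpurakal establishes). Your fallback, induction on $t$ by factoring out a superficial element, is the right idea but is only gestured at; the work lies precisely in the points you skip, namely that $\textrm{e}_I^0$, $\textrm{e}_I^1$ and $\ell_R(M/IM)$ are preserved when passing to $M/z_1M$, that the condition $\bar J\bar I\bar M=\bar I^2\bar M$ lifts back to $JIM=I^2M$ (this needs $z_1M\cap I^2M=z_1IM$, which does not come for free from Artin--Rees in low degree), and the one-dimensional base case. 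As it stands, the hardest direction of the lemma is not proved.
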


\smallskip

Here we observe that item (iii) above is not present in \cite{Puthenpurakal2}, but a simple inspection of the proof easily shows that this assertion is also equivalent to the ones given in   \cite[Theorem 16]{Puthenpurakal2}.

Our first result in this part is the following. As in the previous sections, we let $Q=(x_1, \ldots, x_d)\subset \Ic$ be as in Convention \ref{convention}.

\begin{proposition}\label{UlrichMultMin}
Suppose $R$ is a Cohen-Macaulay local ring with infinite residue field. Then, every Ulrich $R$-module with respect to $\Ic$ has minimal multiplicity with respect to $\Ic$.
\end{proposition}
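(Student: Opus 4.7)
The plan is to verify the characterization of minimal multiplicity furnished by part (iv) of Lemma \ref{Puthenchar}: since $M$ is Ulrich with respect to $\Ic$, Remarks \ref{obs.I.urich.module}(i) already delivers $\textrm{e}_{\Ic}^{0}(M)=\ell_R(M/\Ic M)$, so the whole task reduces to proving that the Chern number $\textrm{e}_{\Ic}^{1}(M)$ vanishes.

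The key step will be to observe that the Ulrich identity $\Ic M = QM$ propagates to all powers of $\Ic$. Indeed, a straightforward induction on $n$ gives
$$\Ic^{n+1}M \;=\; \Ic\cdot\Ic^{n}M \;=\; \Ic\cdot Q^{n}M \;=\; Q^{n}\cdot\Ic M \;=\; Q^{n+1}M,$$
so that $\Ic^{n}M = Q^{n}M$ for every $n\geq 1$. Consequently the Hilbert-Samuel function of $M$ with respect to $\Ic$ coincides, in every degree, with the one with respect to the parameter ideal $Q$.

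Now, $R$ being Cohen-Macaulay forces $x_1,\ldots,x_d$ to be an $R$-regular sequence, and since $M$ is maximal Cohen-Macaulay it is automatically $M$-regular as well. A classical computation -- the associated graded module of $M$ along an ideal generated by an $M$-regular sequence is the obvious polynomial module -- then yields $\mathcal{G}(Q,M)\cong (M/QM)[T_1,\ldots,T_d]$ as graded $\mathcal{G}(Q)$-modules, so that $\ell_R(Q^nM/Q^{n+1}M)=\ell_R(M/QM)\binom{n+d-1}{d-1}$ and hence, summing, $\ell_R(M/Q^{n+1}M)=\ell_R(M/QM)\binom{n+d}{d}$ for all $n\geq 0$.

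Putting the previous two paragraphs together, the Hilbert-Samuel polynomial of $M$ with respect to $\Ic$ is precisely $\ell_R(M/\Ic M)\binom{n+d}{d}$, so all Hilbert coefficients in positive degree vanish; in particular $\textrm{e}_{\Ic}^{1}(M)=0$, completing the verification of Lemma \ref{Puthenchar}(iv). The argument is essentially a one-line calculation once the identity $\Ic^{n}M=Q^{n}M$ is in place, so I do not anticipate any genuine obstacle -- the only point worth a quoted reference is the well-known fact that the associated graded module of $M$ along a regular sequence is a polynomial module, which is what allows the Chern number to be read off.
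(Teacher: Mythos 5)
Your proof is correct, but it takes a genuinely different route through Lemma \ref{Puthenchar} than the paper does. The paper verifies condition (iii) of that lemma: it first shows that $Q$ is a \emph{minimal} $M$-reduction of $\Ic$ by squeezing the analytic spread ${\rm s}_M(\Ic)$ between $\mathrm{grade}(\Ic,M)=d$ and $\dim M=d$, invokes Lemma \ref{connection} to conclude that $x_1,\ldots,x_d$ is a maximal $M$-superficial sequence, and then reads off $Q\Ic M=\Ic^2M$ from the Ulrich identity $QM=\Ic M$. You instead verify condition (iv) by computing the Hilbert--Samuel function outright: the induction $\Ic^nM=Q^nM$ reduces everything to the parameter ideal $Q$, and quasi-regularity of $x_1,\ldots,x_d$ on the maximal Cohen--Macaulay module $M$ gives $\ell_R(M/\Ic^{n}M)=\ell_R(M/\Ic M)\binom{n+d-1}{d}$ exactly, so every coefficient beyond the leading one vanishes. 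Your route avoids superficial sequences and analytic spread entirely (Lemma \ref{connection} is not needed), at the cost of quoting the standard fact that the associated graded module along an $M$-regular sequence is a polynomial module; and it actually proves more than required, namely ${\rm e}^{i}_{\Ic}(M)=0$ for \emph{all} $i\geq 1$. That stronger conclusion subsumes the forward implication of Corollary \ref{UlrHilb} and answers the question about ${\rm e}^{2}_{\Ic}(M)$ raised in Remarks \ref{Hilb-S}(i): it is zero for Ulrich modules. The only degenerate point is $d=0$, where the binomial bookkeeping collapses, but there the statement is immediate anyway.
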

\begin{proof} Let $M$ be an Ulrich module with respect to $\Ic$. In particular, $M$ is maximal Cohen-Macaulay. Let $\mathrm{grade}(\Ic, M)$ denote the maximal length of an $M$-sequence contained in $\Ic$. By \cite[Lemma 1.3 and Lemma 1.6]{Kadu}, we have
$$\mathrm{grade}(\Ic, M) \leq {\rm s}_{M}(\Ic) \leq \mathrm{dim}\,M.$$ As $\Ic$ is  $\Mc$-primary, $\mathrm{grade}(\Ic, M) ={\rm depth}\,M=d$, where as before $d={\rm dim}\,R$. Hence ${\rm s}_{M}(\Ic)=d=\nu(Q)$, where $\nu(-)$ stands for minimal number of generators. As is well-known (see, e.g., \cite[Corollario 3.22]{Conti}), this implies that $Q$ is a minimal $M$-reduction of $\Ic$, and therefore Lemma \ref{connection} gives that $x_1, \ldots, x_d$ is in fact a maximal $M$-superficial sequence of $\Ic$. On the other hand, because $M$ is Ulrich, we have $QM=\Ic M$ and so $$Q\Ic M={\Ic}^2M.$$
We conclude, by Lemma \ref{Puthenchar}, that $M$ has minimal multiplicity with respect to $\Ic$. \qed
\end{proof}

\begin{remark}\rm The converse of Proposition \ref{UlrichMultMin} fails even in the classical case $\Ic = \Mc$ (see \cite[Example 4.12]{Puthenpurakal3}).

\end{remark}

Combining Proposition \ref{UlrichMultMin}  and \cite[Theorem 16]{Puthenpurakal2}, we immediately obtain the following property.

\begin{corollary}\label{CM-G} Suppose $R$ is a Cohen-Macaulay local ring with infinite residue field. If $M$ is an Ulrich $R$-module with respect to $\Ic$, then the associated graded ${\mathcal G}(\Ic)$-module ${\mathcal G}(\Ic, M)$ is Cohen-Macaulay.
    
\end{corollary}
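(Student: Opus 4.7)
The plan is essentially to assemble two cited results. By Proposition \ref{UlrichMultMin}, any Ulrich module $M$ with respect to $\Ic$ automatically has minimal multiplicity with respect to $\Ic$, so the task reduces to showing that minimal multiplicity of a maximal Cohen-Macaulay module forces the associated graded module to be Cohen-Macaulay. This implication is built into the full version of Puthenpurakal's \cite[Theorem 2.4]{Puthenpurakal}: beyond the four equivalences quoted in our Lemma \ref{Puthenchar}, that theorem also asserts the Cohen-Macaulayness of $\mathcal{G}(\Ic, M)$ under any of those equivalent conditions.

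Concretely, I would proceed as follows. First, apply Proposition \ref{UlrichMultMin} to obtain that $M$ has minimal multiplicity with respect to $\Ic$. Second, note (as was already verified inside the proof of Proposition \ref{UlrichMultMin}) that the generating set ${\bf x} = x_1, \ldots, x_d$ of $Q$ is a maximal $M$-superficial sequence of $\Ic$, and that the Ulrich property yields the relation $Q \Ic M = \Ic^2 M$. Third, invoke \cite[Theorem 2.4]{Puthenpurakal} in its full form, which upgrades this relation (i.e., the equivalent condition (iii) of Lemma \ref{Puthenchar}) to the Cohen-Macaulayness of $\mathcal{G}(\Ic, M)$ as a graded module over $\mathcal{G}(\Ic)$.

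Since the structural work has already been done in Proposition \ref{UlrichMultMin}, there is no real obstacle; the corollary is a direct consequence. If one wanted to avoid appealing to the unquoted portion of \cite[Theorem 2.4]{Puthenpurakal}, an alternative would be to apply a Valabrega--Valla-type criterion: the equality $Q \Ic M = \Ic^2 M$ together with ${\bf x}$ being a maximal $M$-superficial sequence forces the reduction number ${\rm r}_Q(\Ic, M) \leq 1$, which is a classical sufficient condition for the associated graded module of $\Ic$ relative to a maximal Cohen-Macaulay module to be Cohen-Macaulay. Either route is short, so the only mildly delicate point is the bookkeeping of which form of the cited theorem is being invoked.
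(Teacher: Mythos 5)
Your proposal is correct and matches the paper's argument exactly: the corollary is obtained by combining Proposition \ref{UlrichMultMin} with the full statement of \cite[Theorem 2.4]{Puthenpurakal}, whose unquoted portion supplies the Cohen--Macaulayness of ${\mathcal G}(\Ic, M)$ under minimal multiplicity. The alternative Valabrega--Valla route you sketch is a reasonable backup but is not needed.
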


The next consequence deals with the Chern number and gives a generalization of \cite[Corollary 1.3(1)]{Ooishi}.

\begin{corollary}\label{UlrHilb}
Let $(R, \Mc)$ be a Cohen-Macaulay local ring with infinite residue field and positive dimension, and let $M$ be a maximal Cohen-Macaulay $R$-module. Then ${\rm e}^{1}_{\Ic}(M) \geq 0$, and the following assertions are equivalent:
\begin{itemize}
\item[(i)]  $M$ is an Ulrich $R$-module with respect to $\Ic$;
\item[(ii)] $M/\Ic M$ is a free $R/\Ic $-module and ${\rm e}^{1}_{\Ic}(M) = 0$.
\end{itemize}
\end{corollary}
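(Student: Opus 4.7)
My plan is to deduce both the non-negativity of the Chern number and the equivalence (i) $\Leftrightarrow$ (ii) from the classical Northcott-type inequality
\[
{\rm e}^1_{\Ic}(M) \;\geq\; {\rm e}^0_{\Ic}(M) - \ell_R(M/\Ic M),
\]
which holds for any maximal Cohen-Macaulay module $M$ over a Cohen-Macaulay local ring with respect to an $\Mc$-primary ideal. Combining this with Remarks \ref{obs.I.urich.module}(i), which gives ${\rm e}^0_{\Ic}(M) = \ell_R(M/QM)$, and with the obvious inequality $\ell_R(M/QM) \geq \ell_R(M/\Ic M)$ coming from the canonical surjection $M/QM \twoheadrightarrow M/\Ic M$ (valid since $QM \subseteq \Ic M$), I would obtain the chain
\[
{\rm e}^1_{\Ic}(M) \;\geq\; \ell_R(M/QM) - \ell_R(M/\Ic M) \;\geq\; 0,
\]
settling the first claim of the corollary.

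For the implication (i) $\Rightarrow$ (ii), I would invoke Proposition \ref{UlrichMultMin} to conclude that an Ulrich module $M$ with respect to $\Ic$ has minimal multiplicity with respect to $\Ic$; then Lemma \ref{Puthenchar}(iv) promotes the Northcott inequality to the \emph{equality} ${\rm e}^1_{\Ic}(M) = {\rm e}^0_{\Ic}(M) - \ell_R(M/\Ic M)$. Since the Ulrich condition $\Ic M = QM$ forces ${\rm e}^0_{\Ic}(M) = \ell_R(M/QM) = \ell_R(M/\Ic M)$, this gives ${\rm e}^1_{\Ic}(M) = 0$; the freeness of $M/\Ic M$ over $R/\Ic$ is already built into Definition \ref{def.I.urich.module}.

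For the converse (ii) $\Rightarrow$ (i), the vanishing ${\rm e}^1_{\Ic}(M) = 0$ forces equality throughout the displayed chain, whence $\ell_R(M/QM) = \ell_R(M/\Ic M)$. The canonical surjection $M/QM \twoheadrightarrow M/\Ic M$ is then a length-preserving surjection, hence an isomorphism, and so $QM = \Ic M$. Together with the hypothesis that $M/\Ic M$ is $R/\Ic$-free and the standing assumption that $M$ is maximal Cohen-Macaulay, all three conditions of Definition \ref{def.I.urich.module} are met, so $M$ is Ulrich with respect to $\Ic$.

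The main obstacle is justifying the Northcott-type lower bound ${\rm e}^1_{\Ic}(M) \geq {\rm e}^0_{\Ic}(M) - \ell_R(M/\Ic M)$ in this module setting; this is a classical fact for maximal Cohen-Macaulay modules over Cohen-Macaulay local rings with infinite residue field (it can be extracted from the standard theory of superficial sequences and Hilbert functions of modules, e.g., along the lines of the treatment used in the sources already cited in the paper), but one must invoke a version that genuinely applies to modules and permits the reduction $Q$ to be a parameter ideal rather than requiring $Q = \Ic$.
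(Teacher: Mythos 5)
Your proposal is correct and follows essentially the same route as the paper: the Northcott-type inequality ${\rm e}^1_{\Ic}(M)\geq {\rm e}^0_{\Ic}(M)-\ell_R(M/\Ic M)$ that you flag as the main obstacle is exactly what the paper cites from Puthenpurakal (\cite[Proposition 12]{Puthenpurakal2}), and the rest of your argument (Remark \ref{obs.I.urich.module}(i) for nonnegativity, Proposition \ref{UlrichMultMin} together with Lemma \ref{Puthenchar} for the forward implication, and forcing equality in the chain for the converse) matches the paper's proof step for step.
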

\begin{proof}
Applying \cite[Proposition 12]{Puthenpurakal2} and Remark \ref{obs.I.urich.module}(i), we get $${\rm e}_{\Ic }^{1}(M) \geq {\rm e}_{\Ic }^{0}(M) - \ell_R(M/\Ic M)\geq 0.$$ If $M$ is Ulrich with respect to $\Ic $ then, by definition, the $R/\Ic $-module $M/\Ic M$ is free and in addition ${\rm e}_{\Ic }^{0}(M) = \ell_R(M/\Ic M)$ (use again Remark \ref{obs.I.urich.module}(i)). On the other hand, Proposition \ref{UlrichMultMin} ensures that $M$ has minimal multiplicity with respect to $\Ic $, and therefore Lemma \ref{Puthenchar} gives ${\rm e}_{\Ic }^{1}(M) = {\rm e}_{\Ic }^{0}(M) - \ell_R(M/\Ic M) = 0$.
	
	Conversely, suppose (ii). Since $M$ is already assumed to be maximal Cohen-Macaulay, it remains to show that $\Ic M=QM$, which as we know is equivalent to the equality $\textrm{e}_{\mathscr{I}}^0 (M) =
\ell_R(M/\mathscr{I}M)$. But this follows from
$0\leq {\rm e}_{\Ic }^{0}(M) - \ell_R(M/\Ic M) \leq {\rm e}_{\Ic }^{1}(M)  = 0$. This concludes the proof.\qed
\end{proof}

\begin{corollary}\label{Chern-of-syz} Let $(R, \Mc)$ be a Cohen-Macaulay local ring with infinite residue field and dimension $d\geq 1$. If $\Ic$ is an Ulrich ideal of $R$ which is not a parameter ideal, then ${\rm e}^{1}_{\Ic}(\Omega^{k}\Ic)  = 0$ for all $k\geq d-1$. If in addition $R$ is Gorenstein, then $${\rm e}^{1}_{\Ic}(\lambda (\Omega^{k}\Ic))  = 0 \quad \mbox{for \,all} \quad k\geq d.$$
    
\end{corollary}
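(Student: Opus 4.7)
The statement is a direct consequence of results already established in the section and earlier in the paper; my plan is to assemble them in two short steps.

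The plan is to use Corollary \ref{UlrHilb}, which gives the vanishing of the Chern number for any Ulrich module with respect to $\Ic$, combined with the two known sources of such Ulrich modules. First I would observe that, since $R/\Ic$ is cyclic, $\Omega(R/\Ic) = \Ic$ (the kernel of the minimal surjection $R\twoheadrightarrow R/\Ic$), hence $\Omega^{k}\Ic \cong \Omega^{k+1}(R/\Ic)$ for every $k\geq 0$. By Remark \ref{obs.I.urich.module}(iii), the hypothesis that $\Ic$ is Ulrich but not a parameter ideal implies that $\Omega^{i}(R/\Ic)$ is an Ulrich $R$-module with respect to $\Ic$ for every $i\geq d$. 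Therefore $\Omega^{k}\Ic$ is Ulrich with respect to $\Ic$ for all $k\geq d-1$, and Corollary \ref{UlrHilb} yields ${\rm e}^{1}_{\Ic}(\Omega^{k}\Ic)=0$ in that range.

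For the second assertion, assuming further that $R$ is Gorenstein, Corollary \ref{lambda-syz} asserts that $\lambda(\Omega^{k}\Ic)$ is an Ulrich $R$-module with respect to $\Ic$ for all $k\geq d$. Applying Corollary \ref{UlrHilb} once more gives ${\rm e}^{1}_{\Ic}(\lambda(\Omega^{k}\Ic))=0$ for such $k$, which concludes the proof.

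Since both halves reduce immediately to citing previously proved statements, there is no real obstacle; the only point requiring a moment of care is the identification $\Omega^{k}\Ic \cong \Omega^{k+1}(R/\Ic)$, so that the index shift in Remark \ref{obs.I.urich.module}(iii) matches the range $k\geq d-1$ claimed in the corollary. Once this shift is recorded, the argument is a two-line invocation of Corollaries \ref{UlrHilb} and \ref{lambda-syz}.
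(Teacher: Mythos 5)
Your proposal is correct and follows essentially the same route as the paper: identify $\Omega^{k}\Ic$ with $\Omega^{k+1}(R/\Ic)$, invoke Remark \ref{obs.I.urich.module}(iii) to see these syzygies are Ulrich with respect to $\Ic$ for $k\geq d-1$, apply Corollary \ref{UlrHilb}, and in the Gorenstein case use Corollary \ref{lambda-syz} before applying Corollary \ref{UlrHilb} again. No gaps; the index-shift point you flag is exactly the one the paper relies on.
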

\begin{proof} Recall that the $R$-module $\Omega^{k+1}(R/\Ic)=\Omega^{k}\Ic$ is Ulrich with respect to $\Ic$ (in particular, maximal Cohen-Macaulay) for all $k\geq d-1$; see Remark \ref{obs.I.urich.module}(iii). Then the vanishing of ${\rm e}^{1}_{\Ic}(\Omega^{k}\Ic)$ follows by Corollary \ref{UlrHilb}. Now if $R$ is Gorenstein then, by Corollary \ref{lambda-syz}, the module $\lambda (\Omega^k \Ic)$ is Ulrich with respect to $\Ic$ for all $k\geq d$, and we again apply Corollary \ref{UlrHilb}. \qed

\end{proof}

\begin{remarks}\label{Hilb-S}\rm (i) Let $M$ be a $d$-dimensional Cohen-Macaulay $R$-module (assume the setting of Convention \ref{convention}, with $d>0$ and $R/\Mc$ infinite). Recall that, for $k\gg 0$, the Hilbert-Samuel function  $H_{\Ic}^M(k)= \ell_R(M/{\Ic}^kM)$ coincides with a degree $d$ polynomial $P_{\Ic}^M(k)$ (the Hilbert-Samuel polynomial of $M$ with respect to $\Ic$) which can be expressed as 
$$P_{\Ic}^M(k) = \sum_{i = 0}^{d}(-1)^{i}{\rm e}^{i}_{\Ic}(M)\binom{k + d - i - 1}{d - i}.$$ Now if $M$ is Ulrich with respect to $\Ic$, then in particular $M/\Ic M\cong (R/\Ic)^{\nu(M)}$ and therefore, by Corollary \ref{UlrHilb}, we get ${\rm e}^{0}_{\Ic}(M)=\ell_R(M/\Ic M)=\nu(M)\ell_R(R/\Ic)$ and
${\rm e}^{1}_{\Ic}(M)=0$. Thus, if for instance $d=1$ then $P_{\Ic}^M(k) = \nu(M)\ell_R(R/\Ic) k$. If $d=2$, we have $$P_{\Ic}^M(k) = \nu(M)\ell_R(R/\Ic)\binom{k+1}{2}+ {\rm e}^{2}_{\Ic}(M),$$ which particularly raises the problem of finding ${\rm e}^{2}_{\Ic}(M)$. Of course, in case we know an integer $k_0$  satisfying $P_{\Ic}^M(k)=H_{\Ic}^M(k)$ for all $k\geq k_0$, then ${\rm e}^{2}_{\Ic}(M)$ can be computed from the expression above by evaluating $k=k_0$. 

\medskip

(ii) If $d\geq 1$ and $\Ic$ is an Ulrich ideal of $R$ then, as we know, the $j$-th syzygy module of $\Ic$ is Ulrich with respect to $\Ic$ for all $j\geq d-1$. Now assume $d=1$. Applying the preceding part to the module $\Omega^j\Ic$ for any $j\geq 0$, and noticing that $\nu(\Omega^j\Ic)$ is precisely the $j$-th Betti number $\beta_j(\Ic)$ of $\Ic$, we obtain the simple formula $$P_{\Ic}^{\Omega^j\Ic}(k) = \beta_j(\Ic)\ell_R(R/\Ic) k.$$ In addition, considering linkage and assuming that $R$ is Gorenstein,
our Corollary \ref{lambda-syz} yields that $\lambda(\Omega^j\Ic)$ is also Ulrich with respect to $\Ic$ for any $j\geq 1$, and observe that $\nu(\lambda(\Omega^j\Ic))=\beta_j(\Ic)$ as well. It follows that $P_{\Ic}^{\Omega^j\Ic}(k)=
P_{\Ic}^{\lambda(\Omega^j\Ic)}(k)$.

\end{remarks}

Our next result, Theorem \ref{regMulMin} below, provides a characterization of modules of minimal multiplicity in terms of reduction number and Castelnuovo-Mumford regularity (of blowup modules). For completeness, we recall the definition of the latter, which is of great importance in commutative algebra and algebraic geometry, for instance in the study of degrees of syzygies over polynomial rings; we refer, e.g., to \cite[Chapter 15]{B-S}.

Let $S = \bigoplus_{n \geq 0}S_{n}$ be a finitely generated standard graded algebra over a ring $S_{0}$. As usual, we write $S_{+} = \bigoplus_{n \geq 1}S_{n}$. For a graded $S$-module $A=\bigoplus_{n \in {\mathbb Z}}A_{n}$ satisfying $A_n=0$ for all $n\gg 0$,
we set
$${\rm end}\,A = \left\{ \begin{array}{lll}\textrm{max}\{n \ | \ A_{n} \neq 0\}, & \textrm{if} &  A \neq 0. \\ - \infty, & \textrm{if} &  A = 0. \end{array} \right.$$

Now fix a finite graded $S$-module $N\neq 0$. Given $j\geq 0$, let
$$H_{S_{+}}^{j}(N) =  \displaystyle \varinjlim_{k}{\rm Ext}_S^{j}(S/S_+^k,N)$$ be the $j$th local cohomology module of $N$. Recall $H_{S_{+}}^{j}(N)$ is a graded module such that $H_{S_{+}}^{j}(N)_n=0$ for all $n\gg 0$; see \cite[Proposition 15.1.5(ii)]{B-S}. Thus,  ${\rm end}\,H_{S_{+}}^{j}(N)<\infty$.

\begin{definition}\rm The \textit{Castelnuovo-Mumford regularity} of the graded $S$-module $N$ is given by
$$\mathrm{reg}\,N = \mathrm{max}\{{\rm end}\,H_{S_{+}}^{j}(N) + j \, \mid  \, j \geq 0\}.$$
\end{definition}

The following lemma will be very useful to the proof of Theorem \ref{regMulMin}, since it interprets the regularity of Rees modules as a relative reduction number in a suitable setting. It was originally stated in more generality (involving, e.g., $d$-sequences) but here the special case of regular sequences suffices for our purposes.

\begin{lemma}{\rm (cf.\,\cite[Theorem 5.3]{Giral-Planas-Vilanova})} \label{GPV}
Let $R$ be a  ring, $I$ an ideal of $R$ and $M$ a
finite $R$-module. Let $z_{1}, \ldots, z_{s}$ be an $M$-sequence
such that the ideal $J = (z_{1}, \ldots, z_{s})$ is an $M$-reduction of $I$. Let ${\rm r}_{J}(I, M) = r$. Suppose either $s=1$, or else $s\geq 2$ and
    $$(z_{1}, \ldots, z_{i})M \cap I^{r + 1}M = (z_{1}, \ldots, z_{i})I^{r}M \quad \mbox{for \,all} \quad i = 1, \ldots, s - 1.$$ Then, $\mathrm{reg}\,{\mathcal R}(I, M) =  {\rm r}_{J}(I,M)$.
\end{lemma}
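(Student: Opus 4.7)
The plan is to proceed by induction on $s$, exploiting the fact that each $z_i \in I$ yields a degree-one element $z_i u \in \mathcal{R}(I)_1$ whose action on $\mathcal{R}(I, M)$ in degree $n$ is the multiplication $I^n M \to I^{n+1}M$, $x \mapsto z_i x$. The goal is to show that the sequence $z_1 u, \ldots, z_s u$ behaves sufficiently regularly on $\mathcal{R}(I, M)$ so that standard graded-module techniques compute $\mathrm{reg}\,\mathcal{R}(I, M)$ from the finite-length residual quotient, with the intersection hypotheses controlling the kernels that arise along the way.

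For the base case $s = 1$, since $z_1$ is $M$-regular, $z_1 u$ is a non-zerodivisor on $\mathcal{R}(I, M)$, and one obtains the graded short exact sequence
\[
0 \longrightarrow \mathcal{R}(I, M)(-1) \xrightarrow{\,z_1 u\,} \mathcal{R}(I, M) \longrightarrow Q \longrightarrow 0,
\]
where the quotient $Q$ has $n$th graded piece $I^n M/z_1 I^{n-1}M$. The reduction equality $z_1 I^r M = I^{r+1}M$ implies $Q_n = 0$ for all $n > r$, while the minimality of $r = \mathrm{r}_J(I, M)$ forces $Q_r \neq 0$. Hence $Q$ is concentrated in degrees $0, 1, \ldots, r$; its $\mathcal{R}(I)_+$-torsion is all of $Q$, and higher local cohomology vanishes, so $\mathrm{reg}\,Q = r$. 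The standard fact that modding out by a degree-one regular element preserves regularity then yields $\mathrm{reg}\,\mathcal{R}(I, M) = r$.

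For the inductive step $s \geq 2$, I would aim to show that $z_1 u, \ldots, z_s u$ is a (filter-)regular sequence on $\mathcal{R}(I, M)$, using the hypothesized intersection equalities to control the kernels at each stage. Concretely, at the $i$th stage, showing that $z_i u$ acts as a non-zerodivisor modulo $(z_1 u, \ldots, z_{i-1}u)\mathcal{R}(I, M)$ in sufficiently high degree requires an equality of the form $(z_1, \ldots, z_{i-1})M \cap I^n M = (z_1, \ldots, z_{i-1}) I^{n-1}M$, which the hypothesis supplies in the critical degree $n = r+1$. Having established the (filter-)regular sequence property, iterating the base-case analysis reduces the problem to the residual quotient $\mathcal{R}(I, M)/(z_1 u, \ldots, z_s u)\mathcal{R}(I, M)$: by $JI^r M = I^{r+1}M$ this quotient is supported in degrees $\leq r$, and by minimality of $r$ its degree $r$ part is non-zero, yielding regularity exactly $r$ and hence the claim.

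The main obstacle will be the delicate interplay between the intersection hypothesis, which is given only in the single top degree, and the full regularity (or at least filter-regularity) requirements of the sequence $z_1 u, \ldots, z_s u$ on $\mathcal{R}(I, M)$. One would need to argue that the kernels appearing at each inductive step are concentrated in bounded degrees, so that only the top-degree intersection equality matters for the local cohomology computation, and simultaneously verify that the induction hypothesis can be applied after modding out by $z_1 u$ with the reduction number preserved. Carefully tracking the contribution of these bounded-degree kernels to $\mathrm{end}\,H^j_{\mathcal{R}(I)_+}$ and reconciling it with the inductive bound is the technically demanding point, and is presumably the content of the argument in \cite{Giral-Planas-Vilanova}.
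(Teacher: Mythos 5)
The paper itself gives no proof of this lemma: it is imported verbatim from \cite{Giral-Planas-Vilanova} and used as a black box (the paper only ever \emph{verifies its hypotheses}, in the proof of Theorem \ref{regMulMin}), so your attempt has to stand on its own. Your base case $s=1$ is correct and complete: $z_1u$ is a degree-one non-zerodivisor on $\mathcal{R}(I,M)$ because $z_1$ is $M$-regular, the cokernel $Q$ with $Q_n=I^nM/z_1I^{n-1}M$ vanishes for $n>r$ and is non-zero in degree $r$ by minimality of $r$, hence $\mathrm{reg}\,Q=r$, and regularity is indeed unchanged upon factoring out a degree-one non-zerodivisor.

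The inductive step, however, is only a plan, and the points you defer are precisely the substance of the theorem. Three concrete gaps. First, $\mathcal{R}(I,M)/(z_1u)\mathcal{R}(I,M)$, whose degree-$n$ piece is $I^nM/z_1I^{n-1}M$, is \emph{not} the Rees module of $I$ relative to $M/z_1M$: the two differ in degree $n$ by $(z_1M\cap I^nM)/z_1I^{n-1}M$, so the induction hypothesis cannot be invoked on the quotient without controlling these kernels in every degree and without showing that the reduction number of $I$ relative to $M/z_1M$ is still $r$. Second, filter-regularity of $z_iu$ on $\mathcal{R}(I,M)/(z_1u,\dots,z_{i-1}u)\mathcal{R}(I,M)$ in degree $n$ amounts, via the $M$-sequence property, to the equality $(z_1,\dots,z_{i-1})M\cap I^{n}M=(z_1,\dots,z_{i-1})I^{n-1}M$, which you need for \emph{all} $n\ge r+1$ in order to confine the relevant torsion to degrees $\le r$; the hypothesis supplies it only for $n=r+1$, and the propagation to higher $n$ (using $I^{n}M=JI^{n-1}M$ together with an elimination argument in the spirit of the computation in Theorem \ref{regMulMin}) is a genuine step you do not carry out. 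Third, even granting filter-regularity, the exact equality $\mathrm{reg}\,\mathcal{R}(I,M)=r$ (rather than an inequality) does not follow by "iterating the base case," since the factorization-by-a-regular-element argument breaks once the elements are only filter-regular; one needs either Trung's characterization of regularity in terms of the ends of the colon modules $\bigl((z_1u,\dots,z_{i-1}u)\mathcal{R}(I,M):z_iu\bigr)/(z_1u,\dots,z_{i-1}u)\mathcal{R}(I,M)$, or explicit bookkeeping of how these bounded-degree modules enter the local cohomology. As written, your text is an honest outline of the strategy of \cite{Giral-Planas-Vilanova} rather than a proof.
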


We are now ready for the main technical result of this section, which in particular will lead us to a byproduct on Ulrich modules. Note this theorem also gives a generalization of \cite[Proposition
1.2]{Ooishi}, where the situation $\Ic=\Mc$ was treated; more precisely, the condition "$g_{\Delta}(M)=0$"
in \cite{Ooishi} is equivalent to Puthenpurakal's notion of
minimal multiplicity when $\Ic = \Mc$.

\begin{theorem}\label{regMulMin}
Let $(R, \Mc)$ be a local ring with infinite residue field, $M$ a Cohen-Macaulay $R$-module of dimension $t > 0$ and $I$ an $\Mc$-primary ideal of $R$. Let $J = (z_{1}, \ldots,z_{t})$ be a minimal $M$-reduction of $I$. The following assertions are equivalent:
\begin{itemize}
\item[(i)] 	$M$ has minimal multiplicity with respect to $I$;
\item[(ii)] $\mathrm{reg}\,{\mathcal R}(I, M) = \mathrm{reg}\,{\mathcal G}(I,M) = {\rm r}_{J}(I, M)  \leq 1$;
\item[(iii)] ${\rm r}_{J}(I, M)  \leq 1$.
\end{itemize}
\end{theorem}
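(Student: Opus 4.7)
The plan is to establish (i) $\Leftrightarrow$ (iii) first, observe that (ii) $\Rightarrow$ (iii) is immediate, and concentrate on (iii) $\Rightarrow$ (ii), which contains the substantive regularity computations. For the first equivalence, since $R/\Mc$ is infinite and $J = (z_{1}, \ldots, z_{t})$ is a minimal $M$-reduction of the $\Mc$-primary ideal $I$, Lemma \ref{connection} identifies $z_{1}, \ldots, z_{t}$ as a maximal $M$-superficial sequence of $I$. Lemma \ref{Puthenchar}(iii) then says condition (i) is tantamount to $JIM = I^{2}M$; and ${\rm r}_{J}(I,M) \leq 1$ is by definition the condition $JIM = I^{2}M$ (with the sub-case ${\rm r}_{J}(I,M) = 0$ meaning $JM = IM$, which automatically forces $JIM = I\cdot JM = I^{2}M$), so one obtains (i) $\Leftrightarrow$ (iii).

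For (iii) $\Rightarrow$ (ii), I would first invoke the full version of \cite[Theorem 2.4]{Puthenpurakal}---of which Lemma \ref{Puthenchar} records only four of the equivalent conditions---to conclude that $\mathcal{G}(I,M)$ is Cohen-Macaulay under assumption (i); this is the same bridge already implicit in Corollary \ref{CM-G}. Since $M$ is Cohen-Macaulay of dimension $t$ and $z_{1}, \ldots, z_{t}$ is a system of parameters for $M$, this sequence is $M$-regular. The classical Valabrega-Valla criterion, which characterizes Cohen-Macaulayness of $\mathcal{G}(I,M)$ in terms of precisely the intersections appearing in Lemma \ref{GPV}, then furnishes
$$(z_{1}, \ldots, z_{i}) M \cap I^{n+1} M = (z_{1}, \ldots, z_{i}) I^{n} M$$
for every $1 \leq i \leq t-1$ and all $n \geq 0$. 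Applying Lemma \ref{GPV} (with the case $t = 1$ handled directly, without needing any intersection condition) then yields $\mathrm{reg}\,\mathcal{R}(I,M) = {\rm r}_{J}(I,M) \leq 1$.

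It remains to show $\mathrm{reg}\,\mathcal{G}(I,M) = {\rm r}_{J}(I,M)$. My plan is to use the standard fact that, for a Cohen-Macaulay graded module modded out by a regular sequence of degree-one elements $y_{1}, \ldots, y_{t}$, one has $\mathrm{reg}\,\mathcal{G} = \mathrm{reg}\,(\mathcal{G}/(y_{1}, \ldots, y_{t})\mathcal{G})$, and the regularity of a zero-dimensional graded module equals its top nonzero degree. A direct computation shows that the degree-$n$ piece of $\mathcal{G}(I,M)/(z_{1}^{\ast}, \ldots, z_{t}^{\ast})\mathcal{G}(I,M)$ equals $I^{n}M/(J I^{n-1} M + I^{n+1} M)$, which vanishes for $n \geq 2$ thanks to ${\rm r}_{J}(I,M) \leq 1$ (an easy induction gives $JI^{n-1}M = I^{n}M$ for all $n \geq 2$), equals $IM/JM$ at $n = 1$, and $M/IM$ at $n = 0$; the top nonzero degree is therefore exactly ${\rm r}_{J}(I,M)$. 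The main obstacle is the reliance on classical tools not spelled out in the excerpt (Valabrega-Valla for modules and the invariance of regularity under quotients by degree-one regular sequences); an alternative route would pass between $\mathrm{reg}\,\mathcal{R}(I,M)$ and $\mathrm{reg}\,\mathcal{G}(I,M)$ through the long exact sequence in local cohomology coming from $0 \to \mathcal{R}(I,M)(1) \to \mathcal{R}(I,M) \to \mathcal{G}(I,M) \to 0$, with careful attention to the graded shifts.
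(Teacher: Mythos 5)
Your proposal is correct, but the implication (iii) $\Rightarrow$ (ii) is handled by a genuinely different route than the paper's. Both arguments funnel through Lemma \ref{GPV}, whose hypothesis is the Valabrega--Valla-type condition $(z_1,\ldots,z_i)M\cap I^{r+1}M=(z_1,\ldots,z_i)I^rM$. The paper verifies this condition \emph{directly}, by an explicit element-chasing computation: it writes $f\in(z_1,\ldots,z_i)M\cap I^2M$ using $JIM=I^2M$ and successively eliminates $z_t,z_{t-1},\ldots,z_{i+1}$ by regularity of the sequence modulo $(z_1,\ldots,z_{j})M$; and for the equality $\mathrm{reg}\,\mathcal{R}(I,M)=\mathrm{reg}\,\mathcal{G}(I,M)$ it simply cites \cite[Corollary 3]{Zamani14}, which holds unconditionally. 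You instead import the Cohen--Macaulayness of $\mathcal{G}(I,M)$ from the full statement of \cite[Theorem 2.4]{Puthenpurakal} (legitimate --- the paper itself does this in Corollary \ref{CM-G}) and then invoke the module version of Valabrega--Valla to obtain the intersection conditions for \emph{all} $n$, finishing with an Artinian reduction of $\mathcal{G}(I,M)$ by the initial forms $z_1^*,\ldots,z_t^*$ to read off its regularity as the top nonvanishing degree. Your route is conceptually cleaner and explains \emph{why} the intersection conditions hold, at the cost of leaning on two classical facts not developed in the paper (Valabrega--Valla for modules, and invariance of regularity under quotients by degree-one regular elements, which in turn needs $z_1^*,\ldots,z_t^*$ to be a $\mathcal{G}(I,M)$-regular sequence); the paper's computation is self-contained and only needs the intersection condition at the single level $n=r\le 1$, which is exactly what Lemma \ref{GPV} asks for. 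The remaining pieces --- the identification of $z_1,\ldots,z_t$ as a maximal $M$-superficial sequence via Lemma \ref{connection}, the translation of (i) into $JIM=I^2M$ via Lemma \ref{Puthenchar}, and the regularity of the sequence on $M$ --- agree with the paper up to minor rearrangement.
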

\begin{proof} First, notice that $z_1, \ldots, z_t$ is a (maximal) $M$-superficial sequence of $I$ by Lemma \ref{connection}. As a consequence, being $M$ Cohen-Macaulay and $I$ $\Mc$-primary,  $z_1, \ldots, z_t$ must be in fact an $M$-sequence according to \cite[Lemma 1.2]{Rossi-Valla}. 
Now, the core of the proof is the implication (i) $\Rightarrow$ (ii), so assume first that (i) holds. In general, we have $\mathrm{reg}\,{\mathcal R}(I, M) = \mathrm{reg}\,{\mathcal G}(I,M)$ (see \cite[Corollary 3]{Zamani14}) and so it remains to prove that $\mathrm{reg}\,{\mathcal R}(I, M) =  {\rm r}_{J}(I,M)$, which we shall accomplish by means of Lemma \ref{GPV}.

Moreover, since $z_1, \ldots, z_t$ is maximal $M$-superficial, Lemma \ref{Puthenchar} yields
	$JIM = I^2M$, i.e., ${\rm r}_{J}(I, M) \leq 1$. Now, to simplify notation, set ${\bf z}_i=z_1, \ldots, z_i$ for $i = 1, \ldots, t - 1$ (note we can assume $t>1$ by Lemma \ref{GPV}). Since clearly $({\bf z}_i)M \cap IM = ({\bf z}_i)M$ for all $i = 1, \ldots, t - 1$, the case ${\rm r}_{J}(I,M) = 0$ is trivial by virtue of Lemma \ref{GPV}. Now suppose ${\rm r}_{J}(I,M) = 1$. Again in view of Lemma \ref{GPV}, all we need to prove is that $$({\bf z}_i)M \cap I^2M = ({\bf z}_i)IM \quad \mbox{for \,all} \quad i = 1, \ldots, t - 1.$$
	
First, it is clear that $({\bf z}_i)IM \subset ({\bf z}_i)M \cap I^2M$. To show the other inclusion, take an arbitrary $f \in ({\bf z}_i)M \cap I^2M$. Because $JIM = I^2M$, we have $$f = z_1 m_1 + \cdots + z_i m_i = z_1 a_1 m'_1 + \cdots + z_t a_t m'_t$$ with $m_j, m'_k \in M$ and $a_k \in I$. Hence $$\overline{z_t a_t m'_t} = \overline{0} \in M/({\bf z}_{t-1})M,$$ and since the sequence is regular on $M$, we have $\overline{a_t m'_t} = \overline{0} \in M/({\bf z}_{t-1})M$, that is, $a_t m'_t = z_1 w_{t, 1} +  \cdots + z_{t - 1} w_{t, t - 1}$ with $w_{t, j} \in M$. Therefore, $f$ can be expressed as
	\begin{equation}\label{calc1}
	z_1 m_1 + \cdots + z_i m_i = z_1(a_1 m'_1 + z_t w_{t, 1}) + \cdots + z_{t - 1}(a_{t - 1} m'_{t - 1} + z_t w_{t, t - 1}),
	\end{equation} whose right-hand side shows $f\in ({\bf z}_{t-1})IM$, thus settling the case $i=t-1$.
	Next, for $i<t-1$, we reduce  (\ref{calc1}) modulo $({\bf z}_{t-2})M$ and apply an analogous argument to the term $z_{t - 1}(a_{t - 1} m'_{t - 1} + z_t w_{t, t - 1})$ in order to obtain 
	\begin{equation}\label{calc2}
	a_{t - 1} m'_{t - 1} + z_t w_{t, t - 1} = z_1 w_{t - 1, 1} + \ldots + z_{t - 2} w_{t - 1, t - 2}
	\end{equation} with $w_{t-1, j} \in M$. Thus, by (\ref{calc1}) and (\ref{calc2}), 
	$$f=z_1(a_1 m'_1 + z_t w_{t, 1} + z_{t - 1} w_{t - 1, 1}) + \cdots + z_{t - 2}(a_{t - 2} m'_{t - 2} + z_t w_{t, t - 2} + z_{t - 1} w_{t - 1, t - 2}).$$ Continuing with the argument, we get an equality
$$f  =  z_1(a_1 m'_1 + z_t w_{t, 1} + \cdots + z_{i + 1} w_{i + 1, 1}) + \cdots + z_{i}(a_{i} m'_{i} + z_t w_{t,i} + \cdots + z_{i + 1} w_{i + 1, i}).$$ Since $a_1, \ldots, a_i, z_{i + 1}, \ldots, z_t \in I$, it follows that $f \in ({\bf z}_i)IM$, as needed.
	
	The implication (ii) $\Rightarrow$ (iii) is obvious. Finally, suppose (iii) holds.  Then $JIM = I^2M$, and we have seen that
	$z_1, \ldots, z_t$ is a maximal $M$-superficial sequence. By Lemma \ref{Puthenchar}, we conclude that $M$ has minimal multiplicity with respect to $I$.  \qed
\end{proof}

\medskip

As a consequence of Theorem \ref{regMulMin}, we determine the regularity of blowup modules of $\Ic$ relative to an Ulrich module. Also, taking $\Ic = \Mc$ the result retrieves part of \cite[Proposition 1.1]{Ooishi}. 

\begin{corollary}\label{corUlr}
	Let $(R, \Mc)$ be a Cohen-Macaulay local ring with infinite residue field and positive dimension, and let $Q$ be as in Convention \ref{convention}. If $M$ is an Ulrich $R$-module with respect to $\Ic$, then
	$$\mathrm{reg}\,{\mathcal R}(\Ic, M) = \mathrm{reg}\,{\mathcal G}(\Ic,M) = {\rm r}_{Q}(\Ic, M)  = 0.$$ The converse holds in case  $M$ is maximal Cohen-Macaulay and $M/\Ic M$ is $R/ \Ic$-free.
\end{corollary}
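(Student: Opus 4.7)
The plan is to assemble Corollary \ref{corUlr} from the ingredients already at our disposal: Proposition \ref{UlrichMultMin}, Theorem \ref{regMulMin}, and the definition of an Ulrich module. The conceptual bridge between them is the simple observation that, directly from the definition of reduction number, ${\rm r}_{Q}(\Ic, M) = 0$ is equivalent to $QM = \Ic M$, which is exactly condition (ii) of Definition \ref{def.I.urich.module}.

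For the forward direction, I would argue as follows. Assume $M$ is Ulrich with respect to $\Ic$. Then $\Ic M = QM$ by definition, which immediately gives ${\rm r}_{Q}(\Ic, M) = 0$. Proposition \ref{UlrichMultMin} shows that $M$ has minimal multiplicity with respect to $\Ic$; moreover, its proof establishes that $Q$ is a minimal $M$-reduction of $\Ic$, with generators $x_1, \ldots, x_d$ forming a maximal $M$-superficial sequence. With these facts in hand, all hypotheses of Theorem \ref{regMulMin} are met with $I=\Ic$, $J=Q$, and $t=d$, and applying that theorem yields
\[
\mathrm{reg}\,{\mathcal R}(\Ic, M) \,=\, \mathrm{reg}\,{\mathcal G}(\Ic,M) \,=\, {\rm r}_{Q}(\Ic, M).
\]
Combining this equality with ${\rm r}_{Q}(\Ic, M) = 0$ completes the forward direction.

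For the converse, suppose $M$ is maximal Cohen-Macaulay, $M/\Ic M$ is $R/\Ic$-free, and ${\rm r}_{Q}(\Ic, M) = 0$. The last condition says $QM = \Ic M$, and combined with the other two assumptions it exhausts all three clauses of Definition \ref{def.I.urich.module}, so $M$ is Ulrich with respect to $\Ic$. Note that the converse requires only the vanishing of ${\rm r}_{Q}(\Ic, M)$, which is the weakest of the three equal quantities in the displayed equation.

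I do not foresee a significant obstacle, since the substantive work has already been done in Theorem \ref{regMulMin} and Proposition \ref{UlrichMultMin}. The only point demanding care is verifying that the hypotheses of Theorem \ref{regMulMin} are genuinely available; this hinges on the minimality of $Q$ as an $M$-reduction of $\Ic$ and on the superficial-sequence property, both of which are supplied by (the proof of) Proposition \ref{UlrichMultMin} via Lemma \ref{connection}.
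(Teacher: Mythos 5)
Your proposal is correct and follows essentially the same route as the paper: deduce ${\rm r}_{Q}(\Ic, M)=0$ directly from $\Ic M = QM$, invoke Proposition \ref{UlrichMultMin} (and its proof, for the minimality of $Q$ as an $M$-reduction) to apply Theorem \ref{regMulMin}, and observe that the converse follows immediately from Definition \ref{def.I.urich.module}. No issues.
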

\begin{proof} First, notice that  $Q$ is an $M$-reduction of $\Ic$, so the number ${\rm r}_{Q}(\Ic, M)$ makes sense. Now, because $M$ is Ulrich with respect to $\Ic$, we have $QM = \Ic M$, which means ${\rm r}_{Q}(\Ic, M)  = 0$. On the other hand, Proposition \ref{UlrichMultMin} and its proof ensure that $M$ has minimal multiplicity with respect to $\Ic$ and that $Q$ is in fact a minimal $M$-reduction of $\Ic$, and so we can apply Theorem \ref{regMulMin} to obtain $\mathrm{reg}\,{\mathcal R}(\Ic, M) = \mathrm{reg}\,{\mathcal G}(\Ic,M) = {\rm r}_Q(\Ic, M)$. The converse is clear. \qed
\end{proof}

\begin{corollary}\label{corUlr2}
	Let $(R, \Mc)$ be a Cohen-Macaulay local ring with infinite residue field and positive dimension. Suppose $\Ic$ is an Ulrich ideal of $R$ but not a parameter ideal. Then,
	$\mathrm{reg}\,{\mathcal R}(\Ic, \Omega^k \Ic) = 0$ for all $k\geq d-1$. If in addition $R$ is Gorenstein, then $$\mathrm{reg}\,{\mathcal R}(\Ic, \lambda (\Omega^k \Ic)) = 0 \quad \mbox{for \,all} \quad k\geq d.$$
\end{corollary}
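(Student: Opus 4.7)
The plan is to reduce the corollary to Corollary~\ref{corUlr}, since the claim is a regularity statement for Rees modules associated with an Ulrich module, and the modules in question have already been identified as Ulrich with respect to $\Ic$ in the previous two sections. So the entire argument will consist of pulling together those Ulrich recognitions and feeding them into Corollary~\ref{corUlr}.

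First, for the initial assertion, I would invoke Remark~\ref{obs.I.urich.module}(iii): since $\Ic$ is Ulrich but not a parameter ideal, the syzygy $\Omega^{i}(R/\Ic)$ is an Ulrich $R$-module with respect to $\Ic$ for every $i \geq d$. Because $\Omega^{1}(R/\Ic) = \Ic$ (the start of a minimal free resolution of $R/\Ic$), this is the same as saying $\Omega^{k}\Ic = \Omega^{k+1}(R/\Ic)$ is Ulrich with respect to $\Ic$ for all $k \geq d-1$. Applying Corollary~\ref{corUlr} to $M = \Omega^{k}\Ic$ then yields $\mathrm{reg}\,{\mathcal R}(\Ic, \Omega^{k}\Ic) = 0$ for every such $k$.

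For the Gorenstein part, the module $\lambda(\Omega^{k}\Ic)$ has already been shown to be Ulrich with respect to $\Ic$ for all $k \geq d$ in Corollary~\ref{lambda-syz} (whose proof used Lemma~\ref{stable} to guarantee that $\Omega^{k}\Ic$ is stable, permitting horizontal linkage via Corollary~\ref{prop.I.Ulrich.preservada.por.LH}). One more application of Corollary~\ref{corUlr}, now with $M = \lambda(\Omega^{k}\Ic)$, delivers $\mathrm{reg}\,{\mathcal R}(\Ic, \lambda(\Omega^{k}\Ic)) = 0$ for all $k \geq d$.

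There is no real obstacle here; the entire content of this corollary is bookkeeping of the hypotheses needed to invoke Corollary~\ref{corUlr}. The only point meriting a word of care is the range of indices: one must make sure to use $k \geq d-1$ (and not $k \geq d$) in the first assertion, which comes from the identification $\Omega^{k}\Ic = \Omega^{k+1}(R/\Ic)$ shifting the index by one relative to the statement of Remark~\ref{obs.I.urich.module}(iii). For the linked modules $\lambda(\Omega^{k}\Ic)$, however, one needs $\Omega^{k}\Ic$ itself to be stable (to apply Corollary~\ref{prop.I.Ulrich.preservada.por.LH}), which via Lemma~\ref{stable} forces the stricter bound $k \geq d$.
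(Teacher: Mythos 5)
Your proposal matches the paper's own proof: both parts are obtained by recognizing $\Omega^{k}\Ic=\Omega^{k+1}(R/\Ic)$ as Ulrich with respect to $\Ic$ for $k\geq d-1$ (via Remark \ref{obs.I.urich.module}(iii)) and $\lambda(\Omega^{k}\Ic)$ as Ulrich for $k\geq d$ (via Corollary \ref{lambda-syz}), then applying Corollary \ref{corUlr}. The index bookkeeping you flag is handled identically in the paper.
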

\begin{proof} As we know, the $R$-module $\Omega^{k+1}(R/\Ic)=\Omega^{k}\Ic$ is Ulrich with respect to $\Ic$ for all $k\geq d-1$. Thus the first part follows from Corollary \ref{corUlr}. If $R$ is Gorenstein then by Corollary \ref{lambda-syz} the $R$-module $\lambda (\Omega^k \Ic)$ is Ulrich with respect to $\Ic$ for all $k\geq d$. Now we again apply Corollary \ref{corUlr}. \qed
\end{proof}

\begin{corollary}\label{reg-irrel}
	Let $(R, \Mc)$ be a 1-dimensional Cohen-Macaulay local ring with infinite residue field. If $\Ic$ is an Ulrich ideal, then $$\mathrm{reg}\,{\mathcal R}(\Ic)_+  = 0.$$
\end{corollary}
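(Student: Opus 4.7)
The plan is to derive Corollary~\ref{reg-irrel} as a direct specialization of Corollary~\ref{corUlr} to the module $M = \Ic$. In dimension $d=1$, Remark~\ref{obs.I.urich.module}(ii) says that an Ulrich ideal $\Ic$ is already an Ulrich $R$-module with respect to $\Ic$, so the hypotheses of Corollary~\ref{corUlr} are satisfied. Applying that corollary immediately yields
$$\mathrm{reg}\,{\mathcal R}(\Ic,\Ic) \, = \, 0.$$

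To convert this into the stated assertion, I would identify the Rees module ${\mathcal R}(\Ic,\Ic) = \bigoplus_{n\geq 0}\Ic^{n+1}$ with the irrelevant part ${\mathcal R}(\Ic)_+ = \bigoplus_{n\geq 1}\Ic^n$ as graded ${\mathcal R}(\Ic)$-modules, via the natural reindexing $m = n+1$. Under the convention in force throughout the section, this identification preserves the Castelnuovo--Mumford regularity (which is defined intrinsically via local cohomology of the underlying graded module), so the vanishing $\mathrm{reg}\,{\mathcal R}(\Ic,\Ic) = 0$ transfers to $\mathrm{reg}\,{\mathcal R}(\Ic)_+ = 0$.

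The only point that genuinely calls for care is the degree bookkeeping in matching ${\mathcal R}(\Ic,\Ic)$ and ${\mathcal R}(\Ic)_+$, since a priori they sit with offset gradings. I expect this to be the main (if minor) obstacle: one must verify, using that $\Ic^2 = Q\Ic$ for the principal reduction $Q = (x)$ and hence $\Ic^{n+1} = x^n\Ic$ for all $n \geq 0$, that the two graded modules carry the same local cohomology data relevant to $\mathrm{reg}$. Once this is settled consistently with the paper's conventions, the corollary is an immediate consequence of Corollary~\ref{corUlr} via Remark~\ref{obs.I.urich.module}(ii).
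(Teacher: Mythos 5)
Your proposal is correct and follows exactly the paper's own argument: the paper likewise invokes Remark~\ref{obs.I.urich.module}(ii) to see that $\Ic$ is Ulrich with respect to $\Ic$, applies Corollary~\ref{corUlr} with $M=\Ic$ to get $\mathrm{reg}\,{\mathcal R}(\Ic,\Ic)=0$, and then identifies ${\mathcal R}(\Ic,\Ic)=\bigoplus_{i\geq 0}\Ic^{i+1}$ with ${\mathcal R}(\Ic)_+$. The degree bookkeeping you flag is the only delicate point, and the paper simply asserts this identification outright, so your treatment is if anything slightly more careful than the original.
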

\begin{proof} Using Remark \ref{obs.I.urich.module}(ii) and Corollary \ref{corUlr}, we obtain $\mathrm{reg}\,{\mathcal R}(\Ic, \Ic)  = 0$.
On the other hand, we clearly have ${\mathcal R}(\Ic, \Ic)=\bigoplus_{i\geq 0}{\Ic}^{i+1}={\mathcal R}(\Ic)_+$. \qed
\end{proof}		

\begin{example}\rm Consider the local ring $R=K[[x, y]]/(x^2+y^4)$, where $K$ is an infinite field. The ideal $\Ic = (x, y^2)R$ is Ulrich (this is the case $d=1$ and $s=2$ of Example \ref{exemplo}(ii)). Then, Corollary \ref{reg-irrel} gives $\mathrm{reg}\,{\mathcal R}(\Ic)_+  = 0$. To write this graded ideal explicitly, we can use (degree 1) variables $T, U$ over $R$ in order to determine a presentation of the Rees algebra
$${\mathcal R}(\Ic) = R[T, U]/\Kc, \quad \Kc =(xT+y^2U, y^2T-xU, T^2 + U^2), \quad {\mathcal R}(\Ic)_0=R,$$ so that ${\mathcal R}(\Ic)_+=(T, U)R[T, U]/\Kc$.

Now let us use the same example to illustrate the determination of the Hilbert-Samuel polynomial $P_{\Ic}^{\Ic}(k)$. Notice that $\ell_R(R/\Ic)={\rm dim}_K(K[[y]]/(y^2))=2$ and $\nu(\Ic)=2$. By Remark \ref{Hilb-S}(i), we have $P_{\Ic}^{\Ic}(k)=\nu(\Ic)\ell_R(R/\Ic)k=4k$, i.e.,
$$\ell_R(\Ic/{\Ic}^{k+1})=4k \quad \mbox{for \,all} \quad k\gg 0.$$
\end{example}

\section{A detailed example}\label{detailed}

In this last section, we fix formal indeterminates $x, y, z$ over an infinite field $K$ as well as the $2$-dimensional local hypersurface ring $R=K[[x, y, z]]/(x^2+y^2+z^4)$. The ideal $$\Ic = (x, y, z^2)R$$ is Ulrich (this is the case $d=s=2$ of Example \ref{exemplo}(ii)) and not a parameter ideal. Our goal here is to find (explicit) Ulrich $R$-modules with respect to $\Ic$ and study their multiplicities, Chern numbers, and the regularity of the associated blowup modules.

First, $\Ic$ has an infinite (in fact, periodic) minimal $R$-free resolution 
\begin{equation}\label{resol}\cdots \longrightarrow R^4\stackrel{\Phi}{\longrightarrow} R^4\stackrel{\Phi}{\longrightarrow} R^4\stackrel{\Phi}{\longrightarrow} R^4\stackrel{\Psi}{\longrightarrow} R^3\longrightarrow \Ic \longrightarrow 0,\end{equation} where $$\Phi = \left(\begin{array}{cccc}
-z^2  & 0 & -y & x\\
0  & -z^2 & x & y\\
-y  & x & z^2 & 0\\
x & y & 0 & z^2
\end{array}\right), \quad \Psi =\left(\begin{array}{cccc}
-z^2  & 0 & -y & x\\
0  & -z^2 & x & y\\
x  & y & 0 & z^2
\end{array}\right).$$ 

\smallskip

In what follows, as a matter of standard notation, whenever $\varphi$ is a $p\times q$ matrix with entries in $R$, we let ${\rm Im}\,\varphi$ denote the $R$-submodule of $R^p$ generated by the column-vectors of $\varphi$. 
Below we observe a couple of facts.

\smallskip

\begin{itemize}
\item We claim that the $R$-submodules ${\rm Im}\,\Phi \subset R^4$ and ${\rm Im}\,\Psi \subset R^3$ are Ulrich with respect to $\Ic$.
To see this, using Remark \ref{obs.I.urich.module}(iii) we get that $\Omega^{k}\Ic$ is Ulrich with respect to $\Ic$ whenever $k\geq 1$. But in the present case, by (\ref{resol}), these modules are
$$\Omega \Ic = {\rm Im}\,\Psi, \quad \Omega^{k} \Ic = {\rm Im}\,\Phi \quad \mbox{for \,all} \quad k\geq 2,$$ thus showing the claim. Also notice (by the symmetry of $\Phi$) that
$\lambda (\Omega^k \Ic) = \lambda ({\rm Im}\,\Phi)= {\rm Im}\,\Phi^* = {\rm Im}\,\Phi$ for all $k\geq 2$. In particular, ${\rm Im}\,\Phi$ is horizontally linked.

\medskip

\item Let us compute multiplicities and Chern numbers. First, since ${\rm Im}\,\Psi$ is Ulrich with respect to $\Ic$, we must have
${\rm Im}\,\Psi/\Ic {\rm Im}\,\Psi \cong (R/\Ic)^{\nu({\rm Im}\,\Psi)}$. Note $\ell_R(R/\Ic)={\rm dim}_K(K[[z]]/(z^2))=2$. Thus, by Remark \ref{obs.I.urich.module}(i), $${\rm e}^0_{\Ic}({\rm Im}\,\Psi)=\ell_R({\rm Im}\,\Psi/\Ic {\rm Im}\,\Psi)=\nu({\rm Im}\,\Psi)\ell_R(R/\Ic)=4\cdot 2=8.$$ Since $\nu({\rm Im}\,\Phi)=4$ as well, we have ${\rm e}^0_{\Ic}({\rm Im}\,\Phi)=8$. As to the Chern numbers, Corollary \ref{Chern-of-syz} gives ${\rm e}^{1}_{\Ic}(\Omega^{k}\Ic)  = 0$ for all $k\geq 1$. Hence, $${\rm e}^{1}_{\Ic}({\rm Im}\,\Psi)  = {\rm e}^{1}_{\Ic}({\rm Im}\,\Phi) = 0.$$

\medskip

\item For the Castelnuovo-Mumford regularity of blowup modules, Corollary \ref{corUlr2} yields $\mathrm{reg}\,{\mathcal R}(\Ic, \Omega^k \Ic) = 0$ for all $k\geq 1$, and therefore
$$\mathrm{reg}\,{\mathcal R}(\Ic, {\rm Im}\,\Psi) = \mathrm{reg}\,{\mathcal R}(\Ic, {\rm Im}\,\Phi) = 0.$$ Finally, the associated graded ${\mathcal G}(\Ic)$-modules ${\mathcal G}(\Ic, {\rm Im}\,\Psi)$ and ${\mathcal G}(\Ic, {\rm Im}\,\Phi)$ have regularity zero as well (see Corollary \ref{corUlr}), and notice they are Cohen-Macaulay by Corollary \ref{CM-G}.
\end{itemize}

\bigskip

\noindent{\bf Acknowledgments.}  Miranda-Neto (corresponding author) was partially supported by the CNPq-Brazil grants 301029/2019-9 and 406377/2021-9. Queiroz was supported by a CAPES Doctoral Scholarship. The authors are gratefully indebted to the anonymous referee for her/his careful inspection of the paper, including corrections, questions, and interesting suggestions which substantially improved the paper. Last but not least, the authors are also grateful to Shiro Goto (who sadly passed away on July 26, 2022, and to whom this article is dedicated) and to Naoki Endo for some correspondence following the first version of the manuscript.

\end{document}